\documentclass[12pt, reqno]{amsart}
\usepackage{amsmath,amssymb,amsthm,amsxtra, setspace}
\usepackage{amssymb}
\usepackage{mathrsfs}
\usepackage{alltt}
\usepackage{graphicx,type1cm,xcolor}
\usepackage{mathtools, accents}
\usepackage{hyperref}
\usepackage{alltt}
\usepackage{fouridx}
\usepackage{dsfont}
\usepackage{cancel}
\usepackage[margin=1in]{geometry}
\usepackage{shadethm}

\allowdisplaybreaks

\newcommand{\eps}{\varepsilon}
\newcommand{\curl}{\mathrm{curl}}
\newcommand{\supp}{\mathrm{supp}}
\numberwithin{equation}{section}

\newcommand{\what}{\widehat}
\newcommand{\wtilde}{\widetilde}
\newshadetheorem{Theorem}{Theorem}
\newshadetheorem{Proposition}{Proposition}
\newshadetheorem{Lemma}{Lemma}
\newshadetheorem{Assumption}{Assumption}
\newshadetheorem{Corollary}{Corollary}
\newshadetheorem{Definition}{Definition}
\newshadetheorem{Example}{Example}
\newshadetheorem{Remark}{Remark}
\newshadetheorem{Note}{Note}
\newshadetheorem{Hypothesis}{Hypothesis}
\newshadetheorem{Explanation}{Explanation}
\newtheorem{theorem}{Theorem}[section]
\newtheorem{lemma}[theorem]{Lemma}
\newtheorem{proposition}[theorem]{Proposition}

\newtheorem{corollary}[theorem]{Corollary}
\newtheorem{definition}[theorem]{Definition}

\newtheorem{remark}[theorem]{Remark}

\newcommand{\Tr}{\mathop{\mathrm{Tr}}}

\newcommand{\matr}{M}
\newcommand{\K}{\mathcal{K}}

\def\T{\mathbb{T}}
\def\mL{\mathrm{L}}
\def\Lb{\mathbf{L}}

\def\X{\mathrm{X}}

\def\F{\mathcal{F}}

\def\C{\mathbf{C}}

\def\0{\boldsymbol{0}}

\def\bu{\boldsymbol{u}}

\def\vb{\boldsymbol{v}}
\def\wb{\boldsymbol{w}}

\def\N{\mathbb{N}}

\def\M{\mathcal{M}}

\def\U{\mathcal{U}}
\def\bz{\boldsymbol{z}}

\def\Pr{\mathbb{P}}

\newcommand{\sphere}{\mathbb{S}}

\def\Eta{\boldsymbol{\eta}}
\def\bvarphi{\boldsymbol{\varphi}}
\def\bO{\mathcal{O}}

\def\bb{\boldsymbol{b}}
\def\Cb{\mathrm{C}}
\def\Nn{\mathcal{N}}
\def\matr{\mathbb{M}}
\def\ab{\boldsymbol{a}}

\newcommand{\abs}[1]{\left\vert#1\right\vert}

\newcommand{\R}{\mathbb{R}}

\newcommand{\loc}{\mathop{\mathrm{loc}}}
\newcommand{\di}{\mathrm{div}}

\renewcommand{\d}{\/\mathrm{d}\/}

\newcommand{\omegaa}{\boldsymbol{\omega}}
\newcommand{\rb}{\mathrm{b}}

\let\originalleft\left
\let\originalright\right
\renewcommand{\left}{\mathopen{}\mathclose\bgroup\originalleft}
\renewcommand{\right}{\aftergroup\egroup\originalright}

\newcommand{\ip}[2]{\fourIdx{}{0}{}{\!x}{\mathcal{ X}}}
\newcommand\coma[1]{{\color{red}#1}}
\newcommand\adda[1]{{\color{blue}#1}}

\newcommand\dela[1]{}

\hypersetup{colorlinks=true,%
	citecolor=blue,%
	filecolor=blue,%
	linkcolor=purple,%
}
\usepackage{graphicx}

\usepackage{orcidlink}

\usepackage{ragged2e}
\justifying

\usepackage{todonotes}

\usepackage{xpatch}
\makeatletter   
\xpatchcmd{\@tocline}
{\hfil\hbox to\@pnumwidth{\@tocpagenum{#7}}\par}
{\ifnum#1<0\hfill\else\dotfill\fi\hbox to\@pnumwidth{\@tocpagenum{#7}}\par}
{}{}
\makeatother    

\makeatletter
\def\l@subsection{\@tocline{2}{0pt}{4pc}{6pc}{}}
\def\l@subsubsection{\@tocline{3}{0pt}{8pc}{8pc}{}}
\makeatother

\makeatletter
\def\l@section{\@tocline{1}{12pt}{0pt}{}{\bfseries}}
\makeatother

\usepackage[utf8]{inputenc}
\usepackage[T1]{fontenc}
\mathtoolsset{showonlyrefs}
\usepackage{nomencl}
\makenomenclature
\usepackage{amssymb}

\usepackage{enumitem}

\title[Non-uniqueness for the stochastic Euler equations with a passive tracer]{Non-uniqueness for the stochastic incompressible Euler equations with a passive tracer}

\begin{document}
\vspace*{-4mm}
\maketitle
\begin{center}
	\author{Ashish Bawalia\footnote[1]{Department of Mathematics, Indian Institute of Technology Roorkee-IIT Roorkee, Haridwar Highway, Roorkee, Uttarakhand 247667, India.}\orcidlink{0009-0002-9141-2766}, Zdzis\l{}aw Brze\'zniak\footnotemark[2]$^\ast$\orcidlink{0000-0001-8731-6523} and Manil T. Mohan\footnotemark[1]\orcidlink{0000-0003-3197-1136}}
	\footnotetext[2]{Department of Mathematics, University of York, Heslington, YO10 5DD, York, United Kingdom. \\
		\textit{e-mail:} Ashish Bawalia: \email{ashish1@ma.iitr.ac.in; ashish1441chd@gmail.com}.\\
		\textit{e-mail:} Zdzis\l{}aw Brze\'zniak: \email{zdzislaw.brzezniak@york.ac.uk}.\\
		\textit{e-mail:} Manil T. Mohan: \email{maniltmohan@ma.iitr.ac.in; maniltmohan@gmail.com}.\\
		$\hspace{2mm} ^\ast$Corresponding author.\\
		\textit{Key words}: Stochastic Euler equations $\cdot$ Non-uniqueness $\cdot$ Random noise $\cdot$ Baire category method $\cdot$ Convex integration\\
		\textit{MSC}: 
		35R25,
		35R60,
		35Q35,
		35D30, 
		76W05
	}
\end{center}
\begin{abstract}
	In this work we investigate the phenomenon of pathwise non-uniqueness for the stochastic incompressible Euler equations with a passive tracer on the whole Euclidean space. The stochastic perturbations are interpreted as a transport noise and a linear multiplicative noise in the Stratonovich sense. In both cases, via classical transformations, we convert the SPDEs into PDEs with random coefficients. Using the Baire category method developed by De Lellis and Sz\'ekelyhidi Jr., we then construct infinitely many global-in-time weak solutions to the random PDEs in any spatial dimension greater than or equal to two. By applying the inverse transformations, we obtain pathwise non-uniqueness for the original SPDEs. Finally, we present an application of our result to the three-dimensional stochastic ideal MHD equations. This study can be regarded as a stochastic counterpart of Bronzi et al.~[\textit{Commun.~Math.~Sci.},~2015]. In particular, our non-uniqueness result in the random setting extends theirs from a constant energy profile equal to one to arbitrary positive, bounded, and continuous time-dependent profiles, originally established in two dimensions via the convex integration technique.
\end{abstract}
\vspace*{-2mm}
\tableofcontents


\section{Introduction}

The famous partial differential equations (PDEs) that describe the flow of an incompressible fluid were introduced by Euler in \cite{LE-57} and are known as the Euler equations, given by:
\begin{equation*}
	\left\{
	\begin{aligned}
		& \frac{\partial \bu(t)}{\partial t} + (\bu(t)\cdot \nabla) \bu(t) + \nabla \pi(t) = \0, \\
		& \mathrm{div \, } \bu(t) = 0.
	\end{aligned}
	\right.
\end{equation*}
For the last two centuries, the problem of well-posedness of this PDEs has been a challenging task and has attracted significant interest among mathematicians. The method of convex integration, introduced by Nash and Kuiper \cite{JN-54, NHK-55} for isometric embedding problems, later motivated De Lellis and Sz\'ekelyhidi \cite{CDL+LSJ-09} to apply it to the Euler equations. This eventually led to a novel line of research on the ill-posedness of fluid dynamics models, particularly the non-uniqueness of solutions, see Subsection \ref{Subsec-Pre} for a detailed literature review. Continuing in this spirit, we now turn to the model of interest, namely, \textit{the stochastic incompressible Euler system with a passive tracer perturbed by a transport and a multiplicative Gaussian white noise}.

Let  $(\Omega, \F, \{\F_t\}_{t\geq0}, \Pr)$ be a filtered probability space satisfying the usual conditions. We consider the following stochastic PDEs (SPDE), on $[0,\infty)\times \R^n$ ($n\ge 2$):
\begin{subequations}
	\begin{align}
		& \mathrm{d}\bu(t) + \mathrm{div \, }(\bu(t)\otimes \bu(t))\,\mathrm{d} t + \nabla \pi(t) \,\mathrm{d} t = - \Nn(\bu(t))\circ \d B(t), \label{DDEELMSN1}\\
		& \mathrm{d}\rb(t) + \mathrm{div \, } ( \rb(t)  \bu(t) )\,\mathrm{d} t = 0,\label{DDEELMSN2}\\
		& \mathrm{div \, } \bu(t) = 0,\label{DDEELMSN3} \\
		& \bu(0) = \0,\ \rb(0) = 0,\label{DDEELMSN4}
	\end{align}
\end{subequations}
where $\bu(t) = \bu(t,x) \in \R^n$ is the velocity field, $\pi(t)= \pi(t,x) \in \R$ is the pressure,
\begin{equation}\label{eqn-noise}
		\Nn(\bu)\circ \d B(t) := \left\{
		\begin{aligned}
			\nabla \bu\circ \d B(t) & = \sum_{j=1}^{n}\bigg( \frac{\partial u_1}{\partial x_j}, \dots,  \frac{\partial u_n}{\partial x_j}\bigg) \circ dB_j(t),\ \  \text{ transport},\\
			\gamma \bu\circ \d B(t) &= (\gamma u_1, \dots, \gamma u_n)\circ \d B(t),\ \ \text{ linear multiplicative},
		\end{aligned}
		\right.
\end{equation}
 $\gamma>0$ is a constant, $\circ$ means that the stochastic integral is understood in the sense of Stratonovich,  
\begin{equation}\label{eqn-def-B}
		B \; \left\{
		\begin{aligned}
			&\text{is a $\R^n$-valued Brownian motion in  the case of transport noise},\\
			&\text{is a $\R$-valued Brownian motion in the case of linear multiplicative noise}.
		\end{aligned}
		\right.
\end{equation}
and $\rb(t) = \rb(t,x)\in \R$ is the tracer.

The main aim of this article is to establish the existence of infinitely many bounded weak solutions (in the analytic sense defined below) to the problem \eqref{DDEELMSN1}--\eqref{DDEELMSN4}. 

\begin{remark}
	We conjecture that, for zero initial datum, any fluid dynamics model possessing infinitely many weak solutions retains this property when coupled with a transport equation and subjected to multiplicative and transport noise (acting on the velocity field). In this article, we verify this conjecture for the incompressible Euler equations in dimensions $n\ge2$.
\end{remark}

To this end, we first provide two definitions of weak solutions for both cases in It\^o form (using the Stratonovich–It\^o conversion).

\begin{definition}[Transport noise]\label{Weak-soln-T}
	We say that a pair of processes $$(\rb, \bu)\in \mathrm{C}_{w}([0,\infty); \mL^2(\R^n; \R\times \R^n))\cap\mL^\infty_{\mathrm{loc}}( [0,\infty) \times \R^n ; \R\times \R^n),\ \ \Pr\text{-a.s.}$$ is an analytically weak solution of the problem \eqref{DDEELMSN1}--\eqref{DDEELMSN4} 
	if and only if 
	\begin{itemize}
		\item the processes
		\begin{align*}
			t \mapsto \int_{\R^n} \rb(t) \varphi \,\mathrm{d} x\ \text{ and } \ t \mapsto \int_{\R^n} \bu(t)\cdot \bvarphi \,\mathrm{d} x,
		\end{align*}
		are continuous $\{\F_{t}\}_{t\geq 0}$-adapted for every function $\varphi \in \Cb^{1}(\R^n; \R)$ and vector field $\bvarphi \in \Cb^{2}(\R^n ; \R^n)$; 
		\item for any $t \geq 0$, $\Pr$-a.s. the following conditions hold:
		\begin{align}
			\int_{\R^n} \rb(t) \varphi \,\mathrm{d} x 
			& = \int_{0}^{t}\int_{\R^n}(\rb(s) \bu(s))\cdot \nabla\varphi \,\mathrm{d} x\,\mathrm{d} s,\\
			\int_{\R^n} \bu(t)\cdot\bvarphi \,\mathrm{d} x 
			& = \int_{0}^{t}\int_{\R^n} ((\bu(s) \otimes \bu(s)) : \nabla \bvarphi + \pi(s) \,\mathrm{div \,} \bvarphi)\,\mathrm{d} x\,\mathrm{d} s\\
			& \quad +  \int_{0}^{t} \int_{\R^n} \bu(s) \cdot \Delta \bvarphi \,\mathrm{d} x \,\mathrm{d} s  + \int_{0}^{t} \int_{\R^n}  (\nabla \varphi)^\top u(s) \,\mathrm{d} x \cdot \,\mathrm{d} B(s) \dela{\sum_{j=1}^{n} \int_{0}^{t} \int_{\R^n} u_i(s) \frac{\partial \varphi_i}{\partial x_j}  \,\mathrm{d} x \,\mathrm{d} B_j(s)}, \label{eqn-weak-form-T}
		\end{align}
	\end{itemize}
where $B$ denotes the $n$-dimensional Brownian motion, see \eqref{eqn-def-B}. \dela{the $u_i$ and $\varphi_i$ are the $i^{th}$-component of the vectors $\bu$ and $\bvarphi$.}
\end{definition}

\begin{definition}[Linear multiplicative noise]\label{Weak-soln}
	We say that a pair of processes $$(\rb, \bu)\in \mathrm{C}_{w}([0,\infty); \mL^2(\R^n; \R\times \R^n))\cap\mL^\infty_{\mathrm{loc}}( [0,\infty) \times \R^n; \R\times \R^n),\ \ \Pr\text{-a.s.}$$ is an analytically weak solution of the problem \eqref{DDEELMSN1}--\eqref{DDEELMSN4} 
	if and only if 
	\begin{itemize}
		\item the processes
		\begin{align*}
			t \mapsto \int_{\R^n} \rb(t) \varphi \,\mathrm{d} x\ \text{ and } \ t \mapsto \int_{\R^n} \bu(t)\cdot \bvarphi \,\mathrm{d} x,
		\end{align*}
		are continuous $\{\F_{t}\}_{t\geq 0}$-adapted for every function $\varphi \in \Cb^{1}(\R^n; \R)$ and vector field $\bvarphi \in \Cb^{1}(\R^n ; \R^n)$; 
		\item for any $t \geq 0$, $\Pr$-a.s. the following conditions hold:
		\begin{align}
			\int_{\R^n} \rb(t) \varphi \,\mathrm{d} x 
			& = \int_{0}^{t}\int_{\R^n}(\rb(s) \bu(s))\cdot \nabla\varphi \,\mathrm{d} x\,\mathrm{d} s,\\
			\int_{\R^n} \bu(t)\cdot\bvarphi \,\mathrm{d} x 
			& = \int_{0}^{t}\int_{\R^n} ((\bu(s) \otimes \bu(s)) : \nabla \bvarphi + \pi(s) \,\mathrm{div \,} \bvarphi)\,\mathrm{d} x\,\mathrm{d} s\\
			& \quad  + \frac{\gamma^2}{2} \int_{0}^{t} \int_{\R^n} \bu(s) \cdot \bvarphi \,\mathrm{d} x \,\mathrm{d} s  - \gamma \int_{0}^{t} \int_{\R^n} \bu(s)\cdot \bvarphi \,\mathrm{d} x \,\mathrm{d} B(s). \label{eqn-weak-form}
		\end{align}
	\end{itemize}
\end{definition}

\begin{remark}
	Observe that the solution introduced above is weak in the PDE sense, where partial derivatives are interpreted in the weak (or distributional) sense. However, they are strong in the stochastic sense, as the stochastic integral is considered on the original probability space.
\end{remark}


\subsection{Main results} Now we state the main results of this manuscript, which will be proved in the remainder of the article.

\begin{theorem}[{Transport and linear multiplicative noise}]\label{Main-result-Stochastic-PDE}
	Let $0<T<\infty$ be fixed. Let $\bO:= (0,T)\times \bO_x \subset [0,\infty)\times \R^n$ be a bounded domain and $h\in C([0,T])$ be a positive function bounded away from $0$. Then, there exists infinitely many weak solutions $(\rb, \bu)$ of the problem \eqref{DDEELMSN1}--\eqref{DDEELMSN4}, in the sense of Definitions \ref{Weak-soln-T} and \ref{Weak-soln}, such that $\Pr$-a.s.
	\begin{itemize}
		\item[(i)] for a.e. $(t,x) \in \bO$
		$$\abs{\bu(t,x)} = \left\{
		\begin{aligned}
			h(t),\ &\  \text{ transport case},\\
			\wtilde{h}(t),\ &\ \text{ linear multiplicative case},
		\end{aligned}
		\right.\ \text{ and }\ \; \abs{\rb(t,x)} =1,$$
		\item[(ii)] for a.e. $(t,x) \in ([0,\infty)\times \R^n) \backslash \bO$
		\begin{equation*}
			\bu(t,x) =0,\ \rb(t,x) =0\ \text{ and }\ \pi(t,x) =0,
		\end{equation*}
	\end{itemize}
	where $\wtilde{h}(t) = \theta^\prime(t) h(t)$ with $\displaystyle \theta(t)=\int_0^te^{-\gamma B(s)}\,\mathrm{d}s$.
\end{theorem}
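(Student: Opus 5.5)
The plan is to treat the two noises separately. In each case I remove the noise by an explicit pathwise transformation, which turns the SPDE into a random PDE. I then produce infinitely many solutions of that random PDE by the Baire category method of De Lellis and Sz\'ekelyhidi, one realization $\omega$ at a time, and finally undo the transformation.

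\emph{Transport noise.} Since the noise coefficient $\nabla\bu$ does not depend on $x$ and $B$ is a spatial translation, I set
$$\bu(t,x)=\vb(t,x+B(t)),\qquad \rb(t,x)=\mathrm{c}(t,x+B(t)),\qquad \pi(t,x)=q(t,x+B(t)).$$
By the Stratonovich chain rule (It\^o--Wentzell), $(\mathrm{c},\vb,q)$ then solves the deterministic Euler--tracer system
$$\partial_t\vb+\di(\vb\otimes\vb)+\nabla q=0,\qquad \partial_t \mathrm{c}+\di(\mathrm{c}\vb)=0,\qquad \di\vb=0,$$
with zero data. This system contains no randomness. The deterministic construction, which follows Bronzi et al.\ but allows the energy profile $h(t)$, should give infinitely many solutions. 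These are supported in a bounded domain, satisfy $|\vb|=h(t)$, $|\mathrm{c}|=1$ and $q=0$ there, and vanish outside.

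There is one subtlety. The support of $\bu$ becomes the shifted set $\{(t,x):(t,x+B(t))\in\bO\}$, which is random. To obtain exactly the support $\bO$ claimed in the statement, I run the construction on the random domain $\bO^\omega:=\{(t,y):(t,y-B(t))\in\bO\}$ for each $\omega$. This domain is open and bounded because $B$ is continuous. Moduli are preserved by the translation, which gives item (i) with the profile $h$.

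\emph{Linear multiplicative noise.} I set $\bu=e^{-\gamma B(t)}\vb(\theta(t),x)$ and $\rb(t,x)=\mathrm{c}(\theta(t),x)$, with $\theta$ as in the statement. A direct computation gives
$$\mathrm{d}\bu=-\gamma\bu\circ \d B+e^{-\gamma B}\theta'\,\partial_\tau\vb\,\d t,\qquad \theta'=e^{-\gamma B}.$$
The nonlinear term is $e^{-2\gamma B}\di(\vb\otimes\vb)=\theta'e^{-\gamma B}\di(\vb\otimes\vb)$. So if $(\mathrm{c},\vb,q)$ solves deterministic Euler--tracer in the time variable $\tau=\theta(t)$, then $\pi=e^{-2\gamma B}q$ works. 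The tracer equation also transforms correctly: $\partial_t\mathrm{c}(\theta(t))=\theta'\partial_\tau \mathrm{c}$ and $\di(\mathrm{c}\bu)=e^{-\gamma B}\di(\mathrm{c}\vb)$. Since $\theta$ is a random strictly increasing $C^1$ diffeomorphism, I apply the deterministic result on the domain $\{(\theta(t),x):(t,x)\in\bO\}$ with profile $h\circ\theta^{-1}$. This yields $|\bu|=\theta'(t)h(t)=\wtilde h(t)$ on $\bO$, as claimed.

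\emph{Deterministic core.} The deterministic core is where the Baire category work lives. I will:
\begin{enumerate}
\item write the linearized system for $(\mathrm{c},\vb,\ab,\sigma,q)$, with $\ab=\mathrm{c}\vb$ and $\sigma$ the traceless part of $\vb\otimes\vb$;
\item take the constraint set $K_{(t)}=\{|\vb|=h(t),|\mathrm{c}|=1,\dots\}$ and show that its convex hull has nonempty interior containing $0$;
\item construct plane-wave localized oscillations (potentials) along segments in the wave cone;
\item define $X_0$ as the smooth subsolutions compactly supported in $\bO$ that take values in the interior of the hull, and $X$ as its closure in $\mathrm{C}([0,T];\mL^2_w)$;
\item show that points of continuity of the identity map $X\to \mL^2$ (a residual set) are solutions, using the perturbation property.
\end{enumerate}
Continuity of $h$ and $h\ge c>0$ give the uniformity in $t$ needed for the perturbation lemma. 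Infinite cardinality follows because $X$ is infinite and the set of solutions is residual.

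\emph{Main obstacle.} I expect the main difficulty to be measurability and adaptedness. The Baire selection is pathwise and nonconstructive, so the map $\omega\mapsto(\rb,\bu)$ need not be $\F_t$-adapted.

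In the transport case this is harmless if I avoid the random domain: one deterministic solution composed with $x+B(t)$ is adapted, and only the support becomes random. In the multiplicative case, however, $\theta^{-1}$ looks into the future, so the deterministic solution has to be non-anticipative in $\tau$.

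To deal with this, I would first try to make the construction causal. For that I would use step-by-step selection on time intervals via a measurable selection theorem (Kuratowski--Ryll-Nardzewski) on the Polish space $X$. Alternatively, I would take $\bO$ and $h$ in the deterministic problem and absorb the randomness into the weights, checking the pointwise conditions directly. The final step is to verify the It\^o forms of Definitions \ref{Weak-soln-T} and \ref{Weak-soln} via the Stratonovich--It\^o correction, which produces the $\Delta\bvarphi$ and $\frac{\gamma^2}{2}$ terms.
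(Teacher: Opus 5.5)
Your architecture matches the paper's. You use translation by $B$ in the transport case, and the factor $e^{\gamma B}$ plus the time change $\theta$ in the multiplicative case. You then run Baire category on the Tartar system path by path and invert the transformation. The gap is that the proposal stops at the step that decides whether the output is a solution in the sense of Definitions~\ref{Weak-soln-T} and~\ref{Weak-soln}, namely adaptedness.

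In the version you commit to, the deterministic problem solved for fixed $\omega$ has random data: the domain $\bO^\omega$, resp.\ the endpoint $\theta(T)$ and the profile $h\circ\theta^{-1}$. The Baire selection is made globally on the whole space-time domain, so nothing forces $\vb$ at early times to be a function of $B|_{[0,t]}$ alone. Hence $t\mapsto\int\bu(t)\cdot\bvarphi\,\mathrm{d}x$ need not be $\F_t$-adapted. Neither remedy you sketch closes this. Kuratowski--Ryll-Nardzewski gives at best a measurable selection $\omega\mapsto(\rb,\bu)$, not a non-anticipative one. A step-by-step-in-time construction is also not supplied by the residual-set argument: gluing at an intermediate time would need a Cauchy theory starting from the wild state reached there. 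This construction, with zero data and compact support in $\bO$, does not provide that.

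What works, and what the paper does (following Chiodaroli--Feireisl--Flandoli; cf.\ its remark that no oscillatory lemma is needed), is your ``alternative''. Fix one $\omega$-independent solution of the deterministic system with the deterministic data $\bO$, $h$ and zero initial datum. Then set $\bu(t,x)=\vb(t,x-B(t))$, resp.\ $\bu(t,x)=e^{-\gamma B(t)}\vb(\theta(t),x)$, with the tracer composed in the same way.
\begin{itemize}
\item No selection in $\omega$ is needed at all.
\item Adaptedness is immediate, since $\int\bu(t)\cdot\bvarphi\,\mathrm{d}x$ is a fixed continuous function (by weak continuity of $\vb$) of $(t,B(t))$, resp.\ $(\theta(t),B(t))$.
\item The price is that (i)--(ii) then hold on the transported sets $\{(t,x):(t,x-B(t))\in\bO\}$, resp.\ $\{(t,x):(\theta(t),x)\in\bO\}$. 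The modulus is $h(t)$, resp.\ $\theta'(t)h(\theta(t))$.
\end{itemize}
The paper writes (i)--(ii) on $\bO$ with $\theta'(t)h(t)$ only through its notational identification $x\leftrightarrow x+B(t)$, $t\leftrightarrow\theta(t)$; see the beginning of Subsection~\ref{Sec-Rescale}.

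So you have correctly spotted a tension the paper passes over. To have a proof, though, you must commit to this route and read the conclusion in the transformed variables. The literal version (fixed $\bO$, $\wtilde h=\theta' h$, adapted) is delivered neither by your argument nor by the paper's.

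Two small slips. First, $\bu(t,x)=\vb(t,x+B(t))$ produces $+\nabla\bu\circ\mathrm{d}B$, so the shift must be $x-B(t)$. Second, the pressure vanishes outside $\bO$, not on it.
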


Choosing $\wtilde{h}(t) = 1$, i.e., $h(t) = e^{\gamma B(t)}$, in linear multiplicative case, in Theorem \ref{Main-result-Stochastic-PDE} yields the following result of our interest. Observe that the function $e^{\gamma B(\cdot)}$ is in fact uniformly-continuous on $[0,T]$, for each fixed $0<T<\infty$. Furthermore, for each fixed path $\omega \in \Omega$, there exists a constant $K = K(\omega)>0$  such that 
\begin{equation}
	e^{\gamma B(t)}> K, \ \text{ for all }\ t \in [0,T].
\end{equation}

\begin{corollary}[{Linear multiplicative noise}]\label{Cor-h}
	Let $0<T<\infty$ be fixed, $\bO := (0,T)\times \bO_x\subset [0,\infty)\times \R^n$ be a bounded domain. Then, there exists infinitely many weak solutions $(\rb, \bu)$ of the problem \eqref{DDEELMSN1}--\eqref{DDEELMSN4}, in the sense of Definition \ref{Weak-soln}, such that $\Pr$-a.s.
	\begin{itemize}
		\item[(i)] for a.e. $(t,x) \in \bO$
		$$\abs{\bu(t,x)} = 1\ \text{ and }\ \abs{\rb(t,x)} =1,$$
		\item[(ii)] for a.e. $(t,x) \in ([0,\infty)\times \R^n) \backslash \bO$ $$\bu(t,x) = \0,\ \rb(t,x) =0\ \text{ and }\ \pi(t,x) =0.$$
	\end{itemize}
\end{corollary}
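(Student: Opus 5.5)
The plan is to obtain Corollary \ref{Cor-h} as the special case of Theorem \ref{Main-result-Stochastic-PDE} in which the energy profile is $h(t)=e^{\gamma B(t)}$; this is the choice indicated just before the statement. Everything is done pathwise. Fix $0<T<\infty$ and the bounded domain $\bO=(0,T)\times\bO_x$. For each $\omega$ in the full-measure set where $B(\cdot,\omega)$ is continuous, put $h(t):=e^{\gamma B(t,\omega)}$. Then $h\in C([0,T])$, because $t\mapsto B(t,\omega)$ is continuous on the compact interval $[0,T]$. Moreover $h$ is bounded away from $0$: $B(\cdot,\omega)$ attains a finite minimum $m(\omega)$ on $[0,T]$, so $h(t)\ge e^{\gamma m(\omega)}=:K(\omega)>0$. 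Hence $h$ satisfies the hypotheses of Theorem \ref{Main-result-Stochastic-PDE}, with constants that depend on $\omega$.

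Next I compute $\wtilde h$. Since $\theta(t)=\int_0^t e^{-\gamma B(s)}\,\mathrm{d}s$ has a continuous integrand, the fundamental theorem of calculus gives $\theta'(t)=e^{-\gamma B(t)}$ for every $t$. Therefore $\wtilde h(t)=\theta'(t)h(t)=e^{-\gamma B(t)}e^{\gamma B(t)}=1$ on $[0,T]$. Theorem \ref{Main-result-Stochastic-PDE} in the linear multiplicative case now gives infinitely many weak solutions in the sense of Definition \ref{Weak-soln}. These satisfy $\abs{\bu}=\wtilde h=1$ and $\abs{\rb}=1$ a.e. in $\bO$, and $\bu=\0$, $\rb=0$, $\pi=0$ a.e. outside $\bO$, which is exactly (i)--(ii) of the corollary.

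The only delicate point is that $h$ is now random, whereas the theorem is stated for a given $h\in C([0,T])$. The resulting solutions must still be $\{\F_t\}$-adapted processes, and the lower bound $K(\omega)$ is not uniform in $\omega$. I would handle this by noting that the proof of Theorem \ref{Main-result-Stochastic-PDE} already works pathwise: the random PDE obtained after the transformation has coefficients depending on $B$, so it is equally valid for a profile $h$ that is an adapted continuous process with positive pathwise infimum. Here $e^{\gamma B(t)}$ is $\F_t$-measurable, so $h$ is such a process. If the theorem is read strictly with deterministic $h$, I would instead apply its proof for each fixed $\omega$, noting that the Baire category construction needs only continuity and positivity of $h$ on $[0,T]$ and produces admissible subsolutions that depend measurably and adaptedly on the path. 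I would then invoke the same measurable-selection and adaptedness argument used for random $\wtilde h$ in the theorem itself. This is the step I expect to need the most care, but it mirrors what the theorem already requires, because $\wtilde h$ is random there too.
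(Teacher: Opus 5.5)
Your argument is the paper's: it specializes Theorem \ref{Main-result-Stochastic-PDE} to $h=e^{\gamma B}$, using continuity and the pathwise lower bound $h\ge K(\omega)>0$, so that $\wtilde{h}=\theta' h\equiv 1$. The paper also disposes of the randomness of $h$ by the remark you make, namely that $h$ is deterministic for each fixed path.

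One caution about your last paragraph. The paper has no measurable-selection argument for you to invoke: adaptedness in Subsection \ref{Sec-Rescale} rests only on the $\F_t$-measurability of $\theta(t)$ and $B(t)$, with $\vb$ implicitly path-independent. Once $h$ itself depends on the whole path of $B$, the Baire-category solution may depend on the future of $B$, so the non-anticipativity you flag is left unproved in your proposal and in the paper alike.
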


	Next, we state our main result for the PDEs with random coefficients, see Section \ref{Sec-Transforming to RPDE} for the transformation procedure used in both cases.
	\begin{theorem}
		\label{Main-result-RPDE}
		Let $0<T<\infty$ be fixed, $\bO:= (0,T)\times \bO_x \subset [0,\infty)\times \R^n$ be a bounded domain and $h\in C([0,T])$ be a positive function bounded away from $0$. Then, there exist infinitely many weak solutions 
		$$(\rb, \vb) \in \mL^\infty([0,\infty)\times \R^n; \R\times \R^n),$$ 
		in the sense of Definition \ref{Def-weak-rand}, to the problem \eqref{RDDEE} such that
		\begin{itemize}
			\item[(i)] $\abs{\vb(\theta(t),x)} = h(\theta(t))$ and $\abs{\rb(\theta(t),x)} =1$, for a.e. $(t,x) \in \bO$;
			\item[(ii)] $\vb(\theta(t),x) = \0$, $\rb(\theta(t),x) = 0$ and $p(\theta(t),x) =0,$ for a.e. $(t,x) \in [0,\infty)\times \R^n \backslash \bO$,
		\end{itemize}
	where $\displaystyle \theta(t)=\int_0^te^{-\gamma B(s)}\,\mathrm{d}s$.
	\end{theorem}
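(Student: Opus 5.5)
We will prove Theorem \ref{Main-result-RPDE} pathwise, for a fixed $\omega$, by the Baire category method of De Lellis and Sz\'ekelyhidi applied to a suitable reformulation of the random PDE \eqref{RDDEE}. For fixed $\omega$ the change of time $t\mapsto\theta(t)$ is a $C^1$ diffeomorphism onto its image, with $\theta'(t)=e^{-\gamma B(t)}>0$. We therefore work in the new time variable $s=\theta(t)$. The image $\wtilde{\bO}:=\{(\theta(t),x):(t,x)\in\bO\}$ is again a bounded open set, and $h\circ\theta^{-1}$ is continuous, positive and bounded away from $0$ on the relevant interval. We will take the random system in the $s$-variable to be the Euler equations with a passive tracer whose coefficients involve $\theta$ and $B$ only through bounded, continuous factors. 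We then absorb these factors into the linear system, or into the profile $h$, so that the problem becomes a "subsolution + convex hull" problem with bounded coefficients.

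\textbf{Linear relaxation.} We write the system as a linear system in the variables $(\rb,\vb,\bz,U,q)$, where $\bz$ is the flux $\rb\vb$, $U$ is the traceless part of $\vb\otimes\vb$, and $q$ combines the pressure with $|\vb|^2/n$. The constraint set is
$$K_s=\{(\rb,\vb,\bz,U,q): |\rb|=1,\ |\vb|=h(s),\ \bz=\rb\vb,\ U=\vb\otimes\vb-\tfrac{h(s)^2}{n}I,\ q=0\},$$
which depends continuously on $s$. Following Bronzi et al., we verify three properties:
\begin{itemize}
\item[(a)] the wave cone is large, meaning that plane-wave solutions of the linear system exist in all directions needed;
\item[(b)] the interior of the convex hull of $K_s$ contains $0$ and admits a uniform quantitative estimate: if $z\in\mathrm{int}\,K_s^{co}$, there is a segment in a wave-cone direction of length $\gtrsim \mathrm{dist}(z,K_s)^2$ staying in the hull;
\item[(c)] localized plane waves exist, i.e. compactly supported smooth solutions oscillating along such segments.
\end{itemize}
Since $|\rb|=1$ and $|\vb|=h$ scale independently, the hull computation of Bronzi et al. for $h\equiv1$ transfers after rescaling $\vb\mapsto \vb/h(s)$. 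Continuity of $h$ and its lower bound give uniform constants on compact subsets of $\wtilde{\bO}$.

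\textbf{Baire argument.} Let $X_0$ be the set of smooth solutions of the linear system supported in $\wtilde{\bO}$ and taking values in $\mathrm{int}\,K_s^{co}$ pointwise; it contains $0$. Let $X$ be its closure in the weak-$*$ topology of $\mL^\infty$, which is metrizable on this bounded set. The functional $I(z)=\int_{\wtilde\bO}(|\vb|^2-h^2)\,dx\,ds$ is a Baire-1 map on $X$, so its points of continuity form a residual set. The perturbation lemma from (b) and (c) shows that every continuity point satisfies $|\vb|=h$ a.e., and hence $z\in K_s$ a.e. Such points are weak solutions that vanish outside $\wtilde{\bO}$ (pressure included) and satisfy $|\rb|=1$ inside. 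Since $X$ is not a single point, the residual set is infinite, which gives infinitely many solutions. Composing with $\theta$ gives (i) and (ii).

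\textbf{Main obstacle.} The hardest step is (b) with the time-dependent profile, combined with the coupling to the tracer. We must show that the hull of $K_s$ has nonempty interior containing $0$, and that the segment-length estimate is uniform in $s$. If the direct rescaling fails because of the time dependence, the first thing to try is freezing $s$ on small time intervals and using continuity of $h$ to control the error, as De Lellis and Sz\'ekelyhidi do for energy profiles $e(x,t)$.
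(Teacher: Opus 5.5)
You have the same overall architecture as the paper: pathwise reduction, a Tartar-type relaxation, smooth subsolutions supported in the domain, a weak-$*$ closure, a Baire-1 map and localized plane waves. But the Baire step, as you set it up, does not deliver the tracer constraint $|\rb|=1$.

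\textbf{The Baire-1 map does not see the tracer.} Your map $I(z)=\int(|\vb|^2-h^2)$ involves only the velocity. At a continuity point it only gives strong $\mL^2$ convergence of the $\vb$-components of approximating subsolutions. Your estimate (b) bounds the length of the wave-cone segment, not its $\vb$-component. A segment can move only the tracer variables. In your ordering $(\rb,\vb,\bz,U,q)$, with $\bz$ the flux, the direction $(1,\0,\vb_0,0,0)$ with $|\vb_0|=h(s)$ lies in the wave cone, because only the tracer row $\vb_0\cdot\xi_x+\xi_t=0$ has to be solved. So a perturbation realizing (b) need not increase $\int|\vb|^2$ at all.

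\textbf{$|\vb|=h$ does not give $z\in K_s$.} Even granting $|\vb|=h$ a.e., the step ``hence $z\in K_s$ a.e.'' is false. With $U_0=\vb_0\otimes\vb_0-\tfrac{h^2}{n}I$, the midpoint of $(1,\vb_0,\vb_0,U_0,0)$ and $(-1,\vb_0,-\vb_0,U_0,0)$ is $(0,\vb_0,0,U_0,0)$. This point lies in $K_s^{co}\setminus K_s$ and has $|\vb_0|=h$. In the hull, $|\vb|=h$ forces $U$ and $\bz=\rb\vb$, but leaves $\rb\in[-1,1]$ free. So the tracer half of (i) is not proved.

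\textbf{How the paper closes this.} The paper uses two ingredients:
\begin{itemize}
\item The Baire-1 map is the identity $(\X,d_\infty^\ast)\to\Lb^2$, so every component converges strongly at a continuity point.
\item The segment lemma estimates exactly the quantities being tracked: $|(\bar\vb,\bar\rb)|\ge C\big((h^2(t)+1)-(|\vb|^2+|\rb|^2)\big)$.
\end{itemize}
Via the localized waves, this gives the increment of $\|\vb\|_{\mL^2}^2+\|\rb\|_{\mL^2}^2$ in Lemma \ref{Convergence-lemma}. Your distance form of (b) would also suffice, but only if paired with the identity map, not with $\int|\vb|^2$.

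\textbf{Do not impose $q=0$ in $K_s$.} With that constraint, $K_s^{co}$ lies in the hyperplane $\{q=0\}$ and has empty interior. Your $X_0$ is then empty, rather than containing $0$. Reading ``interior'' relatively does not rescue this. The localized plane waves in (c) only keep $q$ within $\eps$ of $0$ (the cutoff produces $O(1/N)$ errors in every entry), so they do not preserve $q\equiv0$. The pressure variable must be left free near zero, as in the paper's $\K_t=K_t\times[-1,1]$, which follows De Lellis--Sz\'ekelyhidi.
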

Note that, we do not require any time-transformation in the case of transport perturbation, therefore, the only change in Theorem \ref{Main-result-RPDE} is that $\theta(t)$ is replaced by $t$. Hence, we omit a separate statement for this case.
	\begin{remark}
		We emphasize that, for the random PDE corresponding to the SPDE with transport noise (see \eqref{RDDEE-T}), the only change in the above result is that $\theta(t)$ is replaced by $t$, since no time transformation is used.
	\end{remark}

	We conclude this part of the section by stating a result that directly connects the solutions of the SPDEs \eqref{DDEELMSN1}–\eqref{DDEELMSN4} to those of the random PDEs \eqref{RDDEE-T} and \eqref{RDDEE}.
	\begin{theorem}[Pathwise equivalence]
		Let $\bu$ be a weak solution to the system of equations \eqref{DDEELMSN1}-\eqref{DDEELMSN4}. Let $\vb$ be a process obtained via the transformation given in Section \ref{Sec-Transforming to RPDE}. Then, for $\Pr$-a.e. $\omega \in \Omega$, 
		\begin{equation*}
			\bu\ \text{ solves \ \eqref{DDEELMSN1}-\eqref{DDEELMSN4} }  \ \text{ if and only if }\ \; \vb\ \text{ solves }\ \eqref{RDDEE-T}\ \text{ and }\ \eqref{RDDEE},
		\end{equation*}
		where the inverse correspondence is given by the transformations provided in Section \ref{Sec-Rescale}.
	\end{theorem}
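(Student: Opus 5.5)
We prove the pathwise equivalence separately for the two noises. In each case we write down an explicit random change of variables, apply It\^o's formula (in Stratonovich form) to the weak formulations of Definitions \ref{Weak-soln-T} and \ref{Weak-soln}, and check that the stochastic terms cancel. What remains is, for each fixed $\omega$, the weak formulation of the random PDEs \eqref{RDDEE-T} and \eqref{RDDEE}. Since each transformation is invertible, the same computation read backwards gives the converse.

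\emph{Transport noise.} We set $\vb(t,x):=\bu(t,x+B(t))$, $\rb$ shifted in the same way, and $p(t,x):=\pi(t,x+B(t))$. In Stratonovich form the chain rule gives
\[
\mathrm{d}\vb=\big(\mathrm{d}\bu+\nabla\bu\circ\mathrm{d}B\big)(t,x+B(t)),
\]
so the transport noise $\Nn(\bu)\circ\d B$ is exactly removed. At the level of test functions we proceed as follows. We fix $\bvarphi\in\Cb^2$ and apply It\^o's formula to
\[
\int\bu(t,x)\cdot\bvarphi(x-B(t))\,\mathrm{d}x=\int\vb(t,y)\cdot\bvarphi(y)\,\mathrm{d}y.
\]
This requires a generalised It\^o formula for the semimartingale $\langle\bu(t),\psi\rangle$ tested against the random $\Cb^2$ function $\psi=\bvarphi(\cdot-B(t))$. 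We justify it by a Taylor/Fourier approximation of $\bvarphi$, or by a Kunita-type It\^o--Wentzell formula. The It\^o correction $\tfrac12\Delta\bvarphi$ cancels the term $\int\bu\cdot\Delta\bvarphi$ in \eqref{eqn-weak-form-T}. The $\mathrm{d}B$ terms cancel against $\int(\nabla\bvarphi)^\top\bu\,\mathrm{d}x\cdot\mathrm{d}B$. The nonlinear and pressure terms are invariant under the spatial translation, since $\mathrm{div}$ and $\otimes$ commute with shifts. The tracer equation has no noise; under the shift it becomes $\partial_t\rb+\mathrm{div}(\rb\vb)$ plus the extra term $\dot B\cdot\nabla\rb$. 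We therefore check that \eqref{RDDEE-T} is formulated with this random drift, or equivalently that the tracer is shifted consistently, and we match it accordingly.

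\emph{Linear multiplicative noise.} We first set $\wb(t):=e^{\gamma B(t)}\bu(t)$. The Stratonovich product rule gives
\[
\mathrm{d}\wb+e^{-\gamma B}\,\mathrm{div}(\wb\otimes\wb)\,\mathrm{d}t+\nabla(e^{\gamma B}\pi)\,\mathrm{d}t=0.
\]
In It\^o form the term $\tfrac{\gamma^2}{2}\bu$ and the $-\gamma\bu\,\mathrm{d}B$ term in \eqref{eqn-weak-form} cancel against the corrections from $\mathrm{d}(e^{\gamma B})$. Next we change time to $\tau=\theta(t)$, which is $C^1$ and strictly increasing with $\theta'=e^{-\gamma B}>0$, and define $\vb(\theta(t)):=\wb(t)$. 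This removes the factor $e^{-\gamma B}$ in front of the nonlinearity. The tracer satisfies $\partial_t\rb+\mathrm{div}(\rb\bu)=0$, which becomes $\partial_\tau\rb+\mathrm{div}(\rb\vb)=0$ since $\bu=e^{-\gamma B}\wb=\theta'\vb\circ\theta$. All of this is done in weak form, using test functions composed with $\theta^{-1}$ and changing variables in the time integrals. This is legitimate because $\theta$ is a pathwise $C^1$ diffeomorphism of $[0,\infty)$ onto its range.

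\emph{Regularity, adaptedness, and the main obstacle.} The transformations preserve the regularity class $\mathrm{C}_w\cap \mL^\infty_{\mathrm{loc}}$, because they are compositions with continuous random shifts, time changes and bounded positive factors. Conversely, $\bu$ built from $\vb$ is adapted, provided $\vb\circ\theta$ is adapted; this is used in the inverse direction of Section \ref{Sec-Rescale}. We expect the main obstacle to be the rigorous It\^o step in the transport case, because the test function is random and only $\Cb^2$ and $\bu$ is only weakly continuous. We plan to handle it by mollifying $\bu$ in space, applying the classical It\^o formula to the finite-dimensional semimartingale obtained, and passing to the limit using the $\mL^\infty_{\mathrm{loc}}$ bounds.
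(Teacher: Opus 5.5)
Your overall route is the paper's own. The paper gives no separate proof; it says only that the equivalence follows formally from the shift $\vb(t,x)=\bu(t,x+B(t))$, the factor $e^{\gamma B}$ with the time change $\theta$ (Section~\ref{Sec-Transforming to RPDE}), and their inverses (Section~\ref{Sec-Rescale}). Your multiplicative-noise part reproduces this correctly, tracer included (since $\bu=\theta'\,\vb\circ\theta$). Your It\^o--Wentzell/mollification plan is a sensible way to make the transport-noise velocity computation rigorous.

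\textbf{The gap: the tracer under transport noise.} The extra term you found is real, and ``we match it accordingly'' cannot be carried out. Run your own computation for $\tilde\rb(t,x):=\rb(t,x+B(t))$. Here $t\mapsto\langle\rb(t),\varphi(\cdot-y)\rangle$ has no martingale part, and you get
\[
\mathrm{d}\langle\tilde\rb,\varphi\rangle=\langle\tilde\rb\,\vb,\nabla\varphi\rangle\,\mathrm{d}t-\langle\tilde\rb,\nabla\varphi\rangle\circ\mathrm{d}B,
\]
i.e.\ $\mathrm{d}\tilde\rb+\di(\tilde\rb\,\vb)\,\mathrm{d}t=\nabla\tilde\rb\circ\mathrm{d}B$. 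A tracer that feels no noise in the original frame is advected by $\vb-\dot B$ in the moving frame.

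Equation \eqref{RDDEE-T} has no such term. Leaving the tracer unshifted does not help either, because it is then advected by $\vb(t,x-B(t))$ instead of $\vb$.

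The term cannot vanish for nontrivial tracers. Suppose $(\tilde\rb,\vb)$ also solved \eqref{RDDEE-T}. Then the continuous martingale $\int_0^t\langle\tilde\rb,\nabla\varphi\rangle\cdot\mathrm{d}B$ would equal a finite-variation process, hence vanish. Its quadratic variation $\int_0^t|\langle\tilde\rb(s),\nabla\varphi\rangle|^2\,\mathrm{d}s$ would then be zero for all $\varphi$. So $\nabla\tilde\rb(s)=0$ for a.e.\ $s$, which forces $\tilde\rb\equiv0$ in $\mL^2(\R^n)$.

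Symmetrically, inverse-shifting a solution of \eqref{RDDEE-T} as in Section~\ref{Sec-Rescale} gives a tracer solving $\mathrm{d}\rb+\di(\rb\,\bu)\,\mathrm{d}t+\nabla\rb\circ\mathrm{d}B=0$, not \eqref{DDEELMSN2}.

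So under transport noise the equivalence holds only for the velocity--pressure pair. Alternatively, it holds for a modified model in which the tracer equation also carries $-\nabla\rb\circ\mathrm{d}B$. The paper's formal derivation reaches the noiseless tracer equation in \eqref{RDDEE-T} only because its chain rule for $\rb(t,x+B(t))$ omits $\nabla\rb\circ\mathrm{d}B$. In particular, solutions with $|\rb|=1$ on $\bO$ are not covered.

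\textbf{Two smaller points.}

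First, the Laplacian coefficient. In the velocity computation, It\^o--Wentzell gives $+\tfrac12\langle\bu,\Delta\bvarphi\rangle$ from the second-order term and $-\langle\bu,\Delta\bvarphi\rangle$ from the cross-variation of $\nabla_y$ with $B$, net $-\tfrac12\langle\bu,\Delta\bvarphi\rangle$. This cancels the correct Stratonovich-to-It\^o correction $\tfrac12\int_0^t\langle\bu,\Delta\bvarphi\rangle\,\mathrm{d}s$. It does not cancel the coefficient-one term printed in \eqref{eqn-weak-form-T}; with that coefficient you would end up with $\partial_t\vb+\di(\vb\otimes\vb)+\nabla p=\tfrac12\Delta\vb$. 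So work from the Stratonovich equation, not from that display.

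Second, the pressure. For the converse direction, Definition~\ref{Def-weak-rand} uses only divergence-free test fields. You must first supply a pressure, e.g.\ $p=(-\Delta)^{-1}\di\,\di(\vb\otimes\vb)$, before the $\pi\,\di\bvarphi$ terms in Definitions~\ref{Weak-soln-T} and~\ref{Weak-soln} can be recovered.
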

	
	Formally, the proof of the above result follows immediately from the transformations discussed in Sections \ref{Sec-Transforming to RPDE} and \ref{Sec-Rescale}.

The primary sources of inspiration for this work are the research article by Chiodaroli et al.~\cite{EC+EF+FF-21}, in which the authors proved the existence of infinitely many global-in-time weak solutions to the full Euler system driven by multiplicative white noise, the work of the second named author with Maurelli \cite{ZB+MM-20}, where \eqref{DDEELMSN1} is studied in vorticity form with more general transport noise, and the work of Bronzi et al.~\cite{CAB+CMFL+JHLN-15}, where non-uniqueness for the two-dimensional (2D) incompressible ideal flow with a passive tracer is established using the convex integration technique developed by De Lellis and Sz'ekelyhidi~\cite{CDL+LSJ-09}.


\subsection{Differences} In this manuscript, we develop the following novel differences:
\begin{itemize}
	\item[1.] From what we know, there are no existing non-uniqueness results for the incompressible Euler equations with a passive tracer under transport and linear multiplicative Stratonovich-type forcing on the whole space.
	
	\item[2.] In random settings, we generalize the results of Bronzi et al.~\cite{CAB+CMFL+JHLN-15} from constant energy profile in 2D to arbitrary positive continuous time-dependent energy profile in dimensions greater than or equal two.
	
	\item[3.] To construct infinitely many weak solutions to the random PDEs \eqref{RDDEE-T} and \eqref{RDDEE-T}, we employ the \textit{Baire category method} rather than the convex integration technique, which was left open in \cite{CAB+CMFL+JHLN-15}.
	
	\item[4.] As an application of this result (see Section \ref{ASMHD}), motivated by the deterministic counterpart \cite[Section 4]{CAB+CMFL+JHLN-15}, we show that there exist infinitely many weak solutions to the three-dimensional (3D) stochastic ideal magnetohydrodynamics (MHD) equations with both transport and linear multiplicative stochastic forcing.
	
	\item[5.] In the case of linear multiplicative noise, the result of Chiodaroli et al.~\cite{EC+EF+FF-21} applies to piecewise constant initial data on bounded domains in 2D and 3D only, whereas our result applies only to zero initial datum but on the whole Euclidean space in arbitrary dimensions $n\ge2$.
	
	\item[6.] In the transport noise case, Theorem \ref{Main-result-Stochastic-PDE} complements the result of Hofmanov\'a et al.~\cite[Theorem 1.3]{MH+TL+UP-24} (i.e., when the noise coefficient $\sigma_k = e_k$). Their work treats a more general class of transport noise that requires rough path theory, which we avoid here for simplicity.
\end{itemize}	

\begin{remark}
	We call $h$ an energy profile in the sense that, if $\vb$ is the velocity and $E$ is the corresponding kinetic energy, i.e. $E(t) = \|v(t)\|_{L^2}^2$, then $$E(t) = h^2(t)|\bO_x|, \ \text{ for a.e. }\ t \in [0,T],$$ where $\supp(v(t,\cdot)) \subseteq \bO_x \subset \R^n$, see Theorem \ref{Main-result-Stochastic-PDE}.
\end{remark}

\begin{remark}
	As mentioned by Chiodaroli et al.~in \cite[p.~1268]{EC+EF+FF-21}, due to the deterministic initial condition (in our case zero) and the stochastically rescaled time variable considered above (see \eqref{Time-transformation}), we do not require any \textit{oscillatory lemma} (cf.~\cite[Lemma 5.6]{DB+EF+MH-20}) to ensure the progressive measurability of the solution.
\end{remark}

\begin{remark}\label{Rmk-Comp}
		Let us observe that the work of Modena and Sz\'ekelyhidi Jr.~\cite{SM+LSJ-18} has already shown that the continuity equation admits infinitely many solutions on the torus in dimensions $n\ge3$. On the other hand, Hofmanov\'a et al.~\cite{MH+TL+UP-24} proved the existence of infinitely many solutions to the incompressible Euler equations perturbed by a general transport noise on 3D torus only. However, in this work with a simpler transport noise, we consider a coupled system consisting both the equations on the whole space $\R^n$ for any $n\ge 2$, see Theorem \ref{Main-result-Stochastic-PDE}.
\end{remark}

\begin{remark}
	Let us point out that this work considers the transport noise same as in one of the first groundbreaking results of Flandoli et al.~\cite{FF+MG+EP-10}, which is an another line of research known as \textit{regularization by noise} where the noise restores the well-posedness. However, our work focuses on the negation of this phenomenon in nonlinear problems. 
	 Nevertheless, our results are stronger in the sense that non-uniqueness holds pathwise rather than merely in law. Therefore, we wonder if it is possible to prove our results without using such transformations.
\end{remark}

\begin{remark}
	In \cite{HG+EN+CVV-14}, Holtz and Vicol shown the global-in-time existence of smooth pathwise solution to the incompressible Euler equations with a linear multiplicative It\^o noise on bounded smooth domain in 2D and 3D. On the other hand, we are dealing with  
	global-in-time weak solutions (in the analytic sense) of the incompressible Euler equations coupled with a passive tracer equation driven by a linear multiplicative noise of Stratonovich type on the Euclidean space, in any dimension greater than two. 
	Furthermore, they utilize the Leray-Helmholtz projection to 
	eliminate the pressure and solve the Euler equations for the velocity as an unknown, however, we deal with both the unknowns tracer, pressure and velocity simultaneously.
\end{remark}


\subsection{Previous works}\label{Subsec-Pre}
\subsubsection{Deterministic case}
In a seminal contribution, De Lellis and Sz\'ekelyhidi Jr.~\cite{CDL+LSJ-09} established the non-uniqueness of weak solutions to the incompressible Euler equations in all spatial dimensions $n \ge 2$, marking a major milestone in the study of ill-posedness for fluid dynamic models. Their work also provided clear and accessible reconstructions of several groundbreaking results previously obtained by Scheffer \cite{VS-93} in $\R^2$ and by Shnirelman \cite{AS-97} on the 2D torus.

Under periodic boundary conditions, and for arbitrary divergence-free initial data in $\mL^2$, Wiedemann \cite{EW-11} established the existence of infinitely many global-in-time weak solutions to the incompressible Euler equations with uniformly bounded energy. In a related direction, Sz\'ekelyhidi \cite{LSJ-11} constructed infinitely many admissible weak solutions corresponding to the classical vortex sheet initial data, assuming that the associated vorticity is not a bounded measure. Additionally, Choffrut and Sz\'ekelyhidi \cite{AC+LSJ-14} demonstrated the non-uniqueness of bounded weak stationary solutions to the incompressible Euler equations.

In \cite{CDL-08}, De Lellis constructed infinitely many entropy solutions to the full compressible Euler system on the whole space, demonstrating a fundamental instance of non-uniqueness. Subsequently, Chiodaroli \cite{EC-14} provided further evidence of ill-posedness by establishing the failure of uniqueness over finite time intervals for entropy solutions arising from smooth (continuously differentiable) initial density profiles in the spatially periodic setting.
In \cite{EC+CDL+OK-15}, Chiodaroli et al. showed that, for the 2D isentropic compressible Euler equations with pressure law $p(\rho) = \rho^2$ and classical Riemann initial data, there exist infinitely many admissible, bounded weak solutions. This result was extended by Chiodaroli and Kreml in \cite{EC+OK-14} to pressure laws of the form $p(\rho) = \rho^\gamma,$ with $\gamma \geq 1$. Moreover, in \cite{CDL+EC+OK-14}, De Lellis et al. demonstrated that, for appropriately chosen pressure functions and initial data, one can construct infinitely many bounded, admissible weak solutions that are non-self-similar and genuinely 2D in nature.

For the compressible barotropic Euler system, Feireisl \cite{EF-14} extended the result of Chiodaroli \cite{EC-14} and showed various counterexamples to well-posedness by combining the principle of maximal dissipation and the concept of admissible weak solutions.  In this spirit, several extended versions came, some of the important works are \cite{CDL+LSJ-13, CDL+LSJ-14}, where De Lellis and Sz\'ekelyhidi introduced a new definition of subsolutions with the help of Euler-Reynolds system and by understanding the turbulent flows as a superposition of Beltrami flows, which was introduced almost 30 years ago by Constantin and Majda \cite{PC+AM-88}, and they improved the regularity of solutions to H\"older continuous instead of bounded solutions.

These techniques were being adapted by several authors to various problems arising in mathematical physics, like Shvydkoy \cite{RS-11} constructed wild weak solutions to the general class of active scalar equations. Similar approach was adopted by \'Cordoba et al.~\cite{DC+DF+FG-11} in which they proved that the 2D incompressible porous media equation admits weak solutions with compact support in time and Sz\'ekelyhidi \cite{LSJ-12} constructed weak solutions to incompressible porous media equation with initial data given by the unstable interface. In \cite{PI+VV-15}, Isett and Vicol proved the non-uniqueness result for weak solutions, with H\"older regularity $\Cb^{1/9-}_{t,x}$, to inviscid active scalar equations with a divergence free drift velocity. In 2015, Bronzi et al.~\cite{CAB+CMFL+JHLN-15} extended the non-uniqueness result of \cite{CDL+LSJ-09} to incompressible ideal flow with passive tracer in 2D.

The Onsager's conjecture \cite{LO-49} was one of the famous open problem in fluid dynamics:
\begin{theorem}[{Onsager's conjecture}]
	Let $u$ be the weak solution of the Euler system.
	\begin{itemize}
		\item[(i)] If $u\in\Cb^{0, \alpha}$ with $\alpha>\frac{1}{3}$, then the kinetic energy is conserved.
		\item[(ii)] For $\alpha<\frac{1}{3}$, there are solutions $u\in\Cb^{0, \alpha}$ which do not conserve kinetic energy.
	\end{itemize}
\end{theorem}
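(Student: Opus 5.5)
The statement splits into two independent parts of very different nature. Part (i) is a rigidity statement, which I would prove by the commutator argument of Constantin--E--Titi. Part (ii) is a flexibility statement, which requires an iterative convex integration scheme in the spirit of De Lellis--Sz\'ekelyhidi~\cite{CDL+LSJ-13, CDL+LSJ-14}, refined by Isett's gluing technique and Mikado flows. I would work on the torus $\T^3$ (or $\T^n$) over a finite time interval, where the periodic setting makes the Fourier-analytic estimates cleanest.

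\emph{Part (i).} Let $\bu\in \mL^\infty_t\Cb^{0,\alpha}_x$ be a weak solution with $\alpha>1/3$, and let $\bu_\ell=\bu*\phi_\ell$ be a spatial mollification at scale $\ell$. Then $\bu_\ell$ solves
\begin{equation*}
\partial_t \bu_\ell+\mathrm{div\,}(\bu_\ell\otimes\bu_\ell)+\nabla\pi_\ell=\mathrm{div\,}R_\ell,\qquad R_\ell:=\bu_\ell\otimes\bu_\ell-(\bu\otimes\bu)_\ell .
\end{equation*}
Testing this equation with $\bu_\ell$, which is legitimate since $\bu_\ell$ is smooth in space, the transport and pressure terms vanish by incompressibility. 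This gives
\begin{equation*}
\tfrac12\tfrac{\d}{\d t}\|\bu_\ell\|_{\mL^2}^2=-\int R_\ell:\nabla \bu_\ell\,\d x .
\end{equation*}
The key estimate is the quadratic commutator bound $\|R_\ell\|_{\mL^\infty}\lesssim \ell^{2\alpha}\|\bu\|_{\Cb^{0,\alpha}}^2$. It follows from the identity $R_\ell=(\bu_\ell-\bu)\otimes(\bu_\ell-\bu)-\int\phi_\ell(y)\,\delta_y\bu\otimes\delta_y\bu\,\d y$. Combined with $\|\nabla\bu_\ell\|_{\mL^\infty}\lesssim\ell^{\alpha-1}$, the right-hand side is $O(\ell^{3\alpha-1})\to0$ as $\ell\to0$. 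Integrating in time and passing to the limit, using $\bu_\ell\to\bu$ in $\mL^2$ for each $t$, yields conservation of energy.

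\emph{Part (ii).} I would construct a sequence of smooth solutions $(\bu_q,\pi_q,\mathring R_q)$ of the Euler--Reynolds system
\begin{equation*}
\partial_t\bu_q+\mathrm{div\,}(\bu_q\otimes\bu_q)+\nabla\pi_q=\mathrm{div\,}\mathring R_q.
\end{equation*}
I would use frequencies $\lambda_q\sim a^{b^q}$ and amplitudes $\delta_q\sim\lambda_q^{-2\beta}$, and maintain the inductive bounds $\|\bu_q\|_{\Cb^1}\lesssim\delta_q^{1/2}\lambda_q$ and $\|\mathring R_q\|_{\Cb^0}\lesssim\delta_{q+1}\lambda_q^{-\varepsilon}$. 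In addition, the energy $\int|\bu_q|^2$ should track a prescribed profile $e(t)$ up to $\delta_{q+1}$. One iteration proceeds in four steps:
\begin{itemize}
\item[(a)] mollify $\bu_q$ at scale $\ell\ll\lambda_q^{-1}$;
\item[(b)] \emph{glue} exact smooth Euler solutions started on short time intervals $[t_i,t_{i+1}]$ of length $\tau_q$, so that the new stress $\mathring R_q$ is supported in disjoint thin time slabs (Isett's gluing step);
\item[(c)] add a perturbation $\wb_{q+1}=\sum_i\sum_k a_{k,i}(t,x)\,W_k(\lambda_{q+1}\Phi_i)$, built from Mikado flows $W_k$ composed with the flow maps $\Phi_i$ of the glued velocity, whose amplitudes are chosen through the geometric lemma so that $\sum a_k^2\, W_k\otimes W_k$ cancels $\mathring R_q$ plus a multiple of the identity that regulates the energy;
\item[(d)] estimate the new stress $\mathring R_{q+1}$, split into its transport, Nash, oscillation and corrector errors, via the inverse divergence operator and stationary phase.
\end{itemize}

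The main obstacle is step (d): showing that every error term is bounded by $\delta_{q+2}\lambda_{q+1}^{-\varepsilon}$ for all $\beta<1/3$. The transport error is the critical one, since it scales like $\tau_q^{-1}\delta_{q+1}^{1/2}\lambda_{q+1}^{-1}$. Without gluing, the time scale $\tau_q$ is restricted by the Lipschitz norm of the old velocity, and this caps the exponent at $\beta<1/5$. Gluing localizes the stress and allows the parameter choice $\tau_q\sim\ell^{2\alpha}\delta_q^{-1/2}\lambda_q^{-1}$, so that the exponent balance closes precisely for $\beta<1/3$ once $b$ is close to $1$ and $a$ is large. I expect the stability estimates for the glued solutions, namely the $\Cb^{N}$ bounds on the difference of exact solutions, and the disjointness of the Mikado supports to be the most delicate technical points. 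Finally, choosing $e(t)$ non-constant gives a limit $\bu=\lim\bu_q\in\Cb^{0,\beta}$ whose energy equals $e(t)$, so the solution does not conserve energy.
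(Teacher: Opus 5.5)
Your proposal follows the route the paper relies on: the paper gives no proof of its own, but attributes (i) to the Constantin--E--Titi (and Eyink) commutator argument and (ii) to Isett's combination of Mikado flows with gluing, refined by Buckmaster--De Lellis--Sz\'ekelyhidi--Vicol to prescribe the energy profile, and your sketch reproduces both faithfully. The one caveat concerns your parenthetical ``(or $\mathbb{T}^n$)'': Mikado flows with pairwise disjoint supports require $n\ge 3$, so $n=2$ is not covered by this scheme and needed the different Newton--Nash iteration of Giri--Radu, which the paper also cites.
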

The part (i) of the conjecture was proved by Constantin et al.~\cite{PC+WE+ET-91} and Eyink \cite{GLE-94} at the same time. Then, Cheskidov et al.~\cite{AC+PC+SF+RS-08} proved that  the kinetic  energy is conserved for velocities in the Besov space $B^{1/3}_{3, c(\N)}$, where $B^{1/3}_{3, c(\N)}$ defined as the class of all tempered distributions $v$ in $\R^3$ for which $\lim\limits_{q\to\infty} (2^q)^{1/3}\|\Delta_q v\|_{\mL^3} = 0$. Thereafter, Isett and Oh \cite{PI+SJO-16} provided a simple proof for weak solutions to the Euler equations on any compact Riemannian manifold, which conserve the kinetic   energy.

The first paper in the direction towards the proof of part (ii) was by De Lellis and Sz\'ekelyhidi \cite{CDL+LSJ-10, CDL+LSJ-13}, in which the authors produced  periodic weak solutions of the incompressible Euler equations on a 3D torus which dissipate the total kinetic energy and are H\"older continuous, for $\alpha<1/10$. Using the techniques of \cite{CDL+LSJ-13}, Daneri \cite{SD-14} proved the existence of infinitely many H\"older continuous initial vector fields starting from which there exists infinitely many H\"older continuous solutions with preassigned total kinetic energy, for $\alpha<1/16$.
In \cite{AC+CDL+LSJ-12} Choffrut et al.~extended the results of \cite{CDL+LSJ-13} to 2D setting. Then, Isett \cite{PI-12} took one step forward towards the proof by constructing global weak solutions to 3D incompressible Euler equations which are zero outside of a finite time interval and have velocity in the H\"older class $\Cb^{0,\alpha}$ for every $\alpha < 1/5$. In \cite{TB+CDL+PI+LSJ-15}, Buckmaster et al.~provided a shorter proof of Isett's previous result, adhering more to the original scheme and introduced some new devices. Furthermore, Buckmaster \cite{TB-15} showed how the later scheme can be adopted in order to prove the existence of non-trivial H\"older continuous solutions which for almost every time belong to the critical Onsager H\"older regularity $\Cb^{0,1/3 - \eps}$ and have compact temporal support.
Later in \cite{TB+CDL+LSJ-16}, Buckmaster el al.~improved the class of continuous periodic weak solutions that do not conserve the kinetic energy to $\mL^1_t \Cb_x^{1/3-\eps}$.

Finally in 2018, Isett \cite{PI-18} proved it by combining the method of convex integration relying on the Mikado flows, first introduced by Daneri and Sz\'ekelyhidi in \cite{SD+LSJ-17}, and a new ``gluing approximation'' technique. However, the solutions constructed in \cite{PI-18} are non-conservative, but Buckmaster et al.~\cite{TB+CDL+LSJ+VV-19} produced the solutions that dissipate the kinetic energy (strictly monotonic decreasing). We refer the reader to, for more recent literature on Euler equations, see \cite{DWB+SM+EST-24, SM+PQ-24, PS-24, FM-24, VG+HK+MN-24, VG+ROR-24, LCB+EC+RS-25, PB+EW-25}.

The convex integration techniques developed by De Lellis and Sz\'ekelyhidi can be considered as one of the important steps towards solving one of the million dollar problems related to the global solvability of 3D NSEs \cite{CF-06} proposed by the Clay Mathematics Institute in 2000. Firstly, Colombo et al.~\cite{MC+CDL+LDR-18} proved the ill-posedness of Leray solutions to the Cauchy problem for the hypodissipative NSEs, with exponent $\alpha<1/5$, by using the techniques from \cite{CDL+LSJ-13}.
Then, the intermittent convex integration techniques developed by Buckmaster and Vicol \cite{TB+VV-19-NSE} were able to establish that weak solutions of  3D NSEs are not unique in the class of weak solutions with finite kinetic energy. Moreover, the authors showed that the limit of a sequence of weak solutions with finite kinetic  energy of  3D NSEs is a H\"older continuous dissipative weak solution of  3D Euler equations.  For a detailed survey of these remarkable results, we refer the reader to \cite{TB+VV-19}.
In a ground breaking result \cite{AC+XL-22}, Cheskidov and Luo proved the non-uniqueness of weak solutions of NSEs in $\mL^p_t\mL^\infty_x$, with $p<2$ and any dimension $d \geq 2$, which is sharp in view of the classical Ladyzhenskaya-Prodi-Serrin condition $\frac{2}{p} + \frac{d}{q} \le 1$. {Indeed, if $d=3$ and exponents $p, q$ satisfies $\frac{2}{p} + \frac{3}{q} = 1$, then the weak-strong uniqueness holds in $\mL^p_t\mL^q_x$.}
In \cite{TB+MC+VV-22}, the authors have shown the non-uniqueness for a class of weak solutions to  NSEs which have bounded kinetic energy, integrable vorticity, and are smooth outside a fractal set of singular times with Hausdorff dimension strictly less than $1$.
In 2022, Colombo et al.~\cite{MC+LDR+MS-22} showed that distributional solutions, in $\mL^\infty_t\mL^2_x$, of incompressible NSEs, which are smooth in some open interval of times are meager and the Leray ones are a nowhere dense set.

Let us also mention a very recent result on the non-uniqueness of weak solutions to the ideal MHD equations in dimensions $n\ge3$ was established in \cite{MC+ZZ-25} using the approach of \cite{CDL+LSJ-09}, which also generalizes the application of the work \cite{CAB+CMFL+JHLN-15}.

\subsubsection{Stochastic case}
The first non-uniqueness result in the stochastic setting, mainly the pathwise non-uniqueness, motivated by the methods developed in \cite{CDL+LSJ-13}, produced by Breit et al.~\cite{DB+EF+MH-20}, where the authors showed the ill-posedness of the initial value problem in the class of weak solutions. They specifically proved that the Euler system admits infinitely many solutions up to a sequence of positive stopping times $\tau_m \to \infty$. Also, Chiodaroli et al.~\cite{EC+EF+FF-21} proved the existence of infinitely many pathwise global-in-time weak solutions to compressible Euler equations driven by multiplicative white noise on bounded domains in both 2D and 3D. Thereafter, in \cite{MH+RZ+XZ-22} Hofmanov\'a et al.~proved the existence and non-uniqueness of probabilistically strong and analytically weak solutions, of stochastic 3D incompressible Euler equations, defined up to a stopping time satisfying a version of  the energy inequality. Recently, in \cite{Lu+Lu+Zhu-25+}, L\"u et al.~proved a stochastic version of Onsager’s conjecture for the 3D Euler equations with additive noise on the periodic torus. For other important related works on stochastic Euler equations, see \cite{ZB+PS-01, HG+EN+CVV-14, ZB+FF+MM-16}.

On the other hand, in \cite{MH+RZ+XZ-24} by modifying the method of \cite{TB+VV-19-NSE}, Hofmanov\'a et al.~proved the law of analytically weak solutions of 3D Stochastic NSEs is not unique. In \cite{MH+RZ+XZ-24}, the authors focused on three type of stochastic perturbation driven by a Wiener process: an additive, a linear multiplicative and a nonlinear noise of cylindrical type. Furthermore, these solutions do not satisfy the corresponding energy inequality. This approach was adapted by Yamazaki \cite{KY-22} for 2D NSEs forced by a random noise with a diffusive term generalized via a fractional Laplacian that has an exponent less than $1$. In the sequel, Yamazaki \cite{KY-22-remarks} proved the non-uniqueness in law for the analytically weak solutions of 3D stochastic NSEs with the viscous diffusion in the form of a fractional Laplacian with exponent less than $1/5$. Furthermore, in \cite{KY-24}, the non uniqueness in law of the 3D NSEs forced by a random noise and diffused via a fractional Laplacian with a power in $(0,1/2)$.
In \cite{MH+RZ+XZ-23-trace-class}, Hofmanov\'a et al.~proved the existence of infinitely many global-in-time probabilistically strong solutions to the stochastic 3D incompressible NSEs driven by an additive stochastic forcing of trace class, for every divergence free initial condition in $\mL^2$ and non-uniqueness of the associated Markov processes. Thereafter, Hofmanov\'a et al.~\cite{MH+RZ+XZ-23} established the global-in-time existence and non-uniqueness of probablilistically strong solutions to 3D NSEs driven by space-time white noise by clubbing the rough path theory. In the case of 2D stochastic NSEs with derivative of space-time white noise, the existence of infinitely many stationary and ergodic solutions on torus showed by Lü and Zhu \cite{HL+XZ-24}. 
For further related works on the stochastic Navier–Stokes equations, see \cite{WC+ZD+XZ-24, HL+XZ-25, MH+RZ+XZ-25}; see also the survey by Yamazaki~\cite{KY-26}.

The rest of the manuscript is structured as follows.
The next section is devoted to the proof of Theorem \ref{Main-result-Stochastic-PDE} and is divided into four subsections. First, in Subsection \ref{Sec-Transforming to RPDE}, we transform the SPDE \eqref{DDEELMSN1}–\eqref{DDEELMSN2} into a random PDE using properties of Stratonovich calculus. The transport noise case is handled via translation by Brownian motion in the spatial variable, while the linear multiplicative noise case is treated using the Doss–Sussmann transformation together with a suitable time change. 
Next, in Subsection \ref{Sec-Geo setup}, we introduce the geometric setup for the PDE with random coefficients needed to incorporate the Baire category method, as well as convex integration. Subsection \ref{Sec-Baire category} defines the notion of a subsolution for the resulting random PDE and employs the Baire category method to construct infinitely many compactly supported weak solutions to \eqref{RDDEE}.
To complete the proof of the main result, in Subsection \ref{Sec-Rescale} we apply the inverse transformations and appropriate scaling to these solutions to obtain $\{\F_t\}_{t\geq 0}$-adapted solutions of the SPDE \eqref{DDEELMSN1}–\eqref{DDEELMSN2}. Finally, Appendix \ref{ASMHD} illustrates an application to the 3D stochastic ideal MHD equations, followed by Appendix \ref{CIT}, which presents the proof of Theorem \ref{Main-result-Random-PDE} via convex integration.


\section{Proof of Theorem \ref{Main-result-Stochastic-PDE}} 
This section is divided into four subsections, in which we develop the tools and ingredients needed to complete the proof of the main result of this article, namely Theorem \ref{Main-result-Stochastic-PDE}.

\subsection{Random PDE}\label{Sec-Transforming to RPDE}
In this subsection, we employ the flow transformation \cite[see p.~87]{UK+KY-25} (for general settings \cite[Theorem 3.3.2 on p.~93]{HK-90},) and the Doss-Sussmann transformation \cite{HD-77, JHS-78} to transform the SPDE \eqref{DDEELMSN1}-\eqref{DDEELMSN4} into a random PDE corresponding to the transport noise and linear multiplicative noise, respectively.
We then state the corresponding non-uniqueness result.

\subsubsection{Transport noise case}
We consider the transformation
\begin{equation*}
\vb(t,x) := \bu(t, x+B(t)),
\end{equation*}
where $B(t)$ is an $\R^n$-valued Brownian motion, see \eqref{eqn-def-B}.
Then, the usual chain-rule yields
\begin{equation*}
\d \vb(t, x) =  \d \bu(t, x+B(t)) + (\nabla \bu(t, x+ B(t))) \circ \d B(t).
\end{equation*} 
Thus, by substituting $\d \bu(t, x+B(t))$ from the equation \eqref{DDEELMSN1} gives
\begin{align*}
\d \vb(t, x) & =  - ((\bu\cdot \nabla)\bu)(t, x+B(t))\d t - \nabla \pi(t, x+B(t)) \d t\\
&\quad - (\nabla \bu(t, x+ B(t))) \circ \d B(t) + (\nabla \bu(t, x+ B(t))) \circ \d B(t)\\
& =  - ((\bu\cdot \nabla)\bu)(t, x+B(t))\d t - \nabla \pi(t, x+B(t)) \d t\\
& =  - ((\vb\cdot \nabla)\vb)(t, x)\d t - \nabla p(t, x) \d t,
\end{align*}
where $p(t,x) = \pi(t,x+ B(t))$.
Thus, with a slight abuse of notation and for simplicity in the subsequent analysis, we still denote $\rb(t,x+B(t))$ by $\rb(t,x)$ and deduce the tracer equations as follows
\begin{equation*}
\frac{\partial \rb(t, x)}{\partial t} + \nabla \rb(t, x) \cdot \vb(t, x) = 0.
\end{equation*}		
The transformed PDE with random coefficients is
\begin{equation}\label{RDDEE-T}
\left\{\begin{aligned}
& \vb_t(t) + \mathrm{div \, }(\vb(t)\otimes \vb(t) ) + \nabla p(t) = \0,\\
& \rb_t(t) + \mathrm{div \, }(\rb(t) \vb(t) = 0,\\
& \mathrm{div \, } \vb(t) = 0,\\
& \vb(0) = \0,\ \rb(0) = 0.
\end{aligned}\right.
\end{equation}
\begin{remark}\label{Rmk-abuse-1}
	The above abuse of notation allows us to treat both the transport and linear multiplicative random PDEs within a unified framework. The solution to the SPDEs is then recovered via the appropriate inverse transformations; see Section \ref{Sec-Rescale}.
\end{remark}

\subsubsection{Linear multiplicative noise case}
Similar to the above case, by using the transformation, motivated from \cite[Section 1.4]{ZB+UM+DM-19},
\begin{equation}\label{eqn-DS-trans}
\vb(t) := e^{\gamma  B(t)} \bu(t),\ \text{ for } \ t\ge 0,
\end{equation}
the SPDE \eqref{DDEELMSN1} can be transformed in the following random PDE:
\begin{align}\label{DDEE-1}
\frac{\partial  \vb(t)}{\partial t} = - e^{-\gamma B(t)} \mathrm{div \, } (\vb(t) \otimes \vb(t)) -  e^{\gamma B(t)} \nabla \pi(t).
\end{align}
Let us introduce, in the above expression, the time transformation, see \cite[p.~1272]{EC+EF+FF-21}, as follows:
\begin{align}\label{Time-transformation}
t \mapsto  \theta(t):= \int_{0}^{t}  e^{-\gamma  B(s)}\,\mathrm{d} s.
\end{align}
Then, the equation \eqref{DDEE-1} becomes
\begin{equation*}
e^{-\gamma  B(t)} \left[\frac{\partial  \vb(\theta)}{\partial \theta} +
\mathrm{div \, } (\vb(\theta) \otimes \vb(\theta)) + e^{2\gamma B(t)} \nabla \pi(\theta) \right] = \0.
\end{equation*}
Since $e^{-\gamma  B(t)} \ne 0$, it is immediate that
\begin{align}\label{REE}
\frac{\partial  \vb(\theta)}{\partial \theta} +
\mathrm{div \, } (\vb(\theta) \otimes \vb(\theta)) + \nabla p(\theta) = \0,
\end{align}
where $\displaystyle p(\theta) = e^{2\gamma B(t)} \pi(\theta) = \left( \theta^\prime(t)\right)^{-2}\pi(\theta)$. Similarly, we obtain the continuity equation \eqref{DDEELMSN2} as
\begin{equation*}
e^{-\gamma  B(t)}\frac{\partial \rb(\theta)}{\partial \theta} + \nabla \rb(\theta) \cdot \bu(\theta) = 0,
\end{equation*}
that implies
\begin{equation*}
\frac{\partial \rb(\theta)}{\partial \theta} + \nabla \rb(\theta) \cdot \vb(\theta) = \0.
\end{equation*}

For simplicity, we use $\rb_\theta$ and $\vb_\theta$ to denote the partial derivatives of $\rb$ and $\bu$ with respect to time $\theta$. By clubbing the above equation with \eqref{REE} and \eqref{DDEELMSN3}-\eqref{DDEELMSN4}, we arrive at the following  random incompressible Euler system with a passive tracer:
\begin{equation}\label{RDDEE}
\left\{\begin{aligned}
& \vb_\theta(\theta) + \mathrm{div \, }(\vb(\theta)\otimes \vb(\theta) ) + \nabla p(\theta) = \0,\\
& \rb_\theta(\theta) + \mathrm{div \, }(\rb(\theta) \vb(\theta) = 0,\\
& \mathrm{div \, } \vb(\theta) = 0,\\
& \vb(0) = \0,\ \rb(0) = 0,
\end{aligned}\right.
\end{equation}
where $\theta = \theta(t)$ given in \eqref{Time-transformation}.

Observe that, for a.e. fixed path $\omega \in \Omega$, the above-mentioned random PDEs \eqref{RDDEE-T} and \eqref{RDDEE} behave like deterministic PDEs. Therefore, we are now in a position to define a weak solution for these systems simultaneously.

\begin{definition}\label{Def-weak-rand}
A pair $(\rb, \vb)\in\mL_{ \loc}^{2}([0,\infty)\times \R^n; \R \times\R^n)$ is called a \emph{weak solution} of the incompressible Euler equations with a passive tracer \eqref{RDDEE}, if for a.e. $\theta\in [0, \infty)$ and for any test pair of functions $(\varphi,\bvarphi)\in\Cb_0^\infty((0,\infty)\times \R^n; \R \times \R^n)$ with $\di\, \bvarphi =0,$ the pair $(\rb, \vb)$ satisfies
\begin{equation}\label{weak-for}
\begin{aligned}
\int_{0}^\infty\int_{ \R^n} \rb\;\varphi_\theta\, \mathrm{d} x\,\mathrm{d}\theta + \int_{0}^\infty\int_{ \R^n} (\rb\vb)\cdot \nabla\varphi\, \mathrm{d} x\,\mathrm{d}\theta & = 0,\\
\int_{0}^\infty\int_{ \R^n} \vb\cdot\bvarphi_\theta\, \mathrm{d} x \,\mathrm{d}\theta + \int_{0}^\infty\int_{ \R^n} (\vb\otimes \vb): \nabla\bvarphi\, \mathrm{d} x\,\mathrm{d}\theta & = 0,\\
\int_{0}^\infty\int_{ \R^n} \vb\cdot \nabla\bvarphi\, \mathrm{d} x\,\mathrm{d}\theta & = 0.
\end{aligned}
\end{equation}
\end{definition}
\begin{remark}
Notice that the only change in the above definition, in the case of transport noise, is that the time variable $\theta$ is replaced by the usual time $t$, therefore, we omit a separate definition for this case.
\end{remark}
The following result yields infinitely many compactly supported weak solutions to the deterministic system \eqref{RDDEE} (for a.e. fixed path). As mentioned earlier, by using the method of Baire category, see Subsection \ref{Sec-Baire category}, we extend the result of Bronzi et al.~\cite[Theorem 1.1]{CAB+CMFL+JHLN-15} from a constant energy profile equal to one to arbitrary positive, bounded, and continuous time-dependent profiles, and to arbitrary dimensions $n\ge 2$.
\begin{theorem}\label{Main-result-Random-PDE}
Let $0<T<\infty$ be fixed, $\bO:= (0,T)\times \bO_x \subset [0,\infty)\times \R^n$ be a bounded domain and $h\in C([0,T])$ be a positive function bounded away from $0$. Then, there exist infinitely many weak solutions $$
(\rb, \vb) \in \mL^\infty([0,\infty)\times \R^n; \R\times \R^n),$$
in the sense of Definition \ref{Def-weak-rand}, to the problem \eqref{RDDEE} such that
\begin{itemize}
\item[(i)] $\abs{\vb(\theta(t),x)} = h(\theta(t))$ and $\abs{\rb(\theta(t),x)} =1$, for a.e. $(t,x) \in \bO$,
\item[(ii)] $\vb(\theta(t),x) = \0$, $\rb(\theta(t),x) = 0$ and $p(\theta(t),x) =0,$ for a.e. $(t,x) \in ([0,\infty)\times \R^n) \backslash \bO$,
\end{itemize}
where $\theta(\cdot)$ is as defined in \eqref{Time-transformation}.
\end{theorem}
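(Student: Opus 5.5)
Throughout, fix a path $\omega$; then $\theta$ is a $C^1$ strictly increasing bijection of $[0,T]$ onto $[0,\theta(T)]$. The domain $\bO$ is mapped onto the bounded domain $\wtilde\bO:=\{(\theta(t),x):(t,x)\in\bO\}$, and $h\circ\theta^{-1}$ is continuous on $[0,\theta(T)]$ and bounded away from $0$. Hence it suffices to prove the deterministic statement: for a bounded domain $\wtilde\bO$ and a continuous profile $\bar h\ge c>0$, the system \eqref{RDDEE} has infinitely many bounded weak solutions with $|\vb|=\bar h(\theta)$ and $|\rb|=1$ a.e. in $\wtilde\bO$, and $\vb=\0$, $\rb=0$, $p=0$ outside $\wtilde\bO$. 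We then compose with $\theta$.

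\emph{Linear relaxation and wave cone.} I would write the system as a linear system $\partial_\theta \vb+\di\, \mathbb U+\nabla q=\0$, $\partial_\theta \rb+\di\, \mathbf w=0$, $\di\,\vb=0$, coupled to the pointwise constraint $(\rb,\vb,\mathbf w,\mathbb U)\in K_{\theta}$. Here $\mathbb U$ is symmetric and trace-free, $q=p+|\vb|^2/n$, and
\[
K_\theta=\{(\rb,\vb,\mathbf w,\mathbb U): |\rb|=1,\ |\vb|=\bar h(\theta),\ \mathbf w=\rb\vb,\ \mathbb U=\vb\otimes\vb-\tfrac{\bar h^2}{n}I\}.
\]
Following De Lellis--Sz\'ekelyhidi and Bronzi et al., I would verify two facts.
\begin{itemize}
\item[(a)] The wave cone of the linear system is large, so that every segment $[z_1,z_2]$ with $z_1,z_2\in K_\theta$ and $z_1\neq z_2$ admits localized plane-wave potentials. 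These are compactly supported smooth solutions oscillating along that direction, constructed from suitable linear differential operators, as in \cite{CDL+LSJ-09}, with the extra tracer components.
\item[(b)] The interior of the convex hull, $\mathrm{int}\,K^{co}_\theta$, contains $0$ and is characterized by a strict inequality $e(\rb,\vb,\mathbf w,\mathbb U)<\bar h(\theta)^2$, where $e$ is a convex function. Moreover, there is a quantitative estimate: for $z\in\mathrm{int}K^{co}_\theta$ one can find a $\Lambda$-segment centered at $z$ of length at least $C(\bar h^2-|\vb|^2)$, or $C(1-|\rb|^2)$ for the tracer part.
\end{itemize}
The tracer component decouples nicely: $(\rb,\mathbf w)$ with $|\rb|=1$ and $\mathbf w=\rb\vb$ adds a two-point structure $\rb=\pm1$, and the hull condition becomes a product-type condition. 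This is the step I expect to be the main obstacle: proving the hull characterization and the uniform segment-length estimate jointly for velocity and tracer in arbitrary dimension $n$, with a time-dependent radius.

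\emph{Baire category.} I would define $X_0$ as the set of smooth subsolutions $z=(\rb,\vb,\mathbf w,\mathbb U,q)$ that solve the linear system, are supported in $\wtilde\bO$, and satisfy $z(\theta,x)\in\mathrm{int}K^{co}_{\theta}$ there. Then $z\equiv0$ belongs to $X_0$. Let $X$ be the weak-$*$ closure of $X_0$ in $L^\infty$, which is a bounded and metrizable space. The map $z\mapsto(\rb,\vb)$ into $L^2$ is Baire-1, so its points of continuity form a residual set. I would show that any continuity point satisfies $|\vb|=\bar h$ and $|\rb|=1$ a.e. in $\wtilde\bO$. Suppose not. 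Using the perturbation lemma obtained from (a) and (b), I would add localized plane waves in small cubes, exploiting the uniform continuity of $\bar h$ to freeze $\bar h$ on small time intervals. This produces $z_k\in X_0$ with $z_k\rightharpoonup z$ but $\liminf\|\vb_k\|_2^2\ge\|\vb\|_2^2+c\,(\int(\bar h^2-|\vb|^2))^2$, contradicting continuity.

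\emph{Conclusion.} Continuity points lie in $K_\theta$ a.e., so they give weak solutions with $p=q-\bar h^2/n$ inside $\wtilde\bO$ and $p=0$ outside. Since $X$ is not a single point (it contains nontrivial perturbations of $0$), the residual set is infinite. Composing with $\theta$ yields (i) and (ii). For the transport case the same argument applies with $\theta(t)=t$.
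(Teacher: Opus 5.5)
Your plan follows the paper's route: subsolutions valued in the interior of the hull, localized plane waves, the Baire-1 identity map into $\mL^2$, and an energy increment at continuity points. The step you flag as the main obstacle is routine, though. It needs no description of $\mathrm{int}\,K^{\mathrm{co}}_\theta$ by a convex function, and the paper never uses one for the joint velocity--tracer set. As in Lemma \ref{line-segment-lemma}, use Carath\'eodory to write $z=\sum_{i=1}^{N+1}\lambda_i z_i$ with $z_i\in K_\theta$, $\lambda_i>0$ and $\lambda_1=\max_i\lambda_i$. Then $z\pm\frac12\lambda_j(z_j-z_1)\in\mathrm{int}\,K^{\mathrm{co}}_\theta$ for every $j$. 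Every difference of two points of $K_\theta$ lies in $\Lambda$. To see this, take $\xi_x\neq0$ with $\xi_x\perp\vb_j-\vb_1$ (possible since $n\ge2$) and set $\xi_t=-\vb_1\cdot\xi_x$. Since the $\frac{\bar h^2}{n}I$ parts cancel, the momentum, divergence and tracer rows reduce to $\vb_j\,(\vb_j-\vb_1)\cdot\xi_x$, $(\vb_j-\vb_1)\cdot\xi_x$ and $\rb_j\,(\vb_j-\vb_1)\cdot\xi_x$. All three vanish, whether or not $\rb_j=\rb_1$. Now choose $j$ to maximize $\lambda_i|(\vb_i-\vb_1,\rb_i-\rb_1)|$. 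This gives $|(\vb-\vb_1,\rb-\rb_1)|\le N\lambda_j|(\vb_j-\vb_1,\rb_j-\rb_1)|$, and the bounds $|\vb-\vb_1|\ge\bar h-|\vb|$ and $|\rb-\rb_1|\ge1-|\rb|$ yield the length estimate.

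Beyond that, four points in your write-up need repair.

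First, your $X_0$ does not constrain $q$. Then $X$ is not bounded in $\mL^\infty$, so it is neither weak-$*$ compact nor metrizable, and Baire's theorem is unavailable. Impose $|q|<1$, which is the factor $[-1,1]$ in $\K_t$. This is harmless because the segments have zero $q$-component.

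Second, your increment controls only $\|\vb\|_2^2$. At a continuity point it forces $|\vb|=\bar h$ but says nothing about $|\rb|$. Run it instead for $\|\vb\|_2^2+\|\rb\|_2^2$, with deficit $\int((\bar h^2+1)-|\vb|^2-|\rb|^2)$ and the combined bound on $|(\overline{\vb},\overline{\rb})|$, as the paper does.

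Third, ``$X$ is not a single point'' does not make a residual set infinite. Use that $X_0$ is convex and contains $0$ and a nonzero element, so $X$ is infinite; a dense subset of an infinite metric space is then infinite.

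Fourth, with $\bar h=h\circ\theta^{-1}$ you obtain $|\vb(\theta(t),x)|=h(t)$, not the literal $h(\theta(t))$ in (i). The paper sidesteps the time change by running everything in the $\theta$-variable with profile $h$ (Remark \ref{Rmk-abuse-2}). Your reading is the one consistent with $\wtilde h=\theta' h$ in Theorem \ref{Main-result-Stochastic-PDE}. Since your deterministic statement allows any continuous profile bounded below, the literal version follows by taking $\bar h=h$, provided $h$ is defined on $[0,\theta(T)]$.
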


\begin{remark}\label{Rmk-abuse-2}
For notational convenience, we use $t$ in place of $\theta$ up to Section \ref{Sec-Baire category}. This change of notation is not needed for the PDEs associated with the transport noise setting, see \eqref{RDDEE-T}, since no time transformation is involved in converting the SPDE into a random PDE. 
\end{remark}

\begin{remark}
	
Note that we are dealing with a random PDE, so the energy function $h(\cdot)$ may depend on paths of the Brownian motion $B$. For almost every fixed path in the underlying probability space, this dependence is deterministic, which is the setting considered in Corollary \ref{Cor-h}.
\end{remark}

\begin{remark}\label{Rmk-weak-c}
From classical results, see \cite[Theorem]{RT-01} or \cite[Lemma 8]{CDL+LSJ-10}, it follows that the distributional solution of \eqref{RDDEE-T} (or \eqref{RDDEE}) can be redefined on a set of measure zero in such a way that $\vb\in \mathrm{C}_{w}([0,\infty); \mL^2(\R^n; \R^n))$. 
\end{remark}

The proof of Theorem \ref{Main-result-Stochastic-PDE} follows directly from that of Theorem \ref{Main-result-Random-PDE}, combined with a reverse translation by Brownian motion in the transport noise case and a time inversion in the linear multiplicative case; see Subsection \ref{Sec-Rescale}. Consequently, the remainder of the paper, up to Subsection \ref{Sec-Rescale}, is devoted to establishing Theorem \ref{Main-result-Random-PDE}.


\subsection{Geometric setup}\label{Sec-Geo setup}
In this subsection, we collects geometric tools needed for the Baire category method as well as convex integration framework.
First note that, the incompressible Euler system with a passive tracer \eqref{RDDEE} can be naturally rewritten in Tartar's framework \cite{LT-79}, i.e.,  we can rewrite the system \eqref{RDDEE} as the following system of differential equations:
\begin{equation}\label{DE}
\left\{\begin{aligned}
& \vb_t + \mathrm{div \, }\bz + \nabla q = \0,\\
& \rb_t + \mathrm{div \, } \Eta = 0,\\
& \mathrm{div \, } \vb = 0,
\end{aligned}\right.
\end{equation}
with
\begin{align*}
q = p + \frac{\abs{\vb}^2}{n},\ \bz = \vb\otimes \vb - \frac{\abs{\vb}^2}{n}  I_n\ \text{ and } \Eta = \rb\vb,
\end{align*}
where $\bz(\cdot,\cdot) = \bz$ belongs to the space of symmetric trace-free $n\times n$ real matrices, denoted by 
\begin{equation}\label{eqn-S_0^n}
\matr_0^n := \left\{ A \in M_{n\times n}(\R) : A= A^\top\ \text{ and }\ \Tr A=0 \right\}.
\end{equation}
Motivated from \cite{CDL+LSJ-10}, let $h\in C([0,T])$ be a positive function that is bounded away from zero. Let us fix $t$ for a.e. $t\in [0,T]$. Then define the graph, or constraint set,  by:
$$\K_t := K_t \times [-1,1],$$ 
where
\begin{align}
K_t := \Big\{&(\wtilde{\rb}, \wtilde{\Eta}, \wtilde{\vb}, \wtilde{\bz}) \in \{-1,1\}\times \R^n\times\R^n\times\matr_0^n :\\
&\quad\wtilde{\bz}= \wtilde{\vb}\otimes \wtilde{\vb} - \frac{|\wtilde{\vb}|^2}{n}I_n,\ |\wtilde{\vb}| = h(t) \ \text{ and }\ \wtilde{\Eta} = \wtilde{\rb} \wtilde{\vb} \Big\}.\label{K}
\end{align}
\begin{remark}
Observe that, in definition of $K_t$, for a.e. fixed $t\in [0,T]$, we consider $h(t) >0$ and bounded away from $0$. So that the perturbation depends on the size of $K_t$ is non-vanishing.
\end{remark}
Next, we introduce an essential open set that will be used in the subsequent analysis.
\begin{equation}\label{eqn-U}
\U_t := \mathrm{int} (\K_t^\mathrm{co}) = \mathrm{int} (K_t^{\mathrm{co}} \times [-1,1]) \subset \mathbb{R} \times \mathbb{R}^{n} \times \mathbb{R}^{n} \times \mathbb{R}^{n \times n} \times \mathbb{R},
\end{equation}
where $K_t^{\mathrm{co}}$ represents the convex hull of $K_t$, and int stands for the topological interior of the set in $\R \times \R^n \times \R^n \times \matr_0^n \times \R$. Note that in \cite{CAB+CMFL+JHLN-15}, the authors considered the set $K=K_t$ with the $h(t)=1$, only for $n=2$.

A trivial, yet important observation, is the following result.

\begin{proposition}\label{prop-U}
For almost each fixed $t\ge0$, the set $\U_t$ is a bounded subset of $\mathbb{R} \times \mathbb{R}^{n} \times \mathbb{R}^{n} \times \mathbb{R}^{n \times n} \times \mathbb{R}$.
\end{proposition}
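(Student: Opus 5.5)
The plan is to bound $K_t$ itself and then pass to its convex hull and interior. Fix $t\in[0,T]$ at which $h(t)$ is defined. Since $h\in C([0,T])$, it is bounded: set $M:=\max_{[0,T]} h<\infty$. For any $(\wtilde{\rb}, \wtilde{\Eta}, \wtilde{\vb}, \wtilde{\bz})\in K_t$, the constraints defining $K_t$ in \eqref{K} give the following bounds.

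The tracer component satisfies $|\wtilde{\rb}|=1$. The velocity satisfies $|\wtilde{\vb}|=h(t)\le M$. The flux satisfies $|\wtilde{\Eta}|=|\wtilde{\rb}||\wtilde{\vb}|=h(t)\le M$. For the matrix component, using the Frobenius norm,
\begin{equation*}
|\wtilde{\bz}|\le |\wtilde{\vb}\otimes\wtilde{\vb}| + \frac{|\wtilde{\vb}|^2}{n}|I_n| = h(t)^2\Big(1+\frac{1}{\sqrt n}\Big)\le 2M^2 .
\end{equation*}
Hence $K_t$ is contained in the closed ball $\overline{B}_R$ of the finite-dimensional space $\R\times\R^n\times\R^n\times\matr_0^n$, with radius $R:=(1+2M^2+4M^4)^{1/2}$, a bound uniform in $t$. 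Consequently $\K_t=K_t\times[-1,1]\subset \overline{B}_R\times[-1,1]$.

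The set $\overline{B}_R\times[-1,1]$ is convex and bounded. Since the convex hull is the smallest convex set containing $\K_t$, we get $\K_t^{\mathrm{co}}\subset \overline{B}_R\times[-1,1]$. Taking interiors only shrinks the set, so $\U_t=\mathrm{int}(\K_t^{\mathrm{co}})$ is bounded, with a bound independent of $t$.

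No step is a real obstacle. The only points needing care are bookkeeping. First, the identity $\K_t^{\mathrm{co}}=K_t^{\mathrm{co}}\times[-1,1]$ used in \eqref{eqn-U} is not needed, since containment in a convex superset suffices. Second, the ambient space in \eqref{eqn-U} is written with $\R^{n\times n}$, while the interior is taken in $\matr_0^n$. Boundedness is insensitive to this choice because $\matr_0^n\subset\R^{n\times n}$ carries the induced norm. The positivity lower bound on $h$ plays no role here; it matters only for nondegeneracy of the interior later.
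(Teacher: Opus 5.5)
Your proof is correct and follows the same route as the paper. The paper's proof only notes that $K_t$ is bounded and leaves the passage to $\K_t^{\mathrm{co}}$ and its interior implicit; your explicit radius $R=(1+2M^2+4M^4)^{1/2}$ and the convex-superset argument supply exactly those omitted details.
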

\begin{proof}
Let us fix a.e. $t\ge0$. Then, 
the proof is immediate from the fact the that set $K_t$ is bounded.
\end{proof}

\begin{lemma}\label{Lem-U}
For a.e. fixed $t\ge0$, $\0 \in \U_t$. Thus, $\U_t$ is non-empty.
\end{lemma}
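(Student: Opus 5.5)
The plan is to reduce the statement to the classical fact of De Lellis and Sz\'ekelyhidi \cite{CDL+LSJ-09, CDL+LSJ-10} for the pure Euler constraint set, and then lift it to the tracer variables by a symmetrisation trick. Fix $t$ with $h(t)>0$. Since $\U_t=\mathrm{int}(K_t^{\mathrm{co}}\times[-1,1])=\mathrm{int}(K_t^{\mathrm{co}})\times(-1,1)$ and $0\in(-1,1)$, it suffices to show that $\0\in\mathrm{int}(K_t^{\mathrm{co}})$ in $\R\times\R^n\times\R^n\times\matr_0^n$. I will use the description of $K_t^{\mathrm{co}}$ as the set of barycenters of probability measures supported on $K_t$ (finitely supported ones suffice).

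\textbf{Step 1 (the Euler part).} Let $S_h:=\{(\wtilde\vb,\wtilde\vb\otimes\wtilde\vb-\frac{h^2}{n}I_n):|\wtilde\vb|=h\}\subset\R^n\times\matr_0^n$. The scaling $\wtilde\vb\mapsto h\wtilde\vb$ maps $S_1$ onto $S_h$ linearly (it multiplies the matrix component by $h^2$). It is known from \cite[Lemma 3]{CDL+LSJ-09} (see also \cite{CDL+LSJ-10}) that $0\in\mathrm{int}(S_1^{\mathrm{co}})$. The quickest route to this is that the uniform measure on the sphere has barycenter $(0,\frac1nI_n-\frac1nI_n)=0$, together with the fact that $S_1$ is not contained in any affine hyperplane. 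Hence there is $\delta>0$ such that every $(w,\zeta)$ with $|w|+|\zeta|<\delta$ is the barycenter of some probability measure $\mu_{w,\zeta}$ on the sphere $\{|\wtilde\vb|=h\}$, in the sense that
\begin{equation*}
\int \wtilde\vb\,\d\mu_{w,\zeta}=w,\qquad \int\Big(\wtilde\vb\otimes\wtilde\vb-\tfrac{h^2}{n}I_n\Big)\d\mu_{w,\zeta}=\zeta .
\end{equation*}

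\textbf{Step 2 (lifting to the tracer components).} Given such a $\mu=\mu_{w,\zeta}$, I push it forward to measures on $K_t$ in three ways.
\begin{itemize}
\item[(a)] Place weight $\frac12$ at $\wtilde\rb=1$ and weight $\frac12$ at $\wtilde\rb=-1$, with the same $\wtilde\vb$. This gives the barycenter $(0,0,w,\zeta)$, because the $\wtilde\Eta=\wtilde\rb\wtilde\vb$ contributions cancel.
\item[(b)] Place weight $\frac12$ at $(1,\wtilde\vb,\wtilde\vb,\cdot)$ and weight $\frac12$ at $(-1,\wtilde\vb,-\wtilde\vb,\cdot)$, where the matrix entry is unchanged under $\wtilde\vb\mapsto-\wtilde\vb$. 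Taking $\mu=\mu_{w,0}$, this gives the barycenter $(0,w,0,0)$.
\item[(c)] Take $\wtilde\rb\equiv\pm1$ with $\mu=\mu_{0,0}$. This gives $(\pm1,0,0,0)$.
\end{itemize}
Thus $K_t^{\mathrm{co}}$ contains $\pm\epsilon e$ for every element $e$ of an orthonormal basis of the ambient space $\R\times\R^n\times\R^n\times\matr_0^n$ of dimension $N$, with $\epsilon=\delta/2$.

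\textbf{Step 3 (conclusion).} By convexity, $K_t^{\mathrm{co}}$ contains the cross-polytope spanned by the points $\pm\epsilon e$, and this contains the ball of radius $\epsilon/\sqrt N$ centred at $\0$. Hence $\0\in\mathrm{int}(K_t^{\mathrm{co}})$, and therefore $\0\in\U_t$.

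The main obstacle is Step 1, namely that the Euler constraint set alone has $0$ in the interior of its hull, i.e. that it spans $\R^n\times\matr_0^n$ affinely. I would simply cite De Lellis--Sz\'ekelyhidi for this, rescaled by $h(t)$. If a self-contained argument is wanted, I would show that no nonzero linear functional $(a,A)$ with $\Tr A=0$ can satisfy $a\cdot\wtilde\vb+\wtilde\vb^\top A\wtilde\vb\le0$ on the whole sphere. Indeed, averaging this inequality over $\pm\wtilde\vb$ forces $A\le0$, and $\Tr A=0$ then gives $A=0$; after that $a\cdot\wtilde\vb\le0$ on the sphere forces $a=0$.
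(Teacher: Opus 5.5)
Your proof is correct, and it takes a different route from the paper in both of its halves.

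\textbf{Euler part.} The paper adapts De Lellis--Sz\'ekelyhidi. It introduces the linear map $\T_t:\Cb(\sphere_t^{n-1})\to\R^n\times\matr_0^n$, $\phi\mapsto\int(\vb,\vb\otimes\vb-\frac{h^2(t)}{n}I_n)\phi\,d\nu_t$. It uses Jensen's inequality for the convex function $e$ to show that probability densities are mapped into the hull, and rotational invariance to get $\T_t(1)=0$. It proves surjectivity by computing $\T_t(v_i)$, $\T_t(v_iv_j)$ and $\T_t(v_i^2-h^2(t)/n)$ explicitly. Finally it invokes the open mapping theorem, so that the images of the densities $1+\psi$ with $\psi$ small fill a neighbourhood of $0$.

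Your supporting-hyperplane criterion replaces all of this with a few lines of finite-dimensional convex duality. A nonzero $(a,A)$ with $A\in\matr_0^n$ cannot be $\le 0$ on the sphere: averaging over $\pm\wtilde{\vb}$ forces $A\le 0$, then $\Tr A=0$ gives $A=0$, and then $a=0$. This needs no moment computations and no infinite-dimensional functional analysis.

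\textbf{Tracer part.} Here your argument is more complete than the paper's. The paper only says that, since $0\in(-1,1)$, it suffices to treat the projection onto the $(\vb,\bz)$ variables, and it never addresses the $(\rb,\Eta)$ directions. That reduction is not automatic, because $K_t$ imposes the coupling $\wtilde{\Eta}=\wtilde{\rb}\,\wtilde{\vb}$. Had the constraint been $\wtilde{\Eta}=\wtilde{\vb}$, the hull would lie in a proper subspace and have empty interior. Your lifts (a)--(c) use the sign symmetries $(\wtilde{\rb},\wtilde{\vb})\mapsto(-\wtilde{\rb},\pm\wtilde{\vb})$ of $K_t$ and supply exactly this missing step. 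Your cross-polytope argument then also gives the explicit radius $\delta/(2\sqrt N)$.

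\textbf{Small fix.} Shrink $\delta$ so that $\epsilon=\delta/2\le 1$ before interpolating between $(\pm1,0,0,0)$ to obtain $(\pm\epsilon,0,0,0)$.

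\textbf{Trade-off.} The paper's route gives a concrete parametrisation of a neighbourhood of $0$ by densities on the sphere. Yours is shorter and actually covers the full space $\R\times\R^n\times\R^n\times\matr_0^n$.
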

The proof is deferred to the appendix, see Subsection \ref{Subsec-U}.

\begin{remark}\label{U-nonempty}
It is clear from \cite[Lemma 2.1]{CDL+LSJ-09} that any solution $(\rb, \Eta, \vb, \bz, q)$ of  the system \eqref{DE} whose image contained in $\K_t$ is a solution of the system \eqref{RDDEE}.
\end{remark}

Let us now introduce the following $(n+2)\times (n+1)$ matrix field:
\begin{align}\label{matrix-form}
U = \begin{pmatrix}
\bz + q I_n & \vb\\
\vb^\top & 0\\
\Eta^\top & \rb
\end{pmatrix}
\end{align}
and a new coordinate system $y = (t,x)\in
[0,\infty)\times \R^n$, where $x=(x_1, \cdots, x_n) \in\R^n$. In this setting, the equation \eqref{DE} reduces to
\begin{align}
\mathrm{div}_y\, U = \0.\label{Tartar-form}
\end{align}
Next, we set $\M_{(n+1)\times (n+1)}$ to be the set of \textit{symmetric} $(n+1)\times(n+1)$ matrices $Q$ such that $Q_{n+1,n+1} =0$ and suppose $\M_{(n+2)\times (n+1)}$ is the set of $(n+2)\times (n+1)$ matrices $A$ such that $(A_{i,j})_{1\leq i, j \leq n+1}\in \M_{(n+1)\times(n+1)}$. Observe that the following linear maps, motivated from \cite[Section 2]{CAB+CMFL+JHLN-15}, are isomorphisms:
\begin{align}
\R^n \times \matr_0^n \times \R \ni (\vb, \bz, q) &\mapsto \begin{pmatrix}
\bz + q I_n & \vb\\
v^\top & 0
\end{pmatrix} \in \M_{(n+1)\times(n+1)} ,\\
\R\times \R^n \ni (\rb, \Eta) &\mapsto \begin{pmatrix}
\Eta^\top & \rb
\end{pmatrix}\in  \R^{n+1}  ,\\
\R \times\R^n \times \R^n \times \matr_0^n \times \R \ni  (\rb, \Eta, \vb, \bz, q) &\mapsto \begin{pmatrix}
\bz + q I_n & \vb\\
\vb^\top & 0\\
\Eta^\top & \rb
\end{pmatrix} \in \M_{(n+2)\times(n+1)}.
\end{align}


\subsubsection{Plane wave solution} A plane wave solution of \eqref{Tartar-form} is a solution $U$, as in \eqref{matrix-form}, of the form
\begin{align}
U = U(y) = A h(y\cdot \xi), \label{plane-wave-soln}
\end{align}
where $h:\R \to \R$ and $A\in \M_{(n+2)\times(n+1)}$. Then, the \textit{wave cone is} the set of plane wave solution of \eqref{Tartar-form} for any $h$.
In our case, the wave cone is given by
\begin{align*}
\Lambda := \{A\in\M_{(n+2)\times(n+1)} : \mbox{there exists } \xi\in \R^{n+1} \backslash\{\0\} \text{ such that } A\xi = \0\},
\end{align*}
or, equivalently,
\begin{align}
\Lambda = \Bigg\{ &(\wtilde{\rb}, \wtilde{\Eta}, \wtilde{\vb}, \wtilde{\bz}, \wtilde{q})  \in \R\times \R^n\times\R^n\times\matr_0^n \times \R\\
& : \mbox{there exists } \xi \in \R^{n+1} \backslash \{\0\} \text{ such that }  \begin{pmatrix}
\wtilde{\bz} + \wtilde{q} I_n & \wtilde{\vb}\\
\wtilde{\vb}^\top & 0\\
\wtilde{\Eta}^\top & \wtilde{\rb}
\end{pmatrix}\xi = \0 \Bigg\}.\label{eqn-wave-cone}
\end{align}

As noted in Lemma \ref{Lem-U}, that $\0\in \U_t$, the next lemma demonstrate that the set $\U_t$ in fact contains an entire line segment. It can be viewed as a generalized result of \cite[Lemma 2.1]{CAB+CMFL+JHLN-15}, finding $\xi$ \eqref{eqn-xi} in the proof can be considered as one of the difficulty in this result, extending the result from the case $n=2$ to arbitrary dimension $n\geq2$.

\begin{lemma}\label{line-segment-lemma}
There exists a dimensional constant $C>0$ such that for each $(\rb, \Eta, \vb, \bz, q)\in\U_t$, there exists $(\overline{\rb}, \overline{\Eta}, \overline{\vb}, \overline{\bz}) \in \R\times \R^n\times\R^n\times\matr_0^n$ satisfying
\begin{itemize}
\item[(i)] $(\overline{\rb}, \overline{\Eta}, \overline{\vb}, \overline{\bz}, 0)\in \Lambda$;
\item[(ii)] the line segment with endpoints $(\rb, \Eta, \vb, \bz, q) \pm (\overline{\rb}, \overline{\Eta}, \overline{\vb}, \overline{\bz}, 0)\in \U_t$;
\item[(iii)]
$
|(\overline{\vb}, \overline{\rb})| \geq C ((h^2(t) + 1) - (\abs{\vb}^2 + \abs{\rb}^2))\ \text{ with }\ C = \frac{1}{4N\sqrt{2}}.$
\end{itemize}
\end{lemma}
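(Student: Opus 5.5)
We follow the scheme of \cite[Lemma 2.1]{CAB+CMFL+JHLN-15} and \cite[Lemma 4.3]{CDL+LSJ-09}. The strategy has two parts: first represent a point of $\U_t$ as a convex combination of points of $\K_t$ (Carathéodory), and then show that some pair of those points differs by a large element of the wave cone $\Lambda$. Since the $q$-component of $\U_t$ ranges over the factor $(-1,1)$ and plays no role in $K_t$, we work with $w:=(\rb,\Eta,\vb,\bz)\in\mathrm{int}\,K_t^{\mathrm{co}}$ and keep $q$ fixed. Let $N$ be the dimension of the ambient space $\R\times\R^n\times\R^n\times\matr_0^n$. Carathéodory's theorem gives
\[
w=\sum_{i=1}^{N+1}\lambda_i w_i,\qquad w_i=(\rb_i,\rb_i\vb_i,\vb_i,\vb_i\otimes\vb_i-\tfrac{h^2}{n}I_n)\in K_t,
\]
with $\lambda_i\ge0$, $\sum_i\lambda_i=1$, and the $\lambda_i$ ordered decreasingly, so that $\lambda_1\ge 1/(N+1)$.

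\textbf{Step 1: lower bound on the spread of the points.} Since $|\vb_i|^2+\rb_i^2=h^2(t)+1$ for every $i$, strict convexity gives
\[
(h^2+1)-(|\vb|^2+\rb^2)=\sum_i\lambda_i\bigl(|\vb_i-\vb|^2+|\rb_i-\rb|^2\bigr)\le \max_i|(\vb_i-\vb,\rb_i-\rb)|\,\sqrt{2}\,\sqrt{h^2+1}.
\]
We would normalize this via the boundedness of $K_t$ (Proposition \ref{prop-U}). From it we can pick an index $j$ with $|(\vb_j-\vb_1,\rb_j-\rb_1)|$ comparable to the left-hand side; how the bound is normalized is what determines the constant $1/(4N\sqrt2)$.

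\textbf{Step 2: the wave-cone condition, which is the main obstacle.} We must show that for any two points $w_a,w_b\in K_t$ with $|\vb_a|=|\vb_b|=h(t)$, the difference $w_a-w_b$, completed by a suitable $\wtilde q$, belongs to $\Lambda$; that is, we need $\xi=(\xi_x,\xi_t)\neq\0$ with
\[
(\wtilde\bz+\wtilde qI_n)\xi_x+\wtilde\vb\,\xi_t=\0,\qquad \wtilde\vb\cdot\xi_x=0,\qquad \wtilde\Eta\cdot\xi_x+\wtilde\rb\,\xi_t=0.
\]
Here $\wtilde\bz=\vb_a\otimes\vb_a-\vb_b\otimes\vb_b$, because the trace parts cancel thanks to $|\vb_a|=|\vb_b|$. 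The pure-Euler choice is $\xi_x\perp\vb_a,\vb_b$, which exists when $n\ge3$; then the first two equations hold with $\xi_t=0$ and $\wtilde q=0$, but the tracer equation additionally forces $\wtilde\Eta\cdot\xi_x=0$, which is automatic because $\wtilde\Eta\in\mathrm{span}\{\vb_a,\vb_b\}$. For $n=2$, and for a uniform treatment in general, we would instead take $\xi_x$ in the plane spanned by $\vb_a,\vb_b$ with $\xi_x\perp(\vb_a-\vb_b)$, i.e. $\xi_x\parallel\vb_a+\vb_b$. We then solve for $\xi_t$ and $\wtilde q$ explicitly, casing on whether $\rb_a=\rb_b$ or $\rb_a=-\rb_b$; in the latter case $\vb_a\pm\vb_b$ plays the role of $\xi_x$. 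This yields the explicit $\xi$ in \eqref{eqn-xi}. If $\wtilde q$ turns out nonzero, we must check that the statement's requirement of last coordinate $0$ is still compatible, presumably by absorbing $\wtilde q$ through the freedom in the $[-1,1]$ factor, or by showing that $\wtilde q=0$ is admissible for our choice of $\xi$.

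\textbf{Step 3: conclusion.} Set $\overline w:=\tfrac12\lambda_{j}(w_j-w_1)$. Then $w\pm\overline w$ is again a convex combination of the $w_i$: moving weight $\lambda_j/2$ between $w_1$ and $w_j$ keeps all coefficients nonnegative because $\lambda_1\ge\lambda_j$. To land in the interior we slightly shrink the segment toward $w$, using openness of $\U_t$ (Lemma \ref{Lem-U}); the segment is then in $\U_t$ together with the unchanged $q$, which gives (ii). Item (i) follows from Step 2. For (iii), the index $j$ is chosen to maximize $\lambda_i|(\vb_i-\vb_1,\rb_i-\rb_1)|$, and combining this choice with Step 1 gives the lower bound with constant $C=1/(4N\sqrt2)$.
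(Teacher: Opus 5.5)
Your skeleton is the paper's: Carath\'eodory, $\overline{w}=\tfrac12\lambda_j(w_j-w_1)$ with $j$ maximizing $\lambda_i|(\vb_i-\vb_1,\rb_i-\rb_1)|$, and an explicit $\xi$. However, the two substantive steps are never carried out.

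\textbf{Item (i).} In Step 2 you never exhibit $\xi$, and you leave open whether $\wtilde{q}=0$. Absorbing $\wtilde{q}$ into the $[-1,1]$ factor is not an option, because (i) requires the direction itself to have zero $q$-component. One observation makes the step immediate, with no case distinction on $\rb_a=\pm\rb_b$. With $\wtilde{q}=0$, the momentum rows read $\vb_a(\vb_a\cdot\xi_x+\xi_t)-\vb_b(\vb_b\cdot\xi_x+\xi_t)=\0$ and the tracer row reads $\rb_a(\vb_a\cdot\xi_x+\xi_t)-\rb_b(\vb_b\cdot\xi_x+\xi_t)=0$. So take any $\xi_x\neq\0$ with $\xi_x\perp(\vb_a-\vb_b)$, which exists for $n\ge2$, and set $\xi_t:=-\vb_b\cdot\xi_x=-\vb_a\cdot\xi_x$. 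This solves every row, including the divergence row. Do not insist on $\xi_x\parallel\vb_a+\vb_b$, which degenerates when $\vb_b=-\vb_a$. This is exactly what \eqref{eqn-xi} encodes, with $\vb_a=\vb_j$ and $\vb_b=\vb_1$.

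\textbf{Item (iii).} Your Step 1 bounds the unweighted $\max_i|(\vb_i-\vb,\rb_i-\rb)|$. But $|(\overline{\vb},\overline{\rb})|=\tfrac12\lambda_j|(\vb_j-\vb_1,\rb_j-\rb_1)|$ carries the weight $\lambda_j$, which can be tiny, so the ``combining'' you assert is precisely the missing step. The paper's link is $w-w_1=\sum_{i\ge2}\lambda_i(w_i-w_1)$, which gives $|(\vb-\vb_1,\rb-\rb_1)|\le N\lambda_j|(\vb_j-\vb_1,\rb_j-\rb_1)|=2N|(\overline{\vb},\overline{\rb})|$. This is combined with $|\vb-\vb_1|\ge h(t)-|\vb|$, $|\rb-\rb_1|\ge 1-|\rb|$, and the factorization $(h-|\vb|)(h+|\vb|)+(1-|\rb|)(1+|\rb|)$ of the deficit. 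Your variance identity can be closed the same way. Write $W_i=(\vb_i,\rb_i)$ and $W=(\vb,\rb)$. Then $\sum_i\lambda_i|W_i-W|\le 2\sum_{i\ge2}\lambda_i|W_i-W_1|\le 4N|(\overline{\vb},\overline{\rb})|$ and $|W_i-W|\le 2\sqrt{h^2+1}$ (not $\sqrt2\sqrt{h^2+1}$), giving $C=1/(8N\sqrt{h^2+1})$.

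\textbf{The stated constant.} For unrestricted $h$, no argument can give the dimension-only constant $1/(4N\sqrt2)$. At $0\in\U_t$, any admissible direction has $|\overline{\vb}|\le h$ and $|\overline{\rb}|\le 1$. Hence $|(\overline{\vb},\overline{\rb})|\le\sqrt{h^2+1}$, which is less than $(h^2+1)/(4\sqrt2N)$ once $h^2+1>32N^2$. The paper reaches $1/(4N\sqrt2)$ only by bounding $h+|\vb|\le 2$, i.e.\ implicitly assuming $h\le1$. In general one gets $C=1/(4\sqrt2N\max\{h,1\})$. This is uniform because $h$ is bounded, and that is all Lemma \ref{Convergence-lemma} needs.

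\textbf{Item (ii).} No shrinking is needed. You have $w\pm2\overline{w}\in K_t^{\mathrm{co}}$ (this is where $\lambda_1\ge\lambda_j$ enters). Points on the open segment between an interior point and a point of a convex set are interior, so $w\pm\overline{w}$ already lie in the interior. Shrinking would also cost you the exact constant.
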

\begin{proof}
\textbf{Step 1.} Let  us choose and take $z= (\rb, \Eta, \vb, \bz) \in \mathrm{int} \, K_t^{\mathrm{co}}$,  see \eqref{K} for $K_t$. Then, by Carath\'eodary's Theorem, see \cite[Theorem 17.1]{RTR-70},
the element $(\rb, \Eta, \vb, \bz)$ lies in the interior of a simplex in $\R\times \R^n\times\R^n\times\matr_0^n$ spanned by elements of $K_t$.
Therefore, we can write $z$ as
\[z := \sum_{i=1}^{N+1} \lambda_{i}z_i,\]
where for each $1\leq i\leq N+1$, $\lambda_i \in (0,1), \, z_i
\in K_t,\,  \sum\limits_{i=1}^{N+1} \lambda_i = 1$ and
\begin{equation}
N = 1+n+n+ \frac{n(n+1)}{2} -1 = \frac{n(n+5)}{2}\label{eqn-N}
\end{equation}
is the dimension of $\R\times \R^n\times\R^n\times\matr_0^n$. Assume that the coefficients are ordered so that $\displaystyle\lambda_1 = \max\limits_{1\leq i\leq N+1} \lambda_{i}$. Then, for any $j>1$, we have
\begin{align}\label{Lemma-1-1}
z \pm \frac{1}{2}\lambda_{j}(z_{j} - z_1) \in\mathrm{int}\, K_t^{\mathrm{co}}.
\end{align}
On the other hand, we write $$\displaystyle z - z_1 = \sum_{i=1}^{N+1} \lambda_{i}z_i - \sum_{i=1}^{N+1} \lambda_{i}z_1 = \sum_{i=2}^{N+1} \lambda_{i}(z_i - z_1),$$ 
in particular, by triangle inequality, we find
\begin{align}
\abs{(\vb - \vb_1, \rb - \rb_1)} & = \abs{\sum_{i=2}^{N+1} \lambda_{i}(\vb_j - \vb_1, \rb_j - \rb_1)}\\
& \leq \sum_{i=2}^{N+1} \sqrt{ \lambda_{i}^2(\abs{\vb_i - \vb_1}^2 + \abs{\rb_i - \rb_1}^2 )}\\
& \leq N \sqrt{\max_{2\leq i\leq N+1} \{\lambda_{i}^2(\abs{\vb_i - \vb_1}^2 + \abs{\rb_i - \rb_1}^2)\}}. \label{Lemma-1-2}
\end{align}
Now, let us choose and fix $j>1$ be such that 
\begin{align*}
\lambda_{j}^2(\abs{\vb_j - \vb_1}^2 + \abs{\rb_j - \rb_1}^2) := \max_{2\leq i\leq N+1} \{\lambda_{i}^2(\abs{\vb_i - \vb_1}^2 + \abs{\rb_i - \rb_1}^2)\}.
\end{align*}
Then, for each $i=2,\ldots,N+1$, the inequality \eqref{Lemma-1-2} becomes
\begin{align}\label{Lemma-1-3}
\abs{\vb_i - \vb_1}^2 + \abs{\rb_i - \rb_1}^2 \leq N^2\, \lambda_{j}^2(\abs{\vb_j - \vb_1}^2 + \abs{\rb_j - \rb_1}^2).
\end{align}

\noindent
\textbf{Step 2.} Next, let us suppose that
\begin{align*}
(\overline{\rb}, \overline{\Eta}, \overline{\vb}, \overline{\bz}) = \frac{1}{2}\lambda_j(z_j - z_1),
\end{align*}
in particular, we assert
\begin{align*}
\overline{\vb} = \frac{1}{2}\lambda_j(\vb_j - \vb_1)\ \text{ and }\ \overline{\rb}= \frac{1}{2}\lambda_j(\rb_j - \rb_1).
\end{align*}
Then, we calculate
\begin{align*}
(\rb, \Eta, \vb, \bz) \pm 2(\overline{\rb}, \overline{\Eta}, \overline{\vb}, \overline{\bz}) & = z \pm \lambda_{j}(z_{j} - z_1)\\
& = \left(\lambda_1 \mp \lambda_{j}\right) z_1+\lambda_2z_2 + \dots +\left( \lambda_j \pm  \lambda_j \right)z_j + \dots + \lambda_{N+1}z_{N+1}\\
& = \sum_{i=1}^{N+1}\mu_{i} z_i,
\end{align*}
where $$\mu_i = \begin{cases}
\lambda_i \mp \lambda_j,  \text{ for } i = 1,\\
\lambda_i \pm \lambda_j,  \text{ for } i = j,\\
\lambda_j,  \text{ for } i = 2,\dots,j-1, j+1, \dots, N+1.
\end{cases}$$
Since $\mu_i\in(0,1)$, for all $1\leq i\leq N+1$ and $\displaystyle \sum_{i=1}^{N+1}\mu_{i} = 1$, it follows that 
\begin{equation*}
(\rb, \Eta, \vb, \bz) \pm 2(\overline{\rb}, \overline{\Eta}, \overline{\vb}, \overline{\bz}) \in K_t,
\end{equation*}
which further implies
\begin{align*}
\left((\rb, \Eta,\vb,\bz) \pm 2(\overline{\rb}, \overline{\Eta}, \overline{\vb}, \overline{\bz})\right)^{\mathrm{co}} \subset K_t^{\mathrm{co}}.
\end{align*}
Thus, the line segment with endpoints $(\rb, \Eta, \vb, \bz) \pm (\overline{\rb}, \overline{\Eta}, \overline{\vb}, \overline{\bz})$
is contained in the $\mathrm{int}\, K_t^{\mathrm{co}}$. Hence, the line segment $(\rb, \Eta, \vb, \bz, q) \pm (\overline{\rb}, \overline{\Eta}, \overline{\vb}, \overline{\bz}, 0)$ is contained in $\U_t$, which proves (ii).
\vskip 1mm
\noindent
\textbf{Step 3.} It remains to show that $(\overline{\rb}, \overline{\Eta}, \overline{\vb}, \overline{\bz}, 0)\in\Lambda$, i.e., there exists a $\xi\in \R^{n+1} \backslash \{\0\}$ such that
\begin{align*}
\begin{pmatrix}
\overline{\bz} & \overline{\vb}\\
\overline{\vb}^\top & 0\\
\overline{\Eta}^\top & \overline{\rb}
\end{pmatrix} \xi = \frac{1}{2} \lambda_{j} \begin{pmatrix}
\vb_j \otimes \vb_j - \vb_1 \otimes \vb_1 & \vb_j - \vb_1\\
(\vb_j - \vb_1)^\top & 0\\
(\Eta_j - \Eta_1)^\top & \rb_j - \rb_1
\end{pmatrix}\xi = \0.
\end{align*}
Let us fix $\vb_j = (v_j^1, \ldots, v_j^n)$ and $\vb_1 = (v_1^1, \ldots, v_1^n)$, and then, define the non-zero vector as follows.
\begin{equation}\label{eqn-xi}
\xi := \begin{cases}
(-1, \underbrace{0, \ldots, 0,}_{(n-1)-\text{terms}} v_1^1), & \text{if } v_j^1 = v_1^1,\\
\Bigg(-\displaystyle{\frac{ \sum\limits_{i=2}^{n} (v_j^i - v_1^i) }{v_j^1 - v_1^1}, \underbrace{1, \ldots, 1,}_{(n-1)-\text{terms}} - \frac{ \sum\limits_{i=2}^{n} (v_j^1v_1^i - v_1^1 v_j^i) }{v_j^1 -v_1^1}\Bigg)}, & \text{if } v_j^1 \ne v_1^1,\\
\end{cases}
\end{equation}
so that
\begin{align*}
\begin{pmatrix}
(v_j^1)^2 - (v_1^1)^2 & \cdots & v_j^1 v_j^n - v_1^1 v_1^n & v_j^1 - v_1^1\\
\vdots & \ddots & \vdots & \vdots\\
v_j^n v_j^1 - v_1^n v_1^1 & \cdots & (v_j^n)^2 - (v_1^n)^2 & v_j^n - v_1^n\\
v_j^1 - v_1^1 & \cdots & v_j^n- v_1^n & 0\\
\rb_j v_j^1 - \rb_1 v_1^1& \cdots & \rb_j v_j^n - \rb_1 v_1^n & \rb_j - \rb_1
\end{pmatrix} \xi = \0.
\end{align*}
This implies $(\overline{\rb}, \overline{\Eta}, \overline{\vb}, \overline{\bz}, 0)\in\Lambda$. It completes the proof of part (i).

\noindent
\textbf{Step 4.} Finally, note from the definition of $K_t$, see \eqref{K}, that 
\begin{equation*}
\abs{\vb-\vb_1} \geq \abs{\vb_1} - \abs{\vb} = h(t) - \abs{\vb},\ \mbox{ for }\ |\vb_1|= h(t) \dela{\sphere^{n-1}},
\end{equation*}
and 
\begin{equation*}
\abs{\rb-\rb_1} \geq \abs{\rb_1} - \abs{\rb} = 1 - \abs{\rb},\ \text{ for }\ \rb_1\in\{-1,1\}.
\end{equation*}
By utilizing the above inequalities along with \eqref{Lemma-1-3}, we deduce
\begin{align}
\frac{1}{4\sqrt{2}N} ((h^2(t) +1) \dela{2} - (\abs{\vb}^2 + \abs{\rb}^2))
& = \frac{1}{4\sqrt{2}N} [(h(t)\dela{1} + \abs{\vb}) (h(t)\dela{1} - \abs{\vb}) + (1 + \abs{\rb}) (1 - \abs{\rb})]\\
& \leq \frac{1}{2\sqrt{2}N} [(h(t) - \abs{\vb}) + (1 - \abs{\rb})]\\
& \leq \frac{1}{2N} \sqrt{(h(t) - \abs{\vb})^2 + (1 - \abs{\rb})^2}\\
& \leq \frac{1}{2N} \sqrt{\abs{\vb - \vb_1}^2 + \abs{\rb - \rb_1}^2}\\
& \leq \frac{N}{2N} \lambda_j \sqrt{\abs{\vb_j - \vb_1}^2 + \abs{\rb_j - \rb_1}^2 } \\
& = |(\overline{\vb}, \overline{\rb})|.
\end{align}
Hence $\displaystyle |(\overline{\vb}, \overline{\rb})| \geq C ((h^2(t)+1) - (\abs{\vb}^2 + \abs{\rb}^2))$, with $\displaystyle C = \frac{1}{4N\sqrt{2}}$, and part (iii) follows. It conclude the proof.
\end{proof}
\begin{remark} Note that to prove part (i) of Lemma \ref{line-segment-lemma}, we cannot use the result \cite[Lemma 4.3]{CDL+LSJ-09} directly to show that $(\overline{\rb}, \overline{\Eta}, \overline{\vb}, \overline{\bz}, 0)\in \Lambda$, because that approach is applicable when the matrix in \eqref{matrix-form} is symmetric.
\end{remark}

The following result shows that the wave cone $\Lambda$, defined in \eqref{eqn-wave-cone}, contains sufficiently many directions. It can be regarded as a detailed generalization of \cite[Proposition 2.2]{CAB+CMFL+JHLN-15} in dimensions $n\geq2$.

\begin{proposition}\label{Perturbation-proposition}
Let $\begin{pmatrix}
\what{\bz} + \what{q} I_n & \what{\vb}\\
\what{\vb}^\top & 0\\
\what{\Eta}^\top & \what{\rb}
\end{pmatrix} =:\what{V} \in\Lambda$ be such that $\what{V}e_{n+1} \ne \0$, and consider the line segment $\sigma$ with endpoints $-\what{V}$ and $\what{V}$ in $\M_{(n+2)\times (n+1)}$. Then, there exists a constant $\alpha>0$ such that for any $\eps>0,$ there exists a smooth divergence-free matrix field
\begin{equation*}
V: [0,\infty)\times \R^n \to \M_{(n+2)\times (n+1)}\ \mbox{ given by }\	V(y) = \begin{pmatrix}
\bz(y) + q(y) I_n & \vb(y)\\
\vb^\top(y) & 0\\
\Eta^\top(y) & \rb(y)
\end{pmatrix}
\end{equation*}
where $\bz(\cdot)\in\matr_0^n$, $\vb(\cdot),\Eta(\cdot) \in \R^n$ and $\rb(\cdot), q(\cdot)\in \R$, with the properties
\begin{enumerate}[label=(P\arabic*), ref=P\arabic*]
\item\label{P1} $\supp (V) \subset B_1(\0)$,
\item\label{P2} $\mathrm{dist} (V(y), \sigma) < \eps\ \text{ for all }\ y\in B_1(\0)$,
\item\label{P3} $\displaystyle \int_{B_1(\0)} \abs{\vb(y)} \,\mathrm{d} y \geq \alpha\abs{\what{\vb}}\ \text{ and } \ \int_{B_1(\0)} \abs{\rb(y)} \,\mathrm{d} y \geq \alpha|\what{\rb}|$,
\end{enumerate}
where $\alpha>0$ is a dimensional constant 
and 
$B_1(\0)$ denotes the open ball of radius $1$ centered at $\0$ in $[0,\infty)\times \R^n$.
\end{proposition}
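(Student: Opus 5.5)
We follow the classical localized plane-wave construction of De Lellis and Sz\'ekelyhidi \cite[Proposition 4.2]{CDL+LSJ-09}, adapted to the non-symmetric $(n+2)\times(n+1)$ structure as in \cite[Proposition 2.2]{CAB+CMFL+JHLN-15}. The construction rests on a linear potential operator. Since $\what V\in\Lambda$, there is $\xi\in\R^{n+1}\setminus\{\0\}$ with $\what V\xi=\0$. We seek a homogeneous linear differential operator $\mathcal{L}$ of some order $m$, mapping scalar functions $\phi$ on $\R^{n+1}$ into $\M_{(n+2)\times(n+1)}$, such that three properties hold:
\begin{itemize}
\item $\mathrm{div}_y\,\mathcal{L}\phi=\0$ identically;
\item the structural constraints hold: the upper $(n+1)\times(n+1)$ block of $\mathcal{L}\phi$ is symmetric with vanishing $(n+1,n+1)$ entry;
\item $\mathcal{L}(\psi(y\cdot\xi))=\what V\,\psi^{(m)}(y\cdot\xi)$.
\end{itemize}

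\textbf{Construction of $\mathcal{L}$, row by row.} The $\Eta,\rb$ row is a vector $w=(\what\Eta,\what\rb)$ with $w\cdot\xi=0$. We write $w=\sum_{k}c_k(e_k\xi_l-e_l\xi_k)$ as a combination of antisymmetric pieces and take the corresponding second-order operator $\partial_l\partial_l$-type terms of the form $(\partial_k\partial_l-\partial_l\partial_k)$-compatible curl expressions. Concretely, we set
\[
\mathcal{L}_{\text{row}}\phi=\sum_{k<l}a_{kl}(e_k\partial_l-e_l\partial_k)\partial^{m-1}\phi,
\]
which is automatically divergence free.

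For the symmetric block $S$ with $S\xi=\0$ and $S_{n+1,n+1}=0$, we use the fourth-order double-curl operators from \cite[Proposition 4.2]{CDL+LSJ-09}: $S$ is written as a combination of matrices $(a\otimes b+b\otimes a)$-type built from vectors orthogonal to $\xi$. We then check that the zero $(n+1,n+1)$ entry can be preserved, using the hypothesis $\what V e_{n+1}\neq\0$. This hypothesis guarantees that $\xi$ is not parallel to $e_{n+1}$ in the degenerate way, so the time component constraint can be matched.

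\textbf{Localization.} We pick a cutoff $\chi\in C_c^\infty(B_1(\0))$ with $0\le\chi\le1$ and $\chi=1$ on $B_{1/2}$. We set
\[
V(y)=\mathcal{L}\big(\chi(y)\lambda^{-m}\cos(\lambda y\cdot\xi)\big).
\]
Then (P1) and divergence-freeness are automatic. By the Leibniz rule,
\[
V=\chi\,\what V\,\cos^{(m)}(\lambda y\cdot\xi)+O(1/\lambda),
\]
and the main term lies on $\sigma$ since $|\chi\cos^{(m)}|\le1$. Choosing $\lambda$ large gives (P2).

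For (P3), the leading part of $\vb$ is $\chi\,\what\vb\,\cos^{(m)}(\lambda y\cdot\xi)$. Hence
\[
\int|\vb|\ge|\what\vb|\int_{B_{1/2}}|\cos(\lambda y\cdot\xi)|\,dy-O(1/\lambda),
\]
and the integral tends to $\frac{2}{\pi}|B_{1/2}|$ as $\lambda\to\infty$. This gives $\alpha=\frac{1}{\pi}|B_{1/2}|$ for large $\lambda$, and the same argument applies to $\rb$. One caveat: if $|\what\vb|$ is tiny relative to the error, we absorb the error by taking $\lambda$ large depending on $\what V$, which is allowed.

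\textbf{Main obstacle.} We expect the construction of $\mathcal{L}$ to be the hard step, namely producing divergence-free potentials that respect the symmetric-with-zero-corner constraint on the upper block while the extra tracer row is non-symmetric. If the direct double-curl approach fails to preserve $Q_{n+1,n+1}=0$, we will instead first rotate coordinates so that $\xi$ is convenient, and reduce to an explicit planar construction in the $(\xi,e_k)$ planes, as done in $n=2$ in \cite{CAB+CMFL+JHLN-15}.
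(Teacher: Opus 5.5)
Your architecture is sound, and the parts you actually carry out are correct. The tracer row works: for instance, the antisymmetric matrix $(w\otimes\xi-\xi\otimes w)/|\xi|^2$ maps $\xi$ to $w=(\what\Eta,\what\rb)$, and composing with $(|\xi|^{-2}\xi\cdot\nabla)^{m-1}$ fixes the normalization. Given a suitable $\mathcal{L}$, your localization also works. On $B_{1/2}$, where $\chi\equiv1$, one has $V=\what V\psi^{(m)}(y\cdot\xi)$ exactly, so (P3) follows from the equidistribution of $|\cos(\lambda y\cdot\xi)|$, with a dimensional $\alpha$ and no error term.

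\textbf{The gap.} It is the step you yourself flag, and it is the whole substance of the proposition. You need a divergence-free potential for the symmetric block that reproduces $\what U$ along $\xi$ \emph{and} keeps the $(n+1,n+1)$ entry equal to zero for \emph{every} input $\phi$. This is asserted, not constructed, and the tools you point to do not obviously give it. For instance, the natural double-curl symbol
\[
\mathcal{L}(\zeta)S=|\zeta|^2S-\zeta\otimes S\zeta-S\zeta\otimes\zeta+(\zeta\cdot S\zeta)I_{n+1}
\]
is symmetric, satisfies $\mathcal{L}(\zeta)S\,\zeta=\0$, and gives $\mathcal{L}(\xi)S=|\xi|^2S$ when $S\xi=\0$. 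But its corner entry is $\zeta\cdot S\zeta-2\zeta_{n+1}(S\zeta)_{n+1}$. For $S_{n+1,n+1}=0$ this is the quadratic form of the upper-left $n\times n$ block of $S$ evaluated at the spatial part of $\zeta$. It vanishes at $\zeta=\xi$ but not identically. So for $\phi=\chi\psi(y\cdot\xi)$ the corner is $O(1/\lambda)$ but in general nonzero. It must vanish exactly, since otherwise the $(n+1)$-st row no longer encodes $\mathrm{div}\,\vb=0$. Your fallback of rotating coordinates fails for the same reason: a rotation bringing $\xi$ to a coordinate axis generally moves $e_{n+1}$, and the corner constraint is tied to $e_{n+1}$.

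\textbf{The missing idea.} You need a non-orthogonal change of frame that fixes $e_{n+1}$. This is what $\what Ve_{n+1}\neq\0$ is for; you correctly note that it forces $\xi\nparallel e_{n+1}$, but not how to use it.
\begin{itemize}
\item Choose an invertible $A$ with $Ae_1=\xi$ and $Ae_{n+1}=e_{n+1}$. Let $P=\mathrm{diag}(A^{-\top},1)$ and $\what G=P^{-1}\what VA$. Then $\what Ge_1=\0$, and the corner of $\what G$ is still zero.
\item In the $z$-frame, use the explicit second-order potential $G_{ij}=\partial_{11}(\what G_{ij}\phi)$ for $i,j\ge2$, $G_{1j}=G_{j1}=-\sum_{i\ge2}\partial_{1i}(\what G_{ij}\phi)$, and $G_{11}=\sum_{i,j\ge2}\partial_{ij}(\what G_{ij}\phi)$. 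Its corner vanishes identically. Use the analogous formula for the last row, which is the paper's Step~1 potential up to an overall sign.
\item Define $\mathcal{L}\phi(y):=P\,[\mathcal{G}(\phi\circ A^{-\top})](A^{\top}y)\,A^{-1}$. This is a constant-coefficient, second-order, divergence-free operator. It preserves the structure because $A^{-1}e_{n+1}=e_{n+1}$, and it satisfies $\mathcal{L}(\psi(y\cdot\xi))=\what V\psi''(y\cdot\xi)$.
\end{itemize}
With this $\mathcal{L}$, your argument goes through with $m=2$.

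\textbf{Comparison with the paper.} The paper splits $\what V$ into the Euler block, quoted from \cite{CDL+LSJ-09}, and the tracer row. It builds the row with the Step~1 potential, the shear $A$, and a Vitali covering that restores $\supp V\subset B_1(\0)$ while keeping $\alpha$ dimensional. Once repaired, your single-operator route is cleaner in two ways. First, one $\xi$ and one phase for all of $\what V$ is exactly what (P2) requires for the combined field. The paper's componentwise treatment needs, without saying so, that both pieces are built with the same $\xi$, frequency, shear and covering. Second, applying the cutoff directly in the $y$-variables makes the covering argument unnecessary.
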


\begin{remark}
In the following proof,  we denote $B_r(y)$ as the  open ball of radius $r>0$ centered at $y$ in $[0,\infty)\times \R^n$.
\end{remark}
\begin{proof}[Proof of Proposition \ref{Perturbation-proposition}]
Let us first consider $\what{V} \in\Lambda$ of the form
\begin{align*}
\what{V} = \begin{pmatrix}
\what{U}\\
\what{W}^\top
\end{pmatrix}, \ \mbox{ with \ $\displaystyle \what{U} = \begin{pmatrix}
\what{\bz} + \what{q} I_n & \what{\vb}\\
\what{\vb}^\top & 0
\end{pmatrix}\in \matr_0^{n+1}$ \ and \ $\displaystyle \what{W} = \begin{pmatrix}
\what{\Eta}\\
\what{\rb}
\end{pmatrix}\in \M_{(n+1) \times 1}$. }
\end{align*}
Then, note that the $(n+1)\times(n+1)$ matrix $\what{U}$ is of the same type as in the incompressible Euler equations studied in \cite{CDL+LSJ-09}. Thus, by \cite[Proposition 3.2]{CDL+LSJ-09}, 
there exists a smooth divergence-free matrix field $U: \R^{n}\times[0,\infty) \to \M_{(n+1)\times(n+1)}$ which satisfies the required properties of Proposition \ref{Perturbation-proposition}.

Thus, it remains to show that there exists a smooth, divergence-free matrix $W\in \M_{(n+1) \times 1}$ satisfying \eqref{P1}, \eqref{P2} and \eqref{P3}. We begin by considering a special of $W$ and show that \eqref{P1} and \eqref{P2} hold. Using this particular case, we then establish properties \eqref{P1}-\eqref{P3} for general $W$.
\vskip 2mm
\noindent
\textbf{Step 1.} First, let us consider the case when
\begin{align}
\what{W}^\top e_{1} = 0\ \text{ and }\ \what{W}^\top e_{n+1} \ne 0,\ \mbox{ i.e., }\ \what{W} = \begin{pmatrix}
0 & \what{\eta_2} & \cdots & \what{\eta_n} & \what{\rb}
\end{pmatrix}^\top,\label{W-tilde}
\end{align}
where $\{e_i\}_{i=1}^{n+1}$ is the standard basis of $\R^{n+1}$.
Then, we fix a smooth cut-off function $\varphi : [0,\infty)\times \R^n \to \R$ which satisfies the following properties:
\begin{itemize}
\item[$\boldsymbol{\ast}$] $\abs{\varphi} \leq 1$,
\item[$\boldsymbol{\ast}$]  $\varphi =1 $ on $B_{\frac{1}{2}}(\0)$,
\item[$\boldsymbol{\ast}$]  $\supp (\varphi) \subset B_{1}(\0),$
\end{itemize}
and consider the mapping $W: [0,\infty)\times \R^n
\to \M_{(n+1) \times 1}$ given by
\begin{align}\label{W(y)}
W(y) := \frac{1}{N^2}\begin{pmatrix}
\sum\limits_{i=2}^{n}\partial_{1i}^2 (\what{\eta}_i \varphi(y) \sin (N y_1)) + \partial_{1(n+1)}^2 (\what{\rb} \varphi(y) \sin (N y_1))\\
-\partial_{11}^2 (\what{\eta}_2 \varphi(y) \sin (N y_1))\\
\vdots\\
-\partial_{11}^2 (\what{\eta}_n \varphi(y) \sin (N y_1))\\
-\partial_{11}^2 (\what{\rb} \varphi(y) \sin (N y_1))
\end{pmatrix},
\end{align}
where $y_1$ is the first coordinate of $y = (y_1, \ldots, y_{n+1})\in [0,\infty)\times \R^n$ and $N$ is same as in \eqref{eqn-N}.
The smoothness of $W$ follows from the fact that both $\varphi$ and $\sin (N y_1)$ are smooth. Furthermore, it easy to check that $W(y)$ is divergence-free. Since $\supp(\varphi) \subset B_{1}(\0)$, it implies that the map $W$ is supported in $B_{1}(\0)$, which immediately proves the required property \eqref{P1}.

Note from assumption, $\varphi(y) =1$ for $y\in B_{\frac{1}{2}}(\0)$, that 
\begin{align*}
W(y) = \what{W} \sin(Ny_1) \ \text{ for } \ y\in B_{\frac{1}{2}}(\0).
\end{align*}
In particular, we estimate
\begin{align}
\int_{B_{1}(\0)} \abs{W(y)^\top e_{n+1}} \,\mathrm{d} y & \geq \int_{B_{\frac{1}{2}}(\0)} \abs{W(y)^\top e_{n+1}} \,\mathrm{d} y\\
& \geq |\what{W}^\top e_{n+1}| \int_{B_{\frac{1}{2}}(\0)} \abs{\sin(Ny_1)} \,\mathrm{d} y\\
& \geq 2\alpha |\what{W}^\top e_{n+1}|\\
& = 2\alpha |\what{\rb}|,\label{p3 for W-tilde}
\end{align}
for some positive dimensional constant $\alpha = \alpha(n)$, for sufficiently large $N$.
Next, let us define
\begin{equation*}
\wtilde{W}(y): = \begin{pmatrix}
0 & \what{\eta}_2 \sin(Ny_1) & \cdots & \what{\eta}_n\sin(Ny_1) & \what{\rb}\sin(Ny_1)
\end{pmatrix}^\top=\widehat{W}\sin(Ny_1).
\end{equation*}
Then, calculate
\begin{equation}\label{2p8}
(W - \varphi\wtilde{W})(y) = \frac{1}{N^2}\begin{pmatrix}
\sum\limits_{i=2}^{n}\partial_{1i}^2 (\what{\eta}_i \varphi(y) \sin (N y_1)) + \partial_{1(n+1)}^2 (\what{\rb} \varphi(y) \sin (N y_1))\\
-\partial_{11}^2 (\what{\eta}_2 \varphi(y) \sin (N y_1)) - N^2\what{\eta}_2 \varphi(y) \sin(Ny_1)\\
\vdots\\
-\partial_{11}^2 (\what{\eta}_n \varphi(y) \sin (N y_1))- N^2\what{\eta}_n \varphi(y) \sin(Ny_1)\\
-\partial_{11}^2 (\what{\rb} \varphi(y) \sin (N y_1)) - N^2\what{\rb} \varphi(y) \sin(Ny_1)
\end{pmatrix}.
\end{equation}
Now, observe that
\begin{align*}
&	\sum_{i=2}^{n}\partial_{1i}^2 (\what{\eta}_i \varphi(y) \sin (N y_1))   + \partial_{1(n+1)}^2 (\what{\rb} \varphi(y) \sin (N y_1))\\
&  = \sum_{i=2}^{n} \what{\eta}_i [\partial_{1i}^2 \varphi(y) \sin(Ny_1) + N \partial_i \varphi(y) \cos(Ny_1)]\\
& \quad + \what{\rb} [\partial_{1(n+1)}^2 \varphi(y) \sin(Ny_1) + N \partial_{(n+1)} \varphi(y) \cos(Ny_1)].
\end{align*}
Furthermore, for $2 \leq j \leq n$, we have
\begin{align*}
-\partial_{11}^2 (\what{\eta}_j \varphi(y) \sin (N y_1)) - \what{\eta}_j N^2 \varphi(y) \sin(Ny_1)
= \what{\eta}_j[-\partial_{11}^2\varphi(y) \sin(Ny_1) - 2N \partial_1 \varphi(y) \cos(Ny_1)].
\end{align*}
Similarly, it follows that
\[-\partial_{11}^2 (\what{\rb} \varphi(y) \sin (N y_1)) - \what{\rb} N^2 \varphi(y) \sin(Ny_1)= \what{\rb} [-\partial_{11}^2\varphi(y) \sin(Ny_1) - 2N \partial_1 \varphi(y) \cos(Ny_1)].\]
Thus, by taking $\mL^\infty$-norm on both sides of \eqref{2p8} and using the triangle inequality, we obtain
\begin{align*}
\Vert W - \varphi\wtilde{W} \Vert_{\mL^\infty(B_{1}(\0))}
&\leq  \frac{C}{N}  \left\| \varphi\right\|_{\Cb^2(B_{1}(\0))}.
\end{align*}
By choosing $N$ sufficiently large, we deduce
\[	\Vert W - \varphi\wtilde{W} \Vert_{\mL^\infty(B_{1}(\0))}  \leq  \eps,\ \text{ for sufficiently small }\ \eps>0.\]
 On the other hand, since $\abs{\varphi} \leq 1$ and $\wtilde{W}(y)$ takes values in $\sigma_{\what{W}}$ (the line segment with endpoints $-\what{W}$ and $\what{W}$ in $\M_{(n+1)\times 1}$),
that means the image/range of $\varphi \wtilde{W}$ is also contained in $\sigma_{\what{W}}$. This shows that the image/range of $W$ is contained in the $\eps$-neighborhood of $\sigma_{\what{W}}$, which proves \eqref{P2} for $\what{W}$.
\vskip 2mm
\noindent
\textbf{Step 2.} Next, our aim is to prove a general case by reducing it to the settings of Step \textbf{1}.
\vskip 1mm
Let $\what{W}\in\M_{(n+1)\times1}$ be such that $\what{W}^\top e_{n+1} \ne 0$ and $\what{W}^\top f = 0$,
where $f\in\R^{n+1}\backslash\{\0\}$ is such that $\{f,e_{n+1}\}$ is linearly independent, otherwise the above assumptions will contradict each other.  Now, suppose $\{f_1, f_2, \ldots, f_{n+1}\}$ is a basis of $\R^{n+1}$ with $f_1 = f$ and $f_{n+1}  = e_{n+1}$, and consider the matrix $A\in \M_{(n+1)\times(n+1)}$ such that
\begin{align}\label{A-def}
Ae_i = f_i \ \text{ for } \ 1\leq i\leq n+1.
\end{align}
Then, the columns of $A$ forms a basis  of $\R^{n+1}$, which implies $\mathrm{det} (A) \ne 0$. But from \eqref{A-def}, we further infer  $Ae_{n+1}  = e_{n+1}$. Observe that the map $T:\R^{n+1} \to \R^{n+1}$ defined by
\[T X := (A^{-1})^{\top}  X\]
is a linear isomorphism on $\R^{n+1}$. Next, let us set	
\begin{align}\label{Setting-G-tilde}
\what{G} = A^\top  \what{W}\; (= T^{-1} (\what{W})).
\end{align}
Then, by the definition of $A$ and assumptions on $\what{W}$, we deduce
\begin{align*}
\what{G}_{1} = \what{G}^\top e_1  = (A^\top \what{W})^\top  e_{1} = \what{W}^\top  Ae_1 = \what{W}^\top f = 0,
\end{align*}
and
\begin{align*}
\what{G}_{n+1} = \what{G}^\top  e_{n+1} = (A^\top \what{W})^\top  e_{n+1} = \what{W}^\top  A e_{n+1} = \what{W}^\top  e_{n+1} \ne 0.
\end{align*}
Thus, by using Step 1, for a given $\eps>0$, we can construct a smooth map $G:
[0,\infty)\times \R^n \to \M_{(n+1)\times1}$ supported in $B_1(\0)$ such that
the image of $G$ is contained in $$B_{\eps\Vert T\Vert_{\mathrm{op}}^{-1}}(\tau_{\what{G}}) = B_{\eps\Vert (A^{-1})^{\top} \Vert_{\mathrm{op}}^{-1}}(\tau_{\what{G}}) = B_{\eps\Vert A^{-1}\Vert_{\mathrm{op}}^{-1}}(\tau_{\what{G}}),$$
where $\tau_{\what{G}}$ is the line segment with endpoints $-\what{G}$ and $\what{G}$ in $\M_{(n+1) \times 1}$, and $\Vert \cdot \Vert_{\mathrm{op}}$ denotes the operator norm.

Next, let $W$ be the $\M_{(n+1)\times1}$-valued map given by
\begin{align*}
W(y) = (A^{-1})^{\top}  G(A^\top y) = T(G(A^\top y)).
\end{align*}
Then, the isomorphism $T$ maps the line segment $\tau_{\what{G}}$ onto $\sigma_{\what{W}}$ (the line segment with endpoints $-\what{W}$ and $\what{W}$ in $\M_{(n+1)\times 1}$), i.e., $T(\tau_{\what{G}})  = \sigma_{\what{W}}$. Therefore, by the properties of the map $G,$ we have the following:
\begin{itemize}
\item[$\boldsymbol{\ast}$]  $W$ is supported in $(A^{-1})^{\top} (B_1(\0))$  and is smooth.
\item[$\boldsymbol{\ast}$]  Using the definition of $W$ and the fact that $G$ is divergence-free, we assert that $W$ is divergence-free, since
\begin{align*}
\int_{(A^{-1})^{\top} B_1(\0)} W(y) \cdot \nabla \psi(y) \,\mathrm{d} y
& = \int_{(A^{-1})^{\top} B_1(\0)} ((A^{-1})^{\top} G(A^\top y))\cdot\nabla \psi(y) \,\mathrm{d} y\\
& = (\mathrm{det}\, A)^{-1}\int_{B_1(\0)} G(z)\cdot\nabla \psi((A^{-1})^{\top} z) \,\mathrm{d} z\\
& = 0, \mbox{ for all  $\psi\in C_{0}^{\infty}(\R^{n+1})$.}
\end{align*}
\item[$\boldsymbol{\ast}$]  $W$ takes values in an $\eps$-neighborhood of the segment $\sigma_{\what{W}}$.
\end{itemize}
Furthermore, by utilizing the transformation $A^\top y = z$, the estimate \eqref{p3 for W-tilde} for $G(z)$ and the standard property $\mathrm{det} (A) = \mathrm{det} (A^\top )$, we calculate
\begin{align}
\int_{(A^{-1})^{\top} B_1(\0)} \abs{W^\top (y)e_{n+1}} \,\mathrm{d} y
& = 	\int_{(A^{-1})^{\top} B_1(\0)} \abs{((A^{-1})^{\top}  G(A^\top y))^\top  e_{n+1}} \,\mathrm{d} y \\
&  = \int_{B_1(\0)} \abs{((A^{-1})^{\top}  G(z))^\top  e_{n+1}} \frac{\,\mathrm{d} z}{\abs{\mathrm{det}(A^\top )}}\\
& \geq \frac{2\alpha|((A^{-1})^{\top}  \what{G})^\top  e_{n+1}|}{\abs{\mathrm{det}(A^\top )}}\\
&  =  \frac{ 2\alpha|((A^{-1})^{\top}  A^\top \what{W})^\top  e_{n+1}|}{\abs{\mathrm{det}(A)}}\\
&  =   \frac{2\alpha |\what{W}^\top  e_{n+1}|}{\abs{\mathrm{det}(A)}}. \label{estimate-W}
\end{align}
To complete the proof, we employ a covering argument, i.e., there exists a finite number of points $y_k\in B_1(\0)$ and radii $r_k>0$, for $k\in I,$ where $I$ is an index set, such that the rescaled and translated collection $\{(A^{-1})^{\top}  B_{r_k}(y_k)\}_{k\in I}$ is pairwise disjoint, contained in $B_1(\0)$, and
\begin{align}\label{convering of unit ball}
\sum_{k\in I} \abs{(A^{-1})^{\top}  B_{r_k}(y_k)} \geq \frac{1}{2}\abs{B_1(\0)},
\end{align}
where $|\cdot|$ denotes the $(n+1)$-dimensional Lebesgue measure. 
Now, let us consider 
\begin{equation}
W_k(y) :=  W\left(\frac{y-y_k}{r_k}\right)\ \text{ and } \ \accentset{\circ}{W} := \sum_{k\in I} W_{k}.
\end{equation}
Since $W_k$ is smooth and supported in $(A^{-1})^{\top}  B_{r_k}(y_k)$  for each $k\in I$, it implies that $\accentset{\circ}{W} : \R^{n}\times[0,\infty) \to \M_{(n+1)\times 1}$ is smooth, satisfy \eqref{P1} and \eqref{P2}. Furthermore, by utilizing the transformations $\displaystyle\frac{y-y_k}{r_k} = z$ and $A^\top z = q$, the estimate \eqref{estimate-W}, along with  the properties $\displaystyle\frac{| B_{r_k}(y_k)|}{|B_1(\0)|} = \frac{{\alpha(n+1)}r_{k}^{n+1}}{{\alpha(n+1)}}$ and $|\mathrm{det}{A}| = |\mathrm{det}{A^\top}|$, it follows that
\begin{align}
\int_{B_1(\0)} \big|\accentset{\circ}{W}^\top (y)e_{n+1}\big| \,\mathrm{d} y
& = \sum_{k\in I}\int_{(A^{-1})^{\top}  (B_{r_k}(y_k))} \abs{W^\top _k(y)e_{n+1}}\,\mathrm{d} y\\
& = \sum_{k\in I} \int_{(A^{-1})^{\top}  (B_{r_k}(y_k))} \abs{ W^\top \left(\frac{y-y_k}{r_k}\right) e_{n+1}} \,\mathrm{d} y\\
& = \sum_{k\in I}\int_{(A^{-1})^{\top} (B_{r_k}(y_k))} \bigg|\left((A^{-1})^{\top}  B\left(A^\top  \left(\frac{y-y_k}{r_k}\right)\right)\right)^\top  e_{n+1}\bigg| \,\mathrm{d} y\\
& = \sum_{k\in I} \int_{(A^{-1})^{\top} (B_1(\0))} \abs{((A^{-1})^{\top}  G(A^\top z))^\top  e_{n+1}} r_{k}^{n+1} \,\mathrm{d} z\\ 
& = \sum_{k\in I} \int_{B_1(\0)} \abs{((A^{-1})^{\top}  G(q))^\top  e_{n+1}} \frac{r_{k}^{n+1}}{\abs{\mathrm{det}(A^\top )}} \,\mathrm{d} q\\ 
& \geq \sum_{k\in I} \frac{2\alpha |\what{W}^\top  e_{n+1}|}{\abs{\mathrm{det}(A)}}r_{k}^{n+1}\\ 
& = \sum_{k\in I} \frac{2\alpha |\what{W}^\top  e_{n+1}| }{\abs{\mathrm{det}(A)}} \frac{\abs{ B_{r_k}(y_k)}}{\abs{B_1(\0)}}\\
& = \frac{2\alpha |\what{W}^\top  e_{n+1}| }{{\abs{\mathrm{det}(A)}}} \frac{\sum_{k\in I} {\abs{\mathrm{det}(A^\top )}}\abs{ (A^{-1})^{\top} (B_{r_k}(y_k))}}{\abs{B_1(\0)}}\\ 
& = \frac{2\alpha |\what{W}^\top  e_{n+1}| }{\abs{B_1(\0)}} \sum_k \abs{ (A^{-1})^{\top} (B_{r_k}(y_k))} .\label{2p11}
\end{align}
Hence by utilizing the estimate \eqref{convering of unit ball} in \eqref{2p11}, we infer
\begin{align*}
\int_{B_1(\0)} \abs{\rb(y)} dy = \int_{B_1(\0)} |\accentset{\circ}{W}^\top (y)e_{n+1}| dy  \geq  \alpha|\what{W}^\top  e_{n+1}| = \alpha|\what{\rb}|,
\end{align*}
i.e., \eqref{P3} is satisfied, which completes the proof.
\end{proof}


\subsection{Baire category method}\label{Sec-Baire category}

In this subsection, we define the concept of subsolutions and obtain certain estimates and convergence results. Then, we employ the Baire category method to generate infinitely many weak solutions to the PDE \eqref{RDDEE} for almost all fixed paths.

For simplicity, whenever $\bO\subseteq [0,\infty)\times \R^n$, we use the following notation throughout this subsection:
\begin{align*}
\Cb^{\infty}(\bO; \R \times \R^n \times \R^n \times \matr_0^n \times \R) & := \C^{\infty}(\bO),\\
\mL^2(\bO; \R \times \R^n \times \R^n \times \matr_0^n \times \R) & := \Lb^2(\bO),\\
\mL^\infty(\bO; \R \times \R^n \times \R^n \times \matr_0^n \times \R) & := \Lb^\infty(\bO).
\end{align*}

\subsubsection{Subsolutions}\label{Sec-Subsolutions}

Let us recall that $\U_t$ is a bounded subset of $\R \times \R^n \times \R^n \times M_0^n \times \R$, see Proposition \ref{prop-U}. First, we define a complete metric space with the help of the following subsolutions space:

Let $\X_0$ denote the set of functions $(\rb, \Eta, \vb, \bz, q) \in \C^{\infty}([0,\infty)\times\R^n)$ that satisfies
\begin{itemize}
\item[(i)] $\supp (\rb, \Eta, \vb, \bz, q) \subset \bO$,
\item[(ii)] $(\rb, \Eta, \vb, \bz, q)$ solves \eqref{DE} in $[0,\infty)\times \R^n$,
\item[(iii)] $(\rb,\Eta,\vb, \bz,q)(t,x)\in\U_t$ for all $(t,x)\in[0,\infty)\times \R^n$.
\end{itemize}
We equip $\X_0$ with the $\Lb^\infty$-$w^{\ast}$ topology and then define $\X,$ the closure of $\X_0$ in this topology.

The following result is, motivated from \cite[Lemma 4.4]{CDL+LSJ-09}, one of the important results of this work. It establishes that every element of $\X$ satisfies \eqref{eqn-mod-prop}, is a compactly supported weak solution to the problems \eqref{RDDEE-T} as well as \eqref{RDDEE}.

\begin{lemma}\label{Equivalent-lemma}
The set $\X$ with $\Lb^\infty$-$w^{\ast}$ topology is a non-empty compact metrizable space. Moreover, if $(\rb, \Eta, \vb, \bz, q)\in\X$ is such that
\begin{equation}\label{eqn-mod-prop}
\abs{\vb(t,x)} = h(t) \ \mbox{ and }\  \abs{\rb(t,x)} = 1 \ \mbox{ for a.e. } \ (t,x) \in \bO,
\end{equation}
then $\big(\vb,\rb,p\big),$ where $p:= q -\frac{\abs{\vb}^2}{2}\dela{ = q -\frac{h^2(t)}{2}},$ is a weak solution of \eqref{RDDEE-T} (or \eqref{RDDEE}) such that
\begin{equation*}
\vb(t,x) = 0,\ \rb(t,x) = 0\ \mbox{ and }\ p(t,x) = 0\ \mbox{ for all }\ (t,x) \in ([0,\infty)\times \R^n) \backslash\bO.
\end{equation*}
\end{lemma}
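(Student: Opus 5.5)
We will follow the scheme of \cite[Lemma 4.4]{CDL+LSJ-09}, splitting the argument into a topological part and an identification part.

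\textbf{Topological part.} By Proposition \ref{prop-U}, and since $h$ is continuous on $[0,T]$, the sets $\U_t$ are uniformly bounded for $t\in[0,T]$. Elements of $\X_0$ vanish outside the bounded set $\bO$, so $\X_0$ lies in a fixed closed ball of $\Lb^\infty(\bO)$. By Banach--Alaoglu this ball is weak-$\ast$ compact. Because $\mL^1(\bO)$ is separable, the weak-$\ast$ topology on bounded sets is metrizable, for instance by
\[
d(f,g)=\sum_k 2^{-k}\Big|\int (f-g)\cdot\phi_k\Big|
\]
with $\{\phi_k\}$ dense in $\mL^1$. Hence $\X$, a closed subset of a compact metrizable set, is compact and metrizable. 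Nonemptiness follows from Lemma \ref{Lem-U}: the zero function satisfies (i)--(iii), so $\0\in\X_0$.

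\textbf{Identification part.} Let $(\rb,\Eta,\vb,\bz,q)\in\X$ be a weak-$\ast$ limit of $(\rb_k,\Eta_k,\vb_k,\bz_k,q_k)\in\X_0$.
\begin{enumerate}
\item The linear system \eqref{DE}, tested against smooth compactly supported functions, passes to weak-$\ast$ limits. So the limit solves \eqref{DE} in the sense of distributions and is supported in $\overline{\bO}$.
\item The closed convex set $\{f:\ f(t,x)\in\overline{\U_t}\ \text{a.e.}\}$ is weak-$\ast$ closed. This uses the convexity of each $\overline{\U_t}=\K_t^{\mathrm{co}}$ and a Hahn--Banach/Mazur argument with continuous dependence on $t$. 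Hence $(\rb,\Eta,\vb,\bz,q)(t,x)\in K_t^{\mathrm{co}}\times[-1,1]$ a.e.
\item Next we show the limit lies in $\K_t$ pointwise. On $K_t^{\mathrm{co}}$ we have $|\vb|\le h(t)$ and $|\rb|\le1$, since these are convex constraints satisfied on $K_t$. The extreme-point structure must then force $\bz=\vb\otimes\vb-\frac{|\vb|^2}{n}I_n$ and $\Eta=\rb\vb$ whenever $|\vb|=h(t)$ and $|\rb|=1$. Concretely, if $(\rb,\vb)$ is extremal for these convex constraints, any convex combination $\sum\lambda_i z_i$ of points $z_i\in K_t$ representing the state must satisfy $\vb_i=\vb$ and $\rb_i=\rb$ for all $i$, by strict convexity of the ball and the fact that $\{\pm1\}$ are extreme in $[-1,1]$. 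Consequently all $z_i$ coincide, and the point lies in $K_t$.
\item With the point in $K_t$, the argument of Remark \ref{U-nonempty} (\cite[Lemma 2.1]{CDL+LSJ-09}) applies. Setting $p=q-\frac{|\vb|^2}{n}$, the relations $\bz+qI_n=\vb\otimes\vb+pI_n$ and $\Eta=\rb\vb$ turn \eqref{DE} into the weak formulation \eqref{weak-for}. The statement's normalisation $p=q-|\vb|^2/2$ should be reconciled with the definition of $q$ in \eqref{DE}; in any case $p$ differs from $q$ by a function of $t$ on $\bO$. This difference is harmless, since divergence-free test fields annihilate gradients.
\item Outside $\bO$, all components vanish because the supports of the approximating elements lie in $\bO$. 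Thus $\vb=0$ and $\rb=0$ there, and $p=q-\frac{|\vb|^2}{n}=0$ there as well.
\end{enumerate}

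\textbf{Main obstacle.} We expect the delicate point to be step 3 together with step 2. This means handling the time-dependent constraint set $\K_t$ in the weak-$\ast$ closure, and justifying that $\{|\vb|=h,\ |\rb|=1\}\cap K_t^{\mathrm{co}}\subset K_t$. We would first prove this pointwise extremality claim directly from Carath\'eodory's representation, as in Lemma \ref{line-segment-lemma}, and then apply it a.e. in $(t,x)$.
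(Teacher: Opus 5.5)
Your proposal is correct and follows the same route as the paper's proof. The paper's steps are: nonemptiness from $\0\in\U_t$; weak-$\ast$ compactness and metrizability of the bounded set $\X$; passage to the limit in the linear system \eqref{DE}, with values in $\overline{\U_t}$; the observation that a point of $K_t^{\mathrm{co}}$ with $|\vb|=h(t)$ and $|\rb|=1$ lies in $K_t$; and finally \cite[Lemma 2.1]{CDL+LSJ-09}. You also supply details the paper only asserts, namely the Mazur argument for weak-$\ast$ closedness of the pointwise convex constraint and the strict-convexity/Carath\'eodory argument for the extremality claim; your remark on the pressure normalization ($q-|\vb|^2/n$ versus $q-|\vb|^2/2$) is sound, since the pressure does not enter Definition \ref{Def-weak-rand}.
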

\begin{proof}
As a first step, let us select and fix a.e. $t\in [0,T]$ and show that $\X$ with $\Lb^\infty-w^{\ast}$ topology is a non-empty compact metrizable space. 

\vspace{1mm}
\noindent
\textbf{Step 1.} Observe from Lemma \ref{Lem-U} that $\0\in\U_t$, which implies $\X$ is non-empty. Furthermore, the boundedness of $\bO$ yields that $\X$ is a bounded and closed subset of $\Lb^\infty(\bO)$, i.e., $\X$ is $w^\ast$-compact. Therefore, by \cite[Theorem 3.16]{WR-73}, $\X$ with the $w^\ast$ topology becomes a compact metrizable space.

Next, we aim to show that each element $(\rb, \Eta, \vb, \bz, q)\in\X$ satisfying \eqref{eqn-mod-prop} is a weak solution of \eqref{RDDEE-T} (or \eqref{RDDEE}) in the sense of Definition \ref{Def-weak-rand} and has compact support.

\vspace{1mm}
\noindent
\textbf{Step 2.} Note from \eqref{eqn-U} that $\overline{\U_t}$ is a compact convex set. Thus, every $(\rb, \Eta, \vb, \bz, q)\in\X$ solves \eqref{DE}, takes values in $\overline{\U_t}$, and is supported in $\overline{\bO}$.
In particular, $(\rb, \Eta, \vb, \bz)(t,x) \in K_t^{\mathrm{co}}$ for almost every $(t,x)\in \bO$.
Therefore, from the definition of $K_t
$, see \eqref{K}, it implies that 
\begin{align*}
(\rb, \Eta, \vb, \bz)(t,x) \in K_t\
\mbox{ if and only if }\ \abs{\vb(t,x)} = h(t)\ \mbox{ and }\ \abs{\rb(t,x)} = 1.
\end{align*}
Hence, it follows from Remark \ref{U-nonempty} that $(\rb, \Eta, \vb, \bz, p)$ solves \eqref{RDDEE-T} (or \eqref{RDDEE}), which concludes the proof.
\end{proof}

The following result is a generalization of \cite[Lemma 2.2]{CAB+CMFL+JHLN-15} from the case of constant energy equal to one to a positive, bounded, continuous, time-dependent energy profile $h(\cdot)$ satisfying the assumptions of Theorem \ref{Main-result-Random-PDE}. In the sequel, $\abs{\bO_x}$ and $\abs{\bO}$ denote the $n$- and $(n+1)$-dimensional Lebesgue measures of $\bO_x$ and $\bO$, respectively.

\begin{remark}\label{Rmk-h-cont}
In the proof of the perturbation Lemma \ref{Convergence-lemma}, particularly in Step 3, we use the fact that $h(\cdot)$ is defined pointwise, which in turn requires continuity.
\end{remark}
\begin{lemma}\label{Convergence-lemma}
There exists a constant $\beta>0$ with the following property:

Given $(\rb_0, \Eta_0, \vb_0, \bz_0, q_0)\in\X_0,$ there exists a sequence $(\rb_k, \Eta_k, \vb_k, \bz_k, q_k)\in\X_0$ such that
\begin{align}
&\Vert \vb_k \Vert_{\mL^2(\bO)}^2 + \Vert \rb_k \Vert_{\mL^2(\bO)}^2\\
& \geq \Vert \vb_0 \Vert_{\mL^2(\bO)}^2 + \Vert \rb_0 \Vert_{\mL^2(\bO)}^2 + \beta \big((\|h\|_{L^2(0,T)}^2|\bO_x|+\abs{\bO}) - (\Vert \vb_0 \Vert_{\mL^2(\bO)}^2 + \Vert \rb_0 \Vert_{\mL^2(\bO)}^2)\big)^2,\label{eqn-inequality}
\end{align}
and
\begin{equation}\label{3p1}
(\rb_k, \Eta_k, \vb_k, \bz_k, q_k)  \xrightharpoonup[k\to\infty]{} (\rb_0, \Eta_0, \vb_0, \bz_0, q_0)\ \ \mbox{ w$^\ast$ in }\ \mL^\infty(\bO).
\end{equation}
\end{lemma}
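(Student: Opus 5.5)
The plan is to follow the proof of the perturbation property in \cite[Lemma 4.6]{CDL+LSJ-09}, adapted as in \cite[Lemma 2.2]{CAB+CMFL+JHLN-15}. The ingredients are Lemma \ref{line-segment-lemma} (a $\Lambda$-segment inside $\U_t$ whose length is controlled by the distance to the boundary) and Proposition \ref{Perturbation-proposition} (localized plane waves along that segment). Write $w_0=(\rb_0,\Eta_0,\vb_0,\bz_0,q_0)$. The new ingredient is the time dependence of $h$: we localize in small space-time cubes on which $h$ is almost constant, using the uniform continuity of $h$ on $[0,T]$ (cf.\ Remark \ref{Rmk-h-cont}).

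\textbf{Step 1 (pointwise segments).} For each $y=(t,x)\in\bO$, we have $w_0(y)\in\U_t$. Lemma \ref{line-segment-lemma} gives $\overline{w}(y)=(\overline{\rb},\overline{\Eta},\overline{\vb},\overline{\bz},0)\in\Lambda$ such that $w_0(y)\pm\overline{w}(y)\in\U_t$ and
\[
|(\overline{\vb},\overline{\rb})|\ge C\bigl(h^2(t)+1-|\vb_0(y)|^2-|\rb_0(y)|^2\bigr).
\]
By continuity of $w_0$ and $h$, and openness of $\U_t$, which varies continuously in $t$ since $K_t$ depends continuously on $h(t)$, the following holds. For $r$ small, on each cube $Q$ of side $r$ in a grid covering $\bO$, there is one fixed $\overline{w}_Q$, chosen at the center $y_Q$ of $Q$, such that a slightly shrunken segment $w_0(y)+s\overline{w}_Q$, $|s|\le 1-\delta$, together with its $\eps$-neighbourhood, stays in $\U_t$ for every $(t,x)\in Q$. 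We arrange this by compactness: $\overline{\U_{t'}}$ and $w_0$ vary little over $Q$. This step is the main obstacle, because the target set moves with $t$. I would first quantify the margin via the distance from the segment to $\partial\U_{t}$, which is positive at $y_Q$. I would then choose $r$ so that the oscillations of $h$ and $w_0$ on $Q$ are less than half this margin, shrinking $\overline{w}_Q$ by a factor $1/2$ if necessary. This only costs a constant in $\beta$.

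\textbf{Step 2 (perturbation).} We need $\overline{w}_Q e_{n+1}\neq\0$ to apply Proposition \ref{Perturbation-proposition}. This holds whenever $(\overline{\vb},\overline{\rb})\neq 0$; if it vanishes, the lower bound shows we are already at the boundary and nothing needs to be done. Apply Proposition \ref{Perturbation-proposition} with $\hat V=\overline{w}_Q$ and rescale to a ball $B_{r_Q}(y_Q)\subset Q$ covering a fixed fraction of $Q$. Set $w_k=w_0+\sum_Q V_{Q,k}$, where the $V_{Q,k}$ use frequencies $N=k\to\infty$. The properties give the following. \eqref{P1} gives compact support in $\bO$. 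Divergence-freeness preserves \eqref{DE}. \eqref{P2} together with Step 1 ensures $w_k(t,x)\in\U_t$. Hence $w_k\in\X_0$. Oscillation as $k\to\infty$ gives $V_{Q,k}\rightharpoonup 0$ weakly$^\ast$, which yields \eqref{3p1}.

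\textbf{Step 3 (energy gain).} Expand
\[
\|\vb_k\|^2+\|\rb_k\|^2=\|\vb_0\|^2+\|\rb_0\|^2+2\langle(\vb_0,\rb_0),(\tilde\vb_k,\tilde\rb_k)\rangle+\|\tilde\vb_k\|^2+\|\tilde\rb_k\|^2,
\]
where $(\tilde\vb_k,\tilde\rb_k)$ denote the $\vb$- and $\rb$-components of $\sum_Q V_{Q,k}$. The cross term tends to $0$ by weak convergence. By Cauchy--Schwarz and \eqref{P3},
\[
\|\tilde\vb_k\|^2\ge |Q|^{-1}\Bigl(\int_Q|\tilde\vb_k|\Bigr)^2\gtrsim \alpha^2|Q|\,|\overline{\vb}_Q|^2 ,
\]
and similarly for $\tilde\rb_k$. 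Summing over $Q$ and using the lower bound from Step 1 gives
\[
\gtrsim \sum_Q|Q|\bigl(h^2(t_Q)+1-|\vb_0(y_Q)|^2-|\rb_0(y_Q)|^2\bigr)^2 .
\]
By Riemann-sum approximation, for $r$ small this is at least
\[
\tfrac12\int_{\bO}\bigl(h^2+1-|\vb_0|^2-|\rb_0|^2\bigr)^2\,dy .
\]
Jensen's inequality then bounds this below by $|\bO|^{-1}\bigl(\|h\|_{L^2}^2|\bO_x|+|\bO|-\|\vb_0\|^2-\|\rb_0\|^2\bigr)^2$. Absorbing $|\bO|$ and the constants $C,\alpha$ into $\beta$, and discarding the vanishing cross term for $k$ large, gives \eqref{eqn-inequality}.
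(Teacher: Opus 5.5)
Your argument is correct and follows the paper's skeleton: pointwise $\Lambda$-segments from Lemma \ref{line-segment-lemma}, localized plane waves from Proposition \ref{Perturbation-proposition}, a Riemann-sum comparison with the deficit $f=h^2+1-|\vb_0|^2-|\rb_0|^2$, and cross terms removed by weak$^\ast$ convergence. It differs from the paper in two places.

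\textbf{Weak$^\ast$ convergence.} The paper obtains \eqref{3p1} by placing rescaled copies of one fixed perturbation in disjoint balls of radius $<1/k$. Weak$^\ast$ convergence then comes from the shrinking supports alone, since each copy has zero mean: a compactly supported divergence-free field $F$ satisfies $\int F_j\,\mathrm{d}y=\int F\cdot\nabla y_j\,\mathrm{d}y=0$. So the Proposition is used purely as a black box. You instead keep the grid fixed and send the internal frequency $N$ of the construction to infinity. This works, but it needs more than the Proposition states. You need a family $V_N\rightharpoonup 0$ for which the $\eps$ in (P2) tends to $0$ and (P3) holds with one $\alpha$ for all large $N$. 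That is true of the construction, since up to the fixed affine maps of the covering the fields are $\varphi\,\what{V}\sin(N\,\cdot)+O(1/N)$. You should say so explicitly, or simply subdivide each cube into ever smaller balls as the paper does.

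\textbf{Energy gain.} The paper applies $\|g\|_{\mL^2(\bO)}^2\ge|\bO|^{-1}\|g\|_{\mL^1(\bO)}^2$ globally and Riemann-sums $f$ itself. Your cube-by-cube Cauchy--Schwarz gives the slightly stronger intermediate bound $\gtrsim\int_{\bO}f^2$ before Jensen. Both routes end with $\beta\sim\alpha^2C^2/|\bO|$.

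\textbf{Step 1.} Your Step 1 is more careful than the paper's, which invokes uniform continuity without noting that $\U_t$ moves with $t$. To avoid circularity, you need a margin independent of $y_Q$ before choosing $r$, and convexity supplies it. If $w\pm\bar w\in\U_t$ and $B_\rho(w)\subset\U_t$, then $w+s\bar w+e=\tfrac12(w+2e)+\tfrac12(w+2s\bar w)\in\U_t$ for $|s|\le\tfrac12$ and $|e|<\rho/2$. Moreover $\rho=\mathrm{dist}(w_0(y),\partial\U_t)$ is bounded below on $\overline{\bO}$. This is because $K_t$ is the image of the $h\equiv 1$ set under $(\rb,\Eta,\vb,\bz)\mapsto(\rb,h(t)\Eta,h(t)\vb,h(t)^2\bz)$, and $h$ is continuous and positive.

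\textbf{Minor points.} Use only cubes contained in $\bO$, so that the supports stay in $\bO$. Also, the degenerate case $(\bar\vb,\bar\rb)=0$ never arises, since $|\vb_0|<h(t)$ and $|\rb_0|<1$ at interior points of $K_t^{\mathrm{co}}$.
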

\begin{proof}
\textbf{Step 1.} Let us choose and fix $z_0 := (\rb_0, \Eta_0, \vb_0, \bz_0, q_0) \in\X_0$. Then, $\mathrm{Im}(z_0) = \{(t,x)\in\bO : (\rb_0, \Eta_0, \vb_0, \bz_0, q_0)(t,x)\} \subset \U_t$ is a compact set. By applying Lemma \ref{line-segment-lemma} to each element of $\mathrm{Im}(z_0)$, we obtain that there exists a direction
\begin{align*}
\overline{z}(t,x) := (\overline{\rb}(t,x), \overline{\Eta}(t,x), \overline{\vb}(t,x), \overline{\bz}(t,x), 0)\in \Lambda
\end{align*}
such that the line segment with endpoints $z_0(t,x) \pm \overline{z}(t,x)$ is contained in $\U_t$, and
\begin{align}
|\overline{\vb}(t,x)| + |\overline{\rb}(t,x)|  \geq C ((h^2(t)+1) - (\abs{\vb_0(t,x)}^2 + \abs{\rb_0(t,x)}^2)).\label{Convergence-lemma-1}
\end{align}
Together with the uniform continuity of $(\rb_0, \Eta_0, \vb_0, \bz_0, q_0)(\cdot,\cdot)$, it follows that there exists $\eps>0$ such that for any $(t,x), (t_0,x_0)\in \bO$ with
\[\abs{x- x_0} + \abs{t-t_0} < \eps,\]
and the $\eps$-neighborhood of the line segment with endpoints $z_0(t,x) \pm \overline{z}(t_0,x_0)$ is also contained in $\U_t$.
\vskip 2mm
\noindent
\textbf{Step 2.} Fixing $(t_0,x_0) \in \bO$ and applying Proposition \ref{Perturbation-proposition} to
\begin{align*}
(\rb_0, \Eta_0, \vb_0, \bz_0, q_0)(t_0, x_0) \in \Lambda,
\end{align*}
it follows that, for every $\eps>0$ there exists a smooth solution $(\rb, \Eta, \vb, \bz, q)$ of \eqref{DE} satisfying properties \eqref{P1}--\eqref{P3}. Furthermore, for any $r<\eps,$ consider
\begin{align*}
(\rb_r, \Eta_r, \vb_r, \bz_r, q_r)(t,x) = (\rb, \Eta, \vb, \bz, q) \left(\frac{t-t_0}{r}, \frac{x-x_0}{r}\right).
\end{align*}
Then, $(\rb_r, \Eta_r, \vb_r, \bz_r, q_r)$ is also a solution of \eqref{DE}, which satisfy the following properties:
\begin{enumerate}[label=(p\arabic*), ref=p\arabic*]
\item\label{p1} $\supp (\rb_r, \Eta_r, \vb_r, \bz_r, q_r) \subset B_r(t_0,x_0) \subset [0,\infty)\times \R^n$,
\item\label{p2} $\mathrm{Im}(\rb_r, \Eta_r, \vb_r, \bz_r, q_r)$ is contained in the $\eps$-neighborhood of the line segment with endpoints $\pm (\overline{\rb}, \overline{\Eta}, \overline{\vb}, \overline{\bz}, 0)(t_0,x_0)$,
\item\label{p3} $\displaystyle\int_{B_r(t_0,x_0)} \abs{\vb_r(t,x)} \,\mathrm{d} x\,\mathrm{d} t \geq \alpha\abs{\overline{\vb}(t_0,x_0)}\abs{B_r(t_0,x_0)}$\\  and $\displaystyle \int_{B_r(t_0,x_0)} \abs{\rb_r(t,x)} \,\mathrm{d} x\,\mathrm{d} t \geq \alpha\abs{\overline{\rb}(t_0,x_0)}\abs{B_r(t_0,x_0)}$.
\end{enumerate}
From the above three properties, it is clear that the line segment with endpoints $z_0(t,x) \pm \overline{z}(t_0,x_0)$ is contained in $\U_t$, i.e.,  for any $ r<\eps,$ we have
\begin{align*}
(\rb_0, \Eta_0, \vb_0, \bz_0, q_0)+(\rb_r, \Eta_r, \vb_r, \bz_r, q_r) \in \X_0.
\end{align*}
\textbf{Step 3.}  Now, by utilizing the uniform continuity of $\vb_0$ and $\rb_0$, we can choose a radius $r_0>0$ such that, for any $r<r_0$, there exists a finite family of pairwise disjoint balls $B_{r_j}(t_j,x_j) \subset \bO$ with $r_j<r$ the following inequality holds:
\begin{align}
& \int_{\bO} ((h^2(t)+1) - (\abs{\vb_0(t,x)}^2 + \abs{\rb_0(t,x)}^2))\,\mathrm{d} x \,\mathrm{d} t\\
& \leq 2 \sum_{j} \big((h^2(t_j)+1) - (\abs{\vb_0(t_j,x_j)}^2 + \abs{\rb_0(t_j,x_j)}^2) \big) \abs{B_{r_j}(t_j,x_j)}.\label{Convergence-lemma-2}
\end{align}
Let us fix $k\in\N$, so that $\displaystyle\frac{1}{k} < \min\{r_0, \eps\}$, and choose a finite family of pairwise disjoint balls $B_{r_{k,j}}(t_{k,j},x_{k,j}) \subset \bO$ with radii $\displaystyle r_{k,j} < \frac{1}{k}$ such that \eqref{Convergence-lemma-2} holds true.

Thus, by applying the above construction in each ball $B_{r_{k,j}}(t_{k,j},x_{k,j})$, we obtain a sequence of smooth solutions $(\rb_{k,j}, \Eta_{k,j}, \vb_{k,j}, \bz_{k,j}, q_{k,j})$ of \eqref{DE}, which satisfy all the three properties, i.e., \eqref{p1}--\eqref{p3}. Thus, in particular, it follows that
\begin{equation*}
(\rb_k, \Eta_k, \vb_k, \bz_k, q_k) := (\rb_0, \Eta_0, \vb_0, \bz_0, q_0)+ \sum_j (\rb_{k,j}, \Eta_{k,j}, \vb_{k,j}, \bz_{k,j}, q_{k,j}) \in \X_0. 
\end{equation*}
In particular, from the property \eqref{p3}, and the estimates \eqref{Convergence-lemma-1} and  \eqref{Convergence-lemma-2}, we assert
\begin{align}
\int_{\bO} &(\abs{\vb_k(t,x) - \vb_0(t,x)}  + \abs{\rb_k(t,x) - \rb_0(t,x)}) \,\mathrm{d} x \,\mathrm{d} t\\
& = \sum_j \int_{B_{r_{k,j}}(t_{k,j},x_{k,j})} (\abs{\vb_{k,j}(t,x)} + \abs{\rb_{k,j}(t,x)}) \,\mathrm{d} x \,\mathrm{d} t\\
& \geq \alpha \sum_j \big( \abs{\overline{\vb}(t_{k,j},x_{k,j})} + |\overline{\rb}( t_{k,j},x_{k,j})| \big) |B_{r_{k,j}}(t_{k,j},x_{k,j})| \\
& \geq C\alpha \sum_j  ((h^2(t_{k,j})+1) - (\abs{\vb_0(t_{k,j},x_{k,j})}^2 + \abs{\rb_0(t_{k,j},x_{k,j})}^2)) |B_{r_{k,j}}(t_{k,j},x_{k,j})| \\
& \geq \frac{C\alpha}{2} \int_{\bO} ((h^2(t) +1) - (\abs{\vb_0(t,x)}^2 + \abs{\rb_0(t,x)}^2))\,\mathrm{d} x \,\mathrm{d} t. \label{Convergence-lemma-4}
\end{align}
Since $\int_{B_{r_{k,j}}(t_{k,j},x_{k,j})}(\rb_{k,j}, \Eta_{k,j}, \vb_{k,j}, \bz_{k,j}, q_{k,j})\,\mathrm{d} x \,\mathrm{d} t\to 0$ as $k\to\infty$, the above construction produces
a sequence $\{(\rb_k, \Eta_k, \vb_k, \bz_k, q_k)\}_{k\in\N}\in \X_0$ such that
\begin{align}\label{3p4}
(\rb_k, \Eta_k, \vb_k, \bz_k, q_k)\xrightharpoonup[k\to\infty]{\ast} (\rb_0, \Eta_0, \vb_0, \bz_0, q_0) \ \mbox{ in }\ \Lb^\infty(\bO).
\end{align}
\textbf{Step 4.} Furthermore, thanks to the estimate \eqref{Convergence-lemma-4} that yields
\begin{align*}
\left\| \vb_k\right\|_{\mL^2(\bO)}^2 + \left\| \rb_k\right\|_{\mL^2(\bO)}^2
& = \left\|\vb_0\right\|_{\mL^2(\bO)}^2 + \left\|\rb_0\right\|_{\mL^2(\bO)}^2
+\left\|\vb_k - \vb_0 \right\|_{\mL^2(\bO)}^2+\left\|\rb_k - \rb_0 \right\|_{\mL^2(\bO)}^2
\\&\quad+ 2 \left(\vb_0, \vb_k - \vb_0\right) +  2 \left(\rb_0, \rb_k - \rb_0\right)
\\
& \geq \left\|\vb_0\right\|_{\mL^2(\bO)}^2 + \left\|\rb_0\right\|_{\mL^2(\bO)}^2 + \frac{1}{|\bO|} \left\|\vb_k - \vb_0 \right\|_{\mL^1(\bO)}^2 + \frac{1}{|\bO|}  \left\|\rb_k - \rb_0 \right\|_{\mL^1(\bO)}^2\\
&\quad+ 2 \left(\vb_0, \vb_k - \vb_0\right) +  2 \left(\rb_0, \rb_k - \rb_0\right)\\
& \geq \left\|\vb_0\right\|_{\mL^2(\bO)}^2 + \left\|\rb_0\right\|_{\mL^2(\bO)}^2 + \frac{1}{2|\bO|} \big(\left\|\vb_k - \vb_0 \right\|_{\mL^1(\bO)} + \left\|\rb_k - \rb_0 \right\|_{\mL^1(\bO)}\big)^2\\ 
&\quad+ 2 \left(\vb_0, \vb_k - \vb_0\right) +  2 \left(\rb_0, \rb_k - \rb_0\right)\\
& \geq \left\|\vb_0\right\|_{\mL^2(\bO)}^2 + \left\|\rb_0\right\|_{\mL^2(\bO)}^2\\
&\quad + \frac{\alpha^2C^2}{8|\bO|} \left(\int_{\bO} ((h^2(t) +1) - (\abs{\vb_0}^2 + \abs{\rb_0(t,x)}^2))\mathrm{d} x \,\mathrm{d} t \right)^2\\ 
&\quad+ 2 \left(\vb_0, \vb_k - \vb_0\right) +  2 \left(\rb_0, \rb_k - \rb_0\right)\\
& = \left\|\vb_0\right\|_{\mL^2(\bO)}^2 + \left\|\rb_0\right\|_{\mL^2(\bO)}^2\\
&\quad + \frac{\alpha^2C^2}{8|\bO|} \left( |\bO_x|\int_0^Th^2(t)dt + |\bO| - (\left\|\vb_0\right\|_{\mL^2(\bO)}^2 + \left\|\rb_0\right\|_{\mL^2(\bO)}^2) \right)^2 \\
&\quad+ 2 \left(\vb_0, \vb_k - \vb_0\right) +  2 \left(\rb_0, \rb_k - \rb_0\right).
\end{align*}
Finally, by taking liminf on both sides in the above inequality and utilizing the convergence \eqref{3p4}, we infer
\begin{align*}
& \liminf_{k\to\infty}(\left\| \vb_k\right\|_{\mL^2(\bO)}^2 + \left\| \rb_k\right\|_{\mL^2(\bO)}^2)\\
& \geq \left\|\vb_0\right\|_{\mL^2(\bO)}^2 + \left\|\rb_0\right\|_{\mL^2(\bO)}^2 + \frac{\alpha^2C^2}{8|\bO|} \left( (|\bO_x| \|h\|_{L^2(0,T)}^2 +|\bO|) - (\left\|\vb_0\right\|_{\mL^2(\bO)}^2 + \left\|\rb_0\right\|_{\mL^2(\bO)}^2) \right)^2,
\end{align*}
where $\beta = \frac{\alpha^2C^2}{8|\bO|}$. It complete the proof.
\end{proof}

Now, we are in position to employ the Baire category method by using the geometric tools developed in the previous Subsection \ref{Sec-Geo setup}, which shows that there exists infinitely many solutions to the problem \eqref{RDDEE}, that eventually proves Theorem \ref{Main-result-Random-PDE}.

Let us begin by fixing a metric $d_{\infty}^{\ast}$ inducing the $w^\ast$ topology of $\Lb^\infty$ in $\X$, so that $(\X, d_{\infty}^{\ast})$ is a complete metric space.

\begin{definition}[{\cite[Definition 4.5]{LSJ-13} or \cite[p.\;57]{SL-88}}]
In a metric space $\X$, a map $J: \X \to \R$ is called a \emph{Baire-1} if it is a pointwise limit of continuous functions.
\end{definition}

\begin{definition}[{\cite[p.\;2]{JCO-80}}]
A set is said to be of \emph{first category} (or \emph{meagre}) if it can be represented as a countable union of nowhere dense sets.
\end{definition}

\begin{definition}[{\cite[p.\;41]{JCO-80} or \cite[p.\;14]{LSJ-13}}]
A topological space $\X$ is called a \emph{Baire space} if every non-empty open set in $\X$ is of second category, or equivalently; if the complement of every set of first category is dense. In a Baire space, the complement of any set of first category is called a \emph{residual} set.
\end{definition}

The following result is one of the novel results, motivated from \cite[Lemma 4.5]{CDL+LSJ-09}, of this work. It shows that the set of points of continuity is dense in $(\X, d_\infty^\ast)$.

\begin{lemma}\label{lem-Baire function}
The identity map
\begin{equation}\label{Def-I}
I : (\X, d_\infty^\ast) \to \Lb^2([0,\infty)\times \R^n)
\end{equation}
is a Baire-1 map and therefore the set of points of continuity is residual in $(\X, d_\infty^\ast)$.
\end{lemma}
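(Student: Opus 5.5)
We follow the standard argument of \cite[Lemma 4.5]{CDL+LSJ-09}: exhibit $I$ as a pointwise limit of continuous maps $I_\rho:(\X,d_\infty^\ast)\to \Lb^2([0,\infty)\times\R^n)$, and then invoke the classical theorem (see \cite[p.~57]{SL-88} or \cite[Definition 4.5 and the discussion following it]{LSJ-13}) that a Baire-1 map from a complete metric space, or more generally a Baire space, into a metric space has a residual set of points of continuity. By Lemma \ref{Equivalent-lemma}, $(\X,d_\infty^\ast)$ is a compact metrizable space, hence complete and Baire, so only the Baire-1 property needs proof.

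\textbf{Construction of the approximating maps.} Fix a standard mollifier $\phi\in \Cb_0^\infty(\R^{n+1})$ with $\phi\ge0$ and $\int\phi=1$, and set $\phi_\rho(y)=\rho^{-(n+1)}\phi(y/\rho)$. For $z=(\rb,\Eta,\vb,\bz,q)\in\X$, define $I_\rho(z):=z\ast\phi_\rho$, where $z$ is extended by zero outside $\bO$ (every element of $\X$ is supported in $\overline{\bO}$).

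\textbf{Continuity of $I_\rho$.} Suppose $z_k\to z$ in $d_\infty^\ast$, i.e. $z_k\overset{\ast}{\rightharpoonup} z$ in $\Lb^\infty$. Then for each $y$ we have $(z_k\ast\phi_\rho)(y)=\int z_k(y')\phi_\rho(y-y')\,\mathrm{d}y'\to (z\ast\phi_\rho)(y)$, because $\phi_\rho(y-\cdot)\in\mL^1$. Moreover the family $\{z_k\}$ is uniformly bounded in $\Lb^\infty$, since every $\U_t\subset\overline{\bigcup_{t\in[0,T]}\U_t}$ and this union is bounded because $h$ is bounded on $[0,T]$ (Proposition \ref{prop-U}). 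It follows that the $z_k\ast\phi_\rho$ are uniformly bounded and all supported in the fixed compact $\rho$-neighbourhood of $\overline{\bO}$. Dominated convergence then gives $I_\rho(z_k)\to I_\rho(z)$ in $\Lb^2$. This argument in fact yields uniform Lipschitz bounds on $z\ast\phi_\rho$, but pointwise convergence with domination already suffices.

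\textbf{Pointwise convergence.} For fixed $z\in\X$ we have $z\in\Lb^2$, because it is bounded with compact support. Hence $z\ast\phi_\rho\to z$ in $\Lb^2$ as $\rho\to0$, and taking $\rho=1/m$ shows that $I$ is the pointwise limit of the continuous maps $I_{1/m}$.

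\textbf{Main obstacle.} The only delicate point is the uniform $\Lb^\infty$ bound on $\X$ needed for the dominated convergence step. It must be independent of $t$, which uses $\sup_{[0,T]}h<\infty$ from the continuity of $h$. One should also check that $d_\infty^\ast$ restricted to this bounded set really metrizes weak$^\ast$ convergence, which is the content of \cite[Theorem 3.16]{WR-73} as used in Lemma \ref{Equivalent-lemma}. Once these are in place, the residuality of the continuity points follows directly from the Baire-1 theorem.
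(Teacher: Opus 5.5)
Your proposal is correct and follows essentially the same route as the paper: you mollify to obtain maps $I_r$ that are continuous from $(\X,d_\infty^\ast)$ to $\Lb^2$, observe that $I_r(z)\to z$ in $\Lb^2$ for each $z\in\X$, and invoke the Baire-1 theorem on the compact metric space $\X$. Your pointwise-plus-dominated-convergence argument makes explicit the paper's terse ``compact operator'' justification of the continuity of $I_r$, and the uniform bound you flag as the delicate point is in fact automatic, since weak$^\ast$-convergent sequences in $\mL^\infty$ are norm-bounded by the uniform boundedness principle.
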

\begin{proof}
Let us consider a regular space-time convolution kernel as
\[\phi_r(t,x) = r^{-(n+1)} \phi(rt,rx),\ r>0.\]
Let us fix $r>0$ and define a function 
$I_r: (\X, d_\infty^\ast) \to \Lb^2([0,\infty)\times \R^n)$  by
\begin{align*}
I_r(\rb, \Eta, \vb, \bz, q) := (\phi_r\ast \rb, \phi_r\ast \Eta, \phi_r\ast \vb, \phi_r\ast\bz, \phi_r\ast q)
\end{align*}
\textit{Claim:} $I_r$ is continuous. 

From Lemma \ref{Equivalent-lemma} note that the space $(\X, d_\infty^\ast)$ is compact and metrizable. Thus, it is sufficient to prove that  $I_r$ is sequentially continuous.
Since the convolution kernel $\phi_r\ast(\cdot)$ on $(\X, d_\infty^\ast)$ is a compact operator, for any $\X$-valued sequence  $\{(\rb_k, \Eta_k, \vb_k, \bz_k, q_k)\}_{k\in\N}$ such that  
\begin{align*}
(\rb_k, \Eta_k, \vb_k, \bz_k, q_k) \xrightharpoonup[k\to\infty]{} (\rb, \Eta, \vb, \bz, q) \ \ \mbox{  $w^\ast$ in  }\  \Lb^\infty([0,\infty)\times \R^n),
\end{align*}
by sequential compactness, it follows that
\begin{align*}
I_r(\rb_k, \Eta_k, \vb_k, \bz_k, q_k) 
\xrightarrow[k\to\infty]{} I_r(\rb, \Eta, \vb, \bz, q) \ \mbox{ in } \ \Lb^2([0,\infty)\times \R^n)).
\end{align*}
This proves that the map $I_r: (\X, d_\infty^\ast) \to \Lb^2([0,\infty)\times \R^n)$ is continuous.
Furthermore, 
\begin{align*}
I(\rb, \Eta, \vb, \bz, q)  = \lim_{r\to0} I_r(\rb, \Eta, \vb, \bz, q), \ \mbox{ for every }\  (\rb, \Eta, \vb,\bz, q) \in \X.
\end{align*}
Therefore, the identity map $I : (\X, d_\infty^\ast) \to \Lb^2([0,\infty)\times \R^n)$ is a pointwise limit of sequence of continuous maps, hence it is a Baire-1 map. Thus, it follows from \cite[Theorem 4.6]{LSJ-13}, its proof relies on \cite[Theorem 1.3]{JCO-80}, that the set of points of continuity of $I$ is residual in $(\X, d_\infty^\ast)$, which completes the proof.
\end{proof}

The next lemma provides a bridge between the points of continuity of $I$ and the weak solutions of the PDE \eqref{RDDEE-T} (as well as \eqref{RDDEE}) for almost every fixed path.

\begin{lemma}\label{Continuity-lemma}
If $(\rb, \Eta, \vb, \bz, q)\in \X$ is a point of continuity of $I$, then
\begin{equation}
\abs{\vb(t,x)} = h(t) \ \mbox{ and } \ \abs{\rb(t,x)} =1 \ \mbox{ for a.e. }\  (t,x) \in \bO.\label{Continuity-lemma-1}
\end{equation}
\end{lemma}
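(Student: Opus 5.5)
The plan is the standard De Lellis--Sz\'ekelyhidi argument: a point of continuity of $I$ has to saturate the convex-hull constraint, because otherwise Lemma \ref{Convergence-lemma} would produce weakly-$\ast$ converging perturbations whose $\mL^2$ norm jumps by a definite amount, and this contradicts continuity. Fix a point of continuity $z=(\rb,\Eta,\vb,\bz,q)\in\X$. Since $\X$ is the $w^\ast$-closure of $\X_0$ and $(\X,d_\infty^\ast)$ is metrizable, I will first choose a sequence $z_m=(\rb_m,\Eta_m,\vb_m,\bz_m,q_m)\in\X_0$ with $z_m\to z$ in $d_\infty^\ast$. Continuity of $I$ at $z$ then gives $z_m\to z$ strongly in $\Lb^2$. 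In particular
\[
\|\vb_m\|_{\mL^2(\bO)}^2+\|\rb_m\|_{\mL^2(\bO)}^2\to\|\vb\|_{\mL^2(\bO)}^2+\|\rb\|_{\mL^2(\bO)}^2 .
\]

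Next I apply Lemma \ref{Convergence-lemma} to each $z_m$. This yields a sequence $z_{m,k}\in\X_0$ with $z_{m,k}\rightharpoonup z_m$ weakly-$\ast$ as $k\to\infty$, and
\[
\liminf_{k}\big(\|\vb_{m,k}\|^2+\|\rb_{m,k}\|^2\big)\ge \|\vb_m\|^2+\|\rb_m\|^2+\beta\big(E_h-\|\vb_m\|^2-\|\rb_m\|^2\big)^2,
\]
where $E_h:=\|h\|_{L^2(0,T)}^2|\bO_x|+|\bO|$ and all norms are taken in $\mL^2(\bO)$. Using a diagonal argument with the metric $d_\infty^\ast$, I pick $k_m$ such that $d_\infty^\ast(z_{m,k_m},z_m)<1/m$ and the norm of $z_{m,k_m}$ exceeds the right-hand side minus $1/m$. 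Then $z_{m,k_m}\to z$ in $d_\infty^\ast$, so continuity of $I$ gives $z_{m,k_m}\to z$ in $\Lb^2$. Passing to the limit $m\to\infty$ yields
\[
\|\vb\|^2+\|\rb\|^2\ge\|\vb\|^2+\|\rb\|^2+\beta\big(E_h-\|\vb\|^2-\|\rb\|^2\big)^2,
\]
and hence $\|\vb\|_{\mL^2(\bO)}^2+\|\rb\|_{\mL^2(\bO)}^2=E_h=\int_{\bO}(h^2(t)+1)\,\mathrm{d}x\,\mathrm{d}t$.

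To conclude, I argue pointwise. Step 2 of Lemma \ref{Equivalent-lemma} shows that $z(t,x)\in\overline{\U_t}$ for a.e. $(t,x)\in\bO$, so $(\rb,\vb)(t,x)$ lies in the projection of $K_t^{\mathrm{co}}$. That projection is contained in $\{|\vb|\le h(t)\}\times[-1,1]$. Hence $|\vb|^2\le h^2(t)$ and $|\rb|^2\le 1$ a.e. Combined with the equality of the integrals, both inequalities must hold with equality a.e., which is exactly \eqref{Continuity-lemma-1}.

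The step I expect to be the main obstacle is the limit in the quadratic term. I have to check that $\|\vb_m\|^2+\|\rb_m\|^2$ converges, which is handled by the strong convergence coming from continuity, and that the diagonal sequence is legitimate, which needs the uniform boundedness of $\X$ in $\Lb^\infty$ so that $d_\infty^\ast$ metrizes weak-$\ast$ convergence there. I also need to take care that the $t$-dependence of $\U_t$ is handled consistently a.e. in $t$; here the continuity of $h$ (Remark \ref{Rmk-h-cont}) is what makes the pointwise bound $|\vb(t,x)|\le h(t)$ meaningful.
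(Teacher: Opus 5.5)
Your proposal is correct and follows essentially the same route as the paper. You approximate by elements of $\X_0$, apply Lemma \ref{Convergence-lemma} to each approximant, and extract a diagonal sequence converging in $d_\infty^\ast$ to the given point. You then use continuity of $I$ twice to pass to the limit in the energy inequality, and conclude from the pointwise bounds $|\vb|\le h(t)$, $|\rb|\le 1$. Your choice of $k_m$, which puts the norm of $z_{m,k_m}$ within $1/m$ of the right-hand side, is in fact slightly more careful than the paper's Step 2, where a single fixed index $j_0(k)$ is compared directly with a $\liminf$ over $j$.
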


\begin{remark}\label{Rmk-equi}
Since for each $(\rb, \Eta, \vb, \bz, q)\in \X,$  it holds that
\begin{align*}
\abs{\vb(t,x)} \leq h(t)\ \mbox{ and }\ \abs{\rb(t,x)} \leq 1\ \mbox{ for a.e. }\ (t,x) \in \bO,
\end{align*}
Thus, \eqref{Continuity-lemma-1} is equivalent to 
\begin{equation}\label{Continuity-lemma-1-1}
\Vert\vb\Vert_{\mL^2(\bO)}^2 = \|h\|_{L^2(0,T)}^2 \abs{\bO_x}\ \mbox{ and }\ \Vert\rb\Vert_{\mL^2(\bO)}^2 = \abs{\bO}.
\end{equation}
\end{remark}

\begin{proof}[Proof of Lemma \ref{Continuity-lemma}]
Let us choose and fix $(\rb, \Eta, \vb, \bz, q)\in \X$, a point of continuity of $I$. Then, our aim is to show that \eqref{Continuity-lemma-1-1} holds.

\noindent
\textbf{Step 1.} By the definition of $\X$, $(\rb, \Eta, \vb,\bz,q)$ is a $w^\ast$ limit of a sequence in $\X_{0}$, i.e., there exists 
$\{(\rb_k ,\Eta_k, \vb_k,\bz_k,q_k)\}_{k\in\N}\subset\X_{0}$,  
such that
\begin{equation}\label{eqn-weak-conv-1}
(\rb_k, \Eta_k, \vb_k,\bz_k,q_k)\xrightharpoonup[k\to\infty]{} (\rb, \Eta, \vb,\bz,q)\ \  w^\ast\ \text{ in }\ \Lb^\infty(\bO).
\end{equation}
Since $(\rb, \Eta, \vb,\bz,q)$ is point of continuity of $I$, see \eqref{Def-I}, by sequential definition of continuity, it follows that
\begin{align}\label{Continuity-lemma-2}
(\rb_k, \Eta_k, \vb_k,\bz_k,q_k)\xrightarrow[k\to\infty]{}(\rb, \Eta, \vb,\bz,q) \ \mbox{ in }\  \Lb^2(\bO).
\end{align}
Next, by applying Lemma \ref{Convergence-lemma}, for each element $(\rb_k, \Eta_k, \vb_k,\bz_k,q_k)\in\X_{0}$, we find a subsequence 
\[ \big\{(\rb_{k,j}, \Eta_{k,j}, \vb_{k,j},\bz_{k,j},q_{k,j})\big\}_{j\in\N}\in\X_0,\]
such that
\begin{align}
(\rb_{k,j}, \Eta_{k,j}, \vb_{k,j},\bz_{k,j},q_{k,j}) \xrightharpoonup[j\to\infty]{} (\rb_k, \Eta_k, \vb_k,\bz_k,q_k)\ \  w^\ast\ \text{ in } \ \Lb^\infty(\bO),
\end{align}
and for some $\beta<\infty$, it satisfies
\begin{align}
&\hspace*{-2mm} \liminf_{j\to \infty}(\left\| \vb_{k,j}\right\|_{\mL^2(\bO)}^2 + \left\| \rb_{k,j}\right\|_{\mL^2(\bO)}^2)\\
&\hspace*{-2mm} \geq \left\|\vb_k\right\|_{\mL^2(\bO)}^2 + \left\|\rb_k\right\|_{\mL^2(\bO)}^2 + \beta \big( (\|h\|_{L^2(0,T)}^2|\bO_x| +|\bO|) - (\left\|\vb_k\right\|_{\mL^2(\bO)}^2 + \left\|\rb_k\right\|_{\mL^2(\bO)}^2) \big)^2.\label{step9-1}
\end{align}

\noindent
\textbf{Step 2.} Note from the definition of weak$^\ast$ convergence of a sequence that, there exists $j_0 = j_0(k) \in\N$ such that
\[d_\infty^\ast((\rb_{k,j}, \Eta_{k,j}, \vb_{k,j},\bz_{k,j},q_{k,j}), (\rb_{k}, \Eta_{k}, \vb_{k},\bz_{k},q_{k})) \leq \frac{1}{k}\  \text{ for every } \ j\geq j_0(k).\]
Thanks to the $w^\ast$ convergence \eqref{eqn-weak-conv-1} that yields
\begin{align*}
& d_\infty^\ast((\rb_{k,j_0}, \Eta_{k,j_0}, \vb_{k,j_0},\bz_{k,j_0},q_{k,j_0}), (\rb, \Eta, \vb,\bz,q))\\
& \leq	d_\infty^\ast((\rb_{k,j_0}, \Eta_{k,j_0}, \vb_{k,j_0},\bz_{k,j_0},q_{k,j_0}), (\rb_{k}, \Eta_{k}, \vb_{k},\bz_{k},q_{k}))\\
&\quad + d_\infty^\ast((\rb_{k}, \Eta_{k}, \vb_{k},\bz_{k},q_{k}), (\rb, \Eta, \vb,\bz,q))\\
& \leq
\frac{1}{k}  + d_\infty^\ast((\rb_{k}, \Eta_{k}, \vb_{k},\bz_{k},q_{k}), (\rb, \Eta, \vb,\bz,q)).
\end{align*}
Now, passing $k\to\infty$ and using the strong convergence \eqref{Continuity-lemma-2}
, we assert
\[(\rb_{k,j_0}, \Eta_{k,j_0}, \vb_{k,j_0},\bz_{k,j_0},q_{k,j_0}) \xrightharpoonup[k\to\infty]{} (\rb, \Eta, \vb,\bz,q)\ \  w^\ast\ \text{ in }\ \Lb^\infty(\bO).\]
But, again by the assumption, i.e., $(\rb, \Eta, \vb,\bz,q)$ is point of continuity of $I,$ it follows that
\begin{equation}\label{Continuity-lemma-3}
(\rb_{k,j_0}, \Eta_{k,j_0}, \vb_{k,j_0},\bz_{k,j_0},q_{k,j_0}) \xrightarrow[k\to\infty]{} (\rb, \Eta, \vb,\bz,q)\ \text{ in }\ \Lb^2(\bO).
\end{equation}
Therefore, inequality  \eqref{step9-1} implies that
\begin{align*}
\left\| \vb_{k,j_0}\right\|_{\mL^2(\bO)}^2 + \big\| \rb_{k,j_0}\big\|_{\mL^2(\bO)}^2
& \geq \liminf_{j\to \infty} \big(\left\| \vb_{k,j}\right\|_{\mL^2(\bO)}^2 + \left\| \rb_{k,j}\right\|_{\mL^2(\bO)}^2 \big)\\
& \geq \left\|\vb_k\right\|_{\mL^2(\bO)}^2 + \left\|\rb_k\right\|_{\mL^2(\bO)}^2 \\
&\quad + \beta \big( (\|h\|_{L^2(0,T)}^2 |\bO_x| +|\bO|) - \big(\left\|\vb_k\right\|_{\mL^2(\bO)}^2 + \left\|\rb_k\right\|_{\mL^2(\bO)}^2 \big) \big)^2.
\end{align*}
Thus, letting $k$ tends to $\infty$, and using the strong convergences \eqref{Continuity-lemma-2} and \eqref{Continuity-lemma-3} in the above inequality, we infer
\begin{align*}
0 &\geq \beta \big( (\|h\|_{L^2(0,T)}^2 |\bO_x| +|\bO|) - \big(\left\|\vb\right\|_{\mL^2(\bO)}^2 + \left\|\rb\right\|_{\mL^2(\bO)}^2\big) \big)^2,
\end{align*}
which further implies
\begin{align*}
0 = \big((\|h\|_{L^2(0,T)}^2 |\bO_x| +|\bO|) - \big(\left\|\vb\right\|_{\mL^2(\bO)}^2 + \left\|\rb\right\|_{\mL^2(\bO)}^2 \big)\big).
\end{align*}
Hence $\Vert\vb\Vert_{\mL^2(\bO)}^2 + \Vert\rb\Vert_{\mL^2(\bO)}^2 = \|h\|_{L^2(0,T)}^2 |\bO_x| +|\bO|$, but $\abs{\vb} \leq h(\cdot)$, $\abs{\rb} \leq 1$ in $\bO$ and the support of $\vb$ and $\rb$ is contained in $\bO$, it follows that
\begin{align*}
\abs{\vb} = h(\cdot)\mathds{1}_{\bO}\ \text{ and }\ \abs{\rb} = \mathds{1}_{\bO}  \iff \Vert\vb\Vert_{\mL^2(\bO)}^2  = \|h\|_{L^2(0,T)}^2 |\bO_x|\ \text{ and }\ \Vert\rb\Vert_{\mL^2(\bO)}^2 = \abs{\bO},
\end{align*}
which completes the proof.
\end{proof}

\begin{proof}[Proof of Theorem \ref{Main-result-Random-PDE}]
By Lemma \ref{Continuity-lemma}, every element of $\X$, which is also a point of continuity of $I$, see \eqref{Def-I}, satisfies
\begin{equation}\label{eqn-mode-1-prop}
\abs{\vb(t,x)} = h(t) \ \text{ and }\ \abs{\rb(t,x)} =1 \ \text{ for a.e. }\ (t,x) \in \bO.
\end{equation}
On the other hand, Lemma \ref{Equivalent-lemma} asserts that every element of $\X$ which satisfies the above property, i.e., \eqref{eqn-mode-1-prop}, solves the \eqref{RDDEE-T} (and \eqref{RDDEE}). Moreover, Lemma \ref{lem-Baire function} implies that the set of points of continuity of $I$ form a residual (dense) subset of $\X$. Hence, there exist infinitely many weak solutions to the PDE \eqref{RDDEE} for a.e. fixed $\omega \in \Omega$ , which completes the proof.
\end{proof}
\begin{remark}
Note that Theorem \ref{Main-result-Random-PDE} can be proved using the method of convex integration to produce infinitely many weak solutions of \eqref{RDDEE-T} as well as \eqref{RDDEE}, see Appendix \ref{CIT} for details.
\end{remark}

Since the random PDEs \eqref{RDDEE-T} and \eqref{RDDEE} admit infinitely many weak solutions, it suffices to map these solutions to corresponding solutions of the SPDE \eqref{DDEELMSN1}–\eqref{DDEELMSN4} in order to complete the proof of Theorem \ref{Main-result-Stochastic-PDE}. 


\subsection{Back to the SPDE}\label{Sec-Rescale}
In this subsection, we explain how to use a solution $\vb$ of the random PDEs \eqref{RDDEE-T} and  \eqref{RDDEE}, constructed via the Baire category technique in previous Subsection \ref{Sec-Baire category}, to recover a solution $\bu$ of the original system \eqref{DDEELMSN1}–\eqref{DDEELMSN2}. 
This can be done by the use of reverse of the transformations (formally justified), discussed in Subsection \ref{Sec-Transforming to RPDE}, to obtain the system \eqref{DDEELMSN1}-\eqref{DDEELMSN4}. 

First, let us recall a few abuses of notation used in the proof of Theorem \ref{Main-result-Random-PDE}, see Remarks \ref{Rmk-abuse-1} and \ref{Rmk-abuse-2}. 
In the case of transport noise, we denoted
\begin{equation}
	x = x + B(t),
\end{equation}
where $B$ is an $\R^n$-valued Brownian motion. 
Secondly, in the case of linear multiplicative noise, the time variable is in fact $\theta(t)$ rather than $t$, see \eqref{Time-transformation} for the definition of $\theta(\cdot)$.

\vspace{1mm}
\noindent
\textit{\underline{Transport noise}}. Let us choose and take $\vb$ to be a weak solution of the problem \eqref{RDDEE-T} guaranteed by Theorem \ref{Main-result-Random-PDE}. 
Then, we consider the following inverse transformation to get back to the SPDEs associated with transport noise:
\begin{equation*}
	\bu(t, x) =  \vb(t, x-B(t)).
\end{equation*}
Since the above transformation is simply a translation, i.e., a composition of $B(\cdot) \in \R^n$, a $\{\F_{t}\}_{t\geq 0}$-adapted process, with the following continuous function 
$$\int_{\R^n} u(t,x) \cdot \bvarphi (x) \d x = \int_{\R^n}  \vb(t,y) \bvarphi(y+\cdot) \d y,\ \text{ for all }\ \bvarphi \in \Cb^2(\R^n;\R^n),$$
where the continuity of the process $\int_{\R^n} \bu(t,x) \bvarphi (x) \d x $ follows from the weak continuity of $\vb(\cdot)$, see Remark \ref{Rmk-weak-c}. Hence the process in the left hand side is $\{\F_{t}\}_{t\geq 0}$-adapted. 
This allows us to recover that $\bu$ solves the original equations \eqref{DDEELMSN1} and \eqref{DDEELMSN2}, in the sense of Definition \ref{Weak-soln-T}. Consequently, we obtain infinitely many weak solutions of the random PDE corresponding to the SPDE \eqref{DDEELMSN1}–\eqref{DDEELMSN4} with transport noise.

\vspace{1mm}
\noindent
\textit{\underline{Linear multiplicative noise}.} Let us select and fix $\vb$ to be a weak solution to the problem \eqref{RDDEE} obtained in the previous section via Theorem \ref{Main-result-Random-PDE}. 
Then, consider the following time function
\begin{align}
[0,\infty) \ni \theta \mapsto \left\langle \vb, \bvarphi \right\rangle (\theta) \equiv \int_{\R^n}  \vb( \theta)\cdot \bvarphi\, \mathrm{d} x \in \R,\ \text{ for }\ \bvarphi \in \C_{0}^1 (\R^n; \R^n).
\end{align}
Observe that $\left\langle \vb, \bvarphi \right\rangle$ is a globally Lipschitz function in time, i.e., for all $\theta, \theta_0\in[0,\infty)$ (without loss of generality, let $\theta > \theta_0$), we have
\begin{align*}
&	\abs{\left\langle \vb, \bvarphi \right\rangle(\theta) - \left\langle \vb, \bvarphi \right\rangle(\theta_0)}
\\	& = \abs{ \int_{\R^n } \vb( \theta)\cdot \bvarphi \,\mathrm{d} x -  \int_{\R^n }\vb( \theta_0)\cdot \bvarphi \,\mathrm{d}x}\\
& =\bigg| \int_{\R^n } \vb( 0) \cdot\bvarphi\,\mathrm{d} x + \int_0^\theta \int_{\R^n} (\vb\otimes \vb): \nabla \bvarphi \,\mathrm{d}x \,\mathrm{d}\tau+ \int_0^\theta \int_{\R^n}p\, \di\ \bvarphi \,\mathrm{d}x \,\mathrm{d}\tau \\
& \qquad - \int_{\R^n } \vb( 0) \cdot\bvarphi\,\mathrm{d} x + \int_0^{\theta_0} \int_{\R^n} (\vb\otimes \vb): \nabla \bvarphi \,\mathrm{d}x \,\mathrm{d}\tau +\int_0^{\theta_0} \int_{\R^n}p\, \di\ \bvarphi \,\mathrm{d}x \,\mathrm{d}\tau\bigg|\\
& \leq M\left[ \int_{\theta_0}^\theta \int_{\R^n} \abs{(\vb\otimes \vb)} \,\mathrm{d}x \,\mathrm{d}\tau+\int_{\theta_0}^\theta \int_{\R^n} \abs{p} \,\mathrm{d}x \,\mathrm{d}\tau\right]\\
& \leq  \wtilde{M} \abs{\theta - \theta_0},
\end{align*}
where we have used  Definition \ref{Def-weak-rand} (weak formulation).
By an application of Rademacher's Theorem, see \cite[Theorem 2.2.1]{WPZ-12}, and abstract chain rule, see \cite{WPZ-12}, it  follows that for a.e. $t\geq 0$,
\begin{align*}
\frac{\mathrm{d}}{\mathrm{d}t}\left\langle \vb, \bvarphi \right\rangle
= \partial_t\int_{\R^n} \vb\Big( \int_{0}^{t} e^{-\gamma B(s)}\,\mathrm{d}s\Big)\cdot \bvarphi \,\mathrm{d} x = e^{-\gamma B(t)} \int_{\R^n } \vb_t( t) \cdot \bvarphi \,\mathrm{d} x.
\end{align*}
Once again using the weak formulation, we obtain
\begin{align}
\frac{\mathrm{d}}{\mathrm{d} t} \left\langle \vb, \bvarphi \right\rangle
& = e^{-\gamma B(t)} \int_{\R^n } [\vb( t)\otimes \vb ( t) : \nabla \bvarphi + p(t)\, \mathrm{div \, } \bvarphi]\, \mathrm{d} x  \label{Transformation-eqn-1}
\end{align}
and
\begin{align*}
\int_{\R^n } \vb ( t) \cdot \nabla\bvarphi \,\mathrm{d} x = 0.
\end{align*}
Now, using the basic properties of Itô's integral, i.e., if $b$ is Lipschitz, then it follows that
\begin{align}\label{Transformation-eqn-2}
\mathrm{d} (b e^{-\gamma B(t)}) = e^{-\gamma B(t)} \,\mathrm{d} b - \gamma b e^{-\gamma B(t)} \,\mathrm{d} B(t) + \frac{\gamma^2}{2}b e^{-\gamma B(t)} \,\mathrm{d}t.
\end{align}
By taking $b = \left\langle \vb, \bvarphi \right\rangle $ in \eqref{Transformation-eqn-2} and defining the required velocity 
\begin{equation}
\bu(t) := e^{-\gamma B(t)} \vb(t) \in \mathrm{C}_{w}([0,\infty); \mL^2(\R^n; \R\times \R^n)),
\end{equation}
we obtain 
\begin{align}
\mathrm{d} \int_{\R^n }  \vb(t) \cdot \bvarphi \,\mathrm{d} x 
& =  e^{\gamma B(t)}\bigg[\mathrm{d} \int_{\R^n } \bu(t) \cdot \bvarphi \,\mathrm{d} x + \gamma\int_{\R^n } \bu(t) \cdot \bvarphi \,\mathrm{d} x \,\mathrm{d} B(t)
\\
&\qquad\qquad - \frac{\gamma^2}{2}  \int_{\R^n} \bu(t) \cdot \bvarphi \,\mathrm{d} x \,\mathrm{d} t\bigg].\label{Transformation-eqn-3}
\end{align}
Then, comparing the RHS of \eqref{Transformation-eqn-1} and \eqref{Transformation-eqn-3}, we assert
\begin{align*}
e^{\gamma B(t)}\bigg[\mathrm{d} & \int_{\R^n } \bu(t) \cdot \bvarphi \,\mathrm{d} x + \gamma\int_{\R^n } \bu(t) \cdot \bvarphi \,\mathrm{d} x \,\mathrm{d} B(t) - \frac{\gamma^2}{2}  \int_{\R^n} \bu(t) \cdot \bvarphi \,\mathrm{d} x \,\mathrm{d} t\bigg]\\ 
& = e^{-\gamma B(t)} \bigg[\int_{\R^n } \vb(t)\otimes \vb(t) : \nabla \bvarphi \,\mathrm{d} x\,\mathrm{d} t + \int_{\R^n } p(t)\, \mathrm{div \, } \bvarphi\, \,\mathrm{d} x \,\mathrm{d} t \bigg].
\end{align*}
It finally implies 
\begin{align*}
\mathrm{d} \int_{\R^n } \bu(t) \cdot \bvarphi \,\mathrm{d} x 
&\,  =  \int_{\R^n } \bu(t)\otimes \bu(t) : \nabla \bvarphi \,\mathrm{d} x \,\mathrm{d} t + \int_{\R^n } \pi(t)\, \mathrm{div \, } \bvarphi \,\mathrm{d} x \,\mathrm{d} t  \\
&\quad  + \frac{\gamma^2}{2}  \int_{\R^n} \bu(t) \cdot \bvarphi \,\mathrm{d} x \,\mathrm{d} t - \gamma  \int_{\R^n} \bu(t) \cdot \bvarphi \,\mathrm{d} x \,\mathrm{d} B(t)\ \mbox{ $\Pr$-a.s.}
\end{align*}
Thus, integrating both sides with respect to time yields \eqref{eqn-weak-form}. Observe that, due to the $\F_t$-measurability of the process $\theta(t)$, the process $\bu$ is $\{\F_{t}\}_{t\geq0}$-adapted for any fixed $\bvarphi \in C^1(\R^n ;\R^n)$, where the filtration $\{\F_{t}\}_{t\ge0}$ corresponds to the noise $B(t)$.
Similarly, the weak formulation for the tracer $\rb$ can be recovered by using the reverse of the  transformations discussed above. 

Hence, it follows that there exists infinitely many weak solutions (in the sense of Definition \ref{Weak-soln}) of the SPDE \eqref{DDEELMSN1}--\eqref{DDEELMSN4}, which conclude the proofs of Theorem \ref{Main-result-Stochastic-PDE}.

\appendix
\renewcommand{\thesection}{\Alph{section}}
\numberwithin{equation}{section}
\section{Application and alternative method}

\subsection{An application to stochastic ideal MHD equations}\label{ASMHD}
One of the main motivations for considering the incompressible stochastic Euler equations with a passive tracer driven by Stratonovich-type noise is their application to the 3D stochastic ideal MHD equations perturbed by both transport and linear multiplicative Stratonovich noise (see \cite[Section 4]{CAB+CMFL+JHLN-15} for the deterministic case in higher dimensions; see also \cite{MC+ZZ-25}).

Consider the 3D stochastic ideal MHD equations
\begin{equation}\label{SMHDE}
\left\{\begin{aligned}
& \bu_t(t) + (\bu(t)\cdot\nabla)\bu(t) - (\curl\, \bb(t))\times \bb(t) + \nabla \pi(t) = - \Nn(\bu(t)) \circ \,\mathrm{d} B(t),\\
&  \bb_t(t) - \curl(\bu(t)\times \bb(t)) = \0,\\
& \mathrm{div \, } \bu(t) = 0,\\
& \mathrm{div \, } \bb(t) = 0,\\
& \bu(0) = \0,\ \rb(0) = 0,
\end{aligned}\right.
\end{equation}
where $\bu(t) = \bu(t,x) \in \R^{3}$ is the velocity field, $\bb(t) = \bb(t,x)\in \R^3$ is the magnetic induction, $\pi(t)= \pi(t,x) \in \R$ is the pressure, with $(t,x)\in[0,\infty)\times \R^3$. Furthermore, $\Nn(\cdot)$ is defined as in \eqref{eqn-noise}, which covers both transport and linear multiplicative noise. The symbol $\circ$ indicates that the stochastic integral is understood in the Stratonovich sense, and $B(\cdot)$ is a Brownian motion as defined in \eqref{eqn-def-B}.

Observe that the following restrictions on velocity and magnetic induction
\begin{align*}
\bu(t,x)  = (u_1(t,x), u_2(t,x), 0),\
\bb(t,x) = (0, 0, \rb(t,x)),\
(t,x)  = (t,x_1,x_2) \in [0,\infty)\times \R^2,
\end{align*}
leads to the following known system:
\begin{equation}\label{TMHDE}
\left\{\begin{aligned}
& \bu_t(t) + \mathrm{div \, }(\bu(t)\otimes \bu(t) ) + \nabla p(t) = - \Nn(\bu(t)) \circ \,\mathrm{d} B(t),\\
& \rb_t(t) + \mathrm{div \, }(\rb(t) \bu(t)) = 0,\\
& \mathrm{div \, } \bu(t) = 0\\
& \bu(0) = \0,\ \rb(0) = 0.
\end{aligned}\right.
\end{equation}
with
\begin{equation*}
p(t) := \pi(t) + \frac{\abs{\rb(t)}^2}{2}.
\end{equation*}
Thus, the problem \eqref{TMHDE} reduces to the stochastic Euler equations with a passive tracer, i.e., \eqref{DDEELMSN1}–\eqref{DDEELMSN4}. Hence, by applying Theorem \ref{Main-result-Random-PDE} with $n=2$, we obtain non-uniqueness of weak solutions to the stochastic ideal MHD equations \eqref{SMHDE}.

\dela{
Note that, by the transformations discussed in Subsection \ref{Sec-Transforming to RPDE}, we can transform the above mentioned system into the following random system:
\begin{equation}\label{RMHDE}
	\left\{\begin{aligned}
		& \vb_t(\theta) + (\vb(\theta)\cdot\nabla)\vb(\theta) - (\curl\, \bb(\theta))\times \bb(\theta) + \nabla \wtilde{p}(\theta) = - \Nn(\bu(t)) \circ \,\mathrm{d} B(t),\\
		&  \bb_\theta(\theta) - \curl(\vb(\theta)\times \bb(\theta)) = \0,\\
		& \mathrm{div \, } \vb(\theta) = 0,\\
		& \mathrm{div \, } \bb(\theta) = 0,\\
		& \vb(0) = \0,\ \rb(0) = 0,
	\end{aligned}\right.
\end{equation}
where $\theta$ is defined in \eqref{Time-transformation}.
}

Next, for the sake of completeness, we provide an alternative proof of the main result for the random PDEs \eqref{RDDEE-T} and \eqref{RDDEE}, as follows.

\subsection{Convex integration proof}\label{CIT}
For the reader’s convenience, we include a proof of Theorem~\ref{Main-result-Random-PDE} via convex integration for $n\geq 2$. This argument is a straightforward generalization of \cite[Theorem~1.1]{CAB+CMFL+JHLN-15}, which treats the case $n=2$ with constant energy equal to $1$, here we extend it to any continuous energy profile bounded away from zero. For simplicity, we adopt the notation introduced in Subsection~\ref{Sec-Baire category}.

\begin{proof}[Proof of Theorem \ref{Main-result-Random-PDE}]
The theme of the proof is to construct a sequence $\{\omegaa_k = (\rb_k, \Eta_k, \vb_k,\\ \bz_k, q_k)\}_{k\in\N}\subset \X_0$ satisfying the following two conditions:
\begin{itemize}
\item[(i)] there exists $\omegaa = (\rb, \Eta, \vb, \bz, q)\in\X$ such that $\omegaa_k  \xrightarrow[k\to \infty]{} \omegaa$  in $\Lb^2([0,\infty)\times \R^n)$;
\item[(ii)] for all $k\in\N$,
\end{itemize}
\begin{align*}
\left\| \vb_{k+1}\right\|_{\mL^2(\bO)}^2 + \left\| \rb_{k+1}\right\|_{\mL^2(\bO)}^2 & \geq \left\|\vb_k\right\|_{\mL^2(\bO)}^2 + \left\|\rb_k\right\|_{\mL^2(\bO)}^2 \\
& \quad + \beta \big( (\|h\|_{L^2(0,T)}^2|\bO_x| + |\bO|) - (\left\|\vb_k\right\|_{\mL^2(\bO)}^2 + \left\|\rb_k\right\|_{\mL^2(\bO)}^2) \big)^2.
\end{align*}
Then, by using (i), we can pass to the limit in (ii) to obtain
\begin{align*}
&\left\| \vb\right\|_{\mL^2(\bO)}^2 + \left\| \rb\right\|_{\mL^2(\bO)}^2\\
& \geq \left\|\vb\right\|_{\mL^2(\bO)}^2 + \left\|\rb\right\|_{\mL^2(\bO)}^2 + \beta \big( (\|h\|_{L^2(0,T)}^2|\bO_x| +|\bO|) - (\left\|\vb\right\|_{\mL^2(\bO)}^2 + \left\|\rb\right\|_{\mL^2(\bO)}^2) \big)^2,
\end{align*}
and hence $\left\| \vb\right\|_{\mL^2(\bO)}^2 + \left\| \rb\right\|_{\mL^2(\bO)}^2  = (\|h\|_{L^2(0,T)}^2|\bO_x| +|\bO|)$. Since $\abs{\vb} \leq h(t)$, $\abs{\rb} \leq 1$ in $\bO$, and supported in $\bO$, we conclude that $\abs{\vb}= h(t)\mathds{1}_{\bO}$ and $\abs{\rb} =\mathds{1}_{\bO}$. Clearly, $(\rb, \Eta, \vb, \bz)\in K_t^{\mathrm{co}}$ for a.e. $(t,x) \in \bO$, since $(\rb, \Eta, \vb, \bz, q)\in\X$. This implies that $(\rb, \Eta, \vb, \bz)(t,x)\in K_t$ for a.e. $(t,x)\in\bO$, as we needed.

It remains to construct a sequence $\{\omegaa_k\}_{k\in\N} \subset\X_0$ satisfying (i) and (ii). In order to construct, we set $(\rb_1, \Eta_1, \vb_1, \bz_1, q_1) = \0$ in $[0,\infty)\times \R^n$. This is possible due to Lemma \ref{Lem-U}.
For fixed $\eps>0$, let $\rho_\eps$ be a standard mollifying kernel in $[0,\infty)\times \R^n)$. The sequence $(\rb_k, \Eta_k, \vb_k,\bz_k, q_k) \in \X_0$ is constructed inductively, as well as an auxiliary sequence of numbers
\begin{align}\label{CI_Proof_eqn-1}
\delta_k < 2^{-k},
\end{align}
such that
\begin{align}\label{CI_Proof_eqn-2}
\left\|\omegaa_k - \omegaa_k \ast \rho_{\delta_k} \right\|_{\mL^2(\bO)} < 2^{-k}.
\end{align}
Then, we apply Lemma \ref{Convergence-lemma} to obtain $\omegaa_{k+1} = (\rb_{k+1}, \Eta_{k+1}, \vb_{k+1},\bz_{k+1}, q_{k+1}) \in \X_0$ such that
\begin{align}
&\Vert \vb_{k+1} \Vert_{\mL^2(\bO)}^2 + \Vert \rb_{k+1} \Vert_{\mL^2(\bO)}^2\\ 
& \geq \Vert \vb_k \Vert_{\mL^2(\bO)}^2 + \Vert \rb_k \Vert_{\mL^2(\bO)}^2 + \beta \big( (\|h\|_{L^2(0,T)}^2|\bO_x| +|\bO|) - (\Vert \vb_k \Vert_{\mL^2(\bO)}^2 + \Vert \rb_k \Vert_{\mL^2(\bO)}^2)\big)^2,\label{CI_Proof_eqn-3}
\end{align}
and
\begin{align}\label{CI_Proof_eqn-4}
\left\|(\omegaa_{k+1} - \omegaa_k) \ast \rho_{\delta_j} \right\|_{\mL^2(\bO)} < 2^{-k}\ \text{ for all }\ j \leq k.
\end{align}
Since the sequence $\{\omegaa_k\}_{k\in\N}$ is bounded in $\Lb^\infty([0,\infty)\times \R^n)$, there exists a subsequence, which we still denote by $\omegaa_k$, and a vector field $\omegaa = (\rb, \Eta, \vb, \bz, q) \in \X$ such that $\omegaa_k \xrightharpoonup[k\to\infty]{} \omegaa$\ $\ w^\ast$ in $\Lb^\infty([0,\infty)\times \R^n)$. Moreover, the sequence $\{\omegaa_k\}_{k\in\N}$ and the corresponding sequence $\{\delta_k\}_{k\in\N}$ satisfy the properties \eqref{CI_Proof_eqn-1}, \eqref{CI_Proof_eqn-2},  \eqref{CI_Proof_eqn-3} and \eqref{CI_Proof_eqn-4}. Then, for every $k\in\N$
\begin{equation*}
\left\|\omegaa_k\ast\rho_{\delta_k} - \omegaa\ast \rho_{\delta_k} \right\|_{\mL^2(\bO)} \leq \sum_{j = 0}^{\infty} \left\|\omegaa_{k+j}\ast\rho_{\delta_k} - \omegaa_{k+j+1}\ast \rho_{\delta_k} \right\|_{\mL^2(\bO)}
\leq \sum_{j = 0}^{\infty} 2^{-k+j} \leq 2^{-k+1}.
\end{equation*}
By using the triangle inequality, we assert
\begin{align*}
\left\|\omegaa_k - \omegaa \right\|_{\mL^2(\bO)} \leq \left\|\omegaa_k - \omegaa_k\ast \rho_{\delta_k} \right\|_{\mL^2(\bO)} + \left\|\omegaa_k\ast\rho_{\delta_k} - \omegaa\ast \rho_{\delta_k} \right\|_{\mL^2(\bO)} + \left\|\omegaa\ast\rho_{\delta_k} - \omegaa \right\|_{\mL^2(\bO)},
\end{align*}
and letting $k\to \infty$, we deduce that $\omegaa_k  \xrightarrow[k\to \infty]{} \omegaa$  in $\Lb^2(\bO)$. This concludes the proof of Theorem \ref{Main-result-Random-PDE}
\end{proof}


Lastly, to complete this manuscript, we provide the proof of an essential result that ensures the non-emptiness of the open set $\U_t$, as follows.

\subsection{Proof of Lemma \ref{Lem-U}}\label{Subsec-U}
Here we present the proof of Lemma \ref{Lem-U}, motivated from \cite[Lemma 4.2]{CDL+LSJ-09}. The main idea of the proof is that, for a.e. fixed $t \in [0,T]$, we define an operator $\T_t$ (see \eqref{Def-T}) and, by exploiting the symmetry of the sphere of radius $h(t)$ together with the orthogonality properties of the surface (Haar) measure $\nu_t$ (see below), establish that $\T_t$ is surjective.

\begin{remark}\label{Symmetricity}
Throughout this section, let $\sphere_t^{n-1}$ denote the sphere of radius $h(t)$ in $\R^n$ centered at the origin. The symmetricity of $\sphere_t^{n-1}$ means that, for any $v\in \sphere_t^{n-1}$, there exists $O\in O(n)$ (the space of $n\times n$ orthogonal matrices) such that $Ov = O (v_1, \cdots, v_i, \cdots, v_n) = O (v_1, \cdots, -v_i, \cdots, v_n)$. Since the Haar measure $\nu_t$ is rotational invariant, it follows that
\begin{equation*}
\int_{\sphere_t^{n-1}}v_i\, d\nu_t =\int_{\sphere_t^{n-1}} -v_i\, d\nu_t = 0,\ \text{ for }\ 1 \le i \le n.
\end{equation*} 
\end{remark}

\begin{lemma}[{\cite[Lemma 2]{CDL+LSJ-10}}]\label{Lem-e-con}
Let us consider a mapping
\begin{equation}
\R^n\times M_0^n  \ni (\vb, \bu) \mapsto e(\vb,\bu) := \frac{n}{2}\lambda_{\max} \left(\vb\otimes \vb - \bu\right) \in \R,
\end{equation}
where $\lambda_{\max}$ denotes the largest eigenvalue. Then, the following hold:
\begin{itemize}
\item[(i)] $e: \R^n\times M_0^n \to \R$ is convex,
\item[(ii)] $\frac{|\vb|^2}{2} \le e(\vb, \bz)$ and equality if and only if $\bz = \vb \otimes \vb - \frac{|\vb|^2}{2} I_n$,
\item[(iii)] $\|\bz\|_{\mathrm{op}} \le \frac{2(n-1)}{n} e(\vb, \bz)$, where $\|\cdot\|_{\mathrm{op}}$ denotes the operator norm of the matrix,
\item[(iv)] If $t \geq 0$, then the $\frac{h^2(t)}{2}$-sublevel set of $e$ is the convex hull of $\wtilde{K}_t$, i.e.,
\begin{equation}\label{Def-K_t-e}
\bigg\{ (\wtilde{\vb}, \wtilde{\bz}) \in 
\R^n\times\matr_0^n : e(\wtilde{\vb},\wtilde{\bz}) \le \frac{h^2(t)}{2}
\bigg\}=\wtilde{K}_t^{\mathrm{co}},
\end{equation}
where $\wtilde{K}_t$ is a restriction set consisting of pairs$(\wtilde{\vb},\wtilde{\bz})$ from the set $K_t$, see \eqref{K}.
\end{itemize} 
\end{lemma}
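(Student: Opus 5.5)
The plan is to prove the four items in order, using only the variational characterisation
\[
\lambda_{\max}(A)=\sup_{|\xi|=1}A\xi\cdot\xi \qquad\text{for symmetric } A,
\]
together with elementary trace identities. The last item is a convex-geometry argument.

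For (i), I would note that for each fixed unit vector $\xi$ the map $(\vb,\bz)\mapsto (\vb\cdot\xi)^2-\bz\xi\cdot\xi$ is convex, being a convex quadratic plus a linear term. The function $e$ is $\frac n2$ times the supremum of this family, hence convex. For (ii), I would use $\Tr(\vb\otimes\vb-\bz)=|\vb|^2$, because $\bz$ is trace free. The largest eigenvalue is at least the mean of the eigenvalues, so $e\ge\frac n2\cdot\frac{|\vb|^2}{n}=\frac{|\vb|^2}{2}$. Equality holds iff all eigenvalues coincide, i.e. iff $\vb\otimes\vb-\bz=\frac{|\vb|^2}{n}I_n$. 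I would point out that the normalisation in (ii) should read $\bz=\vb\otimes\vb-\frac{|\vb|^2}{n}I_n$, which is consistent with \eqref{K}. For (iii), I would start from $\vb\otimes\vb-\bz\le \frac{2e}{n}I_n$, which gives $\bz\ge \vb\otimes\vb-\frac{2e}{n}I_n\ge-\frac{2e}{n}I_n$. So every eigenvalue of $\bz$ is at least $-\frac{2e}{n}$. Since $\Tr\bz=0$, the largest eigenvalue is at most $(n-1)\frac{2e}{n}$, and therefore $\|\bz\|_{\mathrm{op}}\le\frac{2(n-1)}{n}e$.

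For (iv), write $S_t$ for the $\frac{h^2(t)}2$-sublevel set and $\wtilde K_t=\{(\vb,\vb\otimes\vb-\frac{|\vb|^2}{n}I_n):|\vb|=h(t)\}$. By (ii), $e\equiv h^2(t)/2$ on $\wtilde K_t$. By (i), $S_t$ is convex, and it is closed by continuity of $\lambda_{\max}$. Hence $\wtilde K_t^{\mathrm{co}}\subset S_t$. Moreover $S_t$ is bounded, since $|\vb|\le h(t)$ by (ii) and $\|\bz\|_{\mathrm{op}}$ is bounded by (iii). So $S_t$ is compact, and by Krein--Milman (Minkowski in finite dimensions) it is the convex hull of its extreme points. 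It therefore suffices to show that every extreme point lies in $\wtilde K_t$.

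Let $(\vb,\bz)\in S_t$ be extreme. If $|\vb|=h(t)$, then $\frac{|\vb|^2}{2}=\frac{h^2}{2}\ge e\ge\frac{|\vb|^2}{2}$. Equality then holds in (ii), so the point lies in $\wtilde K_t$. It remains to rule out $|\vb|<h(t)$. In that case I would exhibit a segment through $(\vb,\bz)$ inside $S_t$ of the form
\[
s\mapsto\big(\vb+s\ab,\ \bz+s(\vb\otimes\ab+\ab\otimes\vb)+s^2\big(\ab\otimes\ab-\tfrac{|\ab|^2}{n}I_n\big)\big)
\]
with $\ab\perp\vb$. This path must then be replaced by a genuine affine segment. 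Along the path, $A:=\vb\otimes\vb-\bz$ changes only by $\frac{s^2|\ab|^2}{n}I_n$ plus a trace-free correction.

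\textbf{Main obstacle.} Making this line argument genuinely affine while keeping $\lambda_{\max}\le h^2/n$ is the delicate step. Because $\Tr A=|\vb|^2<h^2$, some eigenvalue of $A$ is strictly below $h^2/n$. The idea is to move $\bz$ in a direction $\pm\,\ab\otimes\ab$ (corrected to be trace free), with $\ab$ chosen in the eigenspaces of $A$ below the maximum, and simultaneously $\vb$ in a compatible direction. The difficulty is the case where that eigenspace is one-dimensional and not orthogonal to $\vb$. If this bookkeeping becomes unwieldy, I would fall back on the averaging argument suggested in Subsection~\ref{Subsec-U}. There one represents points of $S_t$ as barycentres $\int_{\sphere_t^{n-1}}(w,w\otimes w-\frac{h^2}{n}I_n)\,d\mu(w)$ of probability measures on the sphere. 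Starting from the Haar measure $\nu_t$, which by Remark~\ref{Symmetricity} gives the origin, one deforms by densities. Hahn--Banach separation then shows that any point of $S_t$ outside this set of barycentres would violate $\lambda_{\max}\le h^2/n$, exactly as in \cite[Lemma 3]{CDL+LSJ-10}.
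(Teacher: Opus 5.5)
Your arguments for (i)--(iii) are correct. You are also right that the equality case in (ii) must read $\bz=\vb\otimes\vb-\frac{|\vb|^2}{n}I_n$: the $\frac{|\vb|^2}{2}$ in the statement is a misprint, consistent with \eqref{K} only for $n=2$. In (iv), the following steps are fine: the inclusion $\wtilde K_t^{\mathrm{co}}\subset S_t$, compactness of $S_t$, the reduction to extreme points via Minkowski's theorem, and the case $|\vb|=h(t)$. The paper quotes the lemma from \cite{CDL+LSJ-10} without proof, so the only question is whether your argument closes. It does not.

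The step that carries all the content of (iv) is left open: showing that a point $(\vb,\bz)\in S_t$ with $|\vb|<h(t)$ is not extreme. Your curve is not merely non-affine. With $A:=\vb\otimes\vb-\bz$ it gives exactly $A+\frac{s^2|\ab|^2}{n}I_n$, which raises every eigenvalue. It therefore leaves $S_t$ as soon as $\lambda_{\max}(A)=h^2(t)/n$. That is the only case requiring an argument, since $\{e<h^2(t)/2\}$ is open. The fallback does not help either. Barycentres of probability measures on $\wtilde K_t$ form exactly $\wtilde K_t^{\mathrm{co}}$, so ``every point of $S_t$ is a barycentre'' is the claim itself. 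The inequality $\sup_{S_t}\ell\le\max_{\wtilde K_t}\ell$ for all linear $\ell$, which the separation argument needs, is asserted rather than checked. The averaging argument of Subsection~\ref{Subsec-U} only shows that $0$ is an interior point.

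The missing idea is that you need no orthogonality $\ab\perp\vb$ if you remove the trace of the cross term along $\ab\otimes\ab$ instead of along $I_n$. Since $\Tr A=|\vb|^2<h^2(t)$, $A$ has a unit eigenvector $\ab$ with eigenvalue $\mu<h^2(t)/n$. Set
\begin{equation*}
(\bar\vb,\bar\bz):=\big(\ab,\ \vb\otimes\ab+\ab\otimes\vb-2(\vb\cdot\ab)\,\ab\otimes\ab\big)\in\R^n\times\matr_0^n .
\end{equation*}
Then $\Tr\bar\bz=0$ and
\begin{equation*}
(\vb+s\bar\vb)\otimes(\vb+s\bar\vb)-(\bz+s\bar\bz)=A+\big(s^2+2s\,\vb\cdot\ab\big)\,\ab\otimes\ab .
\end{equation*}
So the eigenbasis of $A$ is unchanged, and $\mu$ becomes $\mu+s^2+2s\,\vb\cdot\ab$, which is $<h^2(t)/n$ for $|s|$ small. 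The other eigenvalues stay $\le h^2(t)/n$. Thus $(\vb,\bz)\pm s(\bar\vb,\bar\bz)\in S_t$ for small $s>0$, with $\bar\vb\neq 0$. Hence $(\vb,\bz)$ is not extreme, and your reduction then gives (iv).
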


\begin{proof}[Proof of Lemma \ref{Lem-U}]
Let us fix a.e.~$t\ge0$ and recall from \eqref{eqn-U} that $\U_t$ is the set of interior points of the set ${\K}_t^{co}$, see \eqref{K} for definition of ${\K}_t$. Since $\wtilde{\rb} \in [-1,1]$, we have $0\in(-1,1)$. Therefore, it is sufficient to prove that the element $\coma{0} = (\0,\adda{0})\in\R^n\times M_0^n$ belongs to $\wtilde{\U}_t := \mathrm{int}\wtilde{K}_t^{\mathrm{co}}$,
which in turn implies that $\wtilde{\U}_t$ is non-empty.

Let $\nu_t$ be a surface 
measure on $\sphere_t^{n-1}$, see Remark \ref{Symmetricity}, such that $\nu_t(\sphere_t^{n-1})=1$ and consider the linear map
\begin{equation}\label{Def-T}
\Cb(\sphere_t^{n-1})\ni \phi \mapsto \T_t(\phi):=\int_{\sphere_t^{n-1}} \left(\vb,\vb\otimes \vb - \frac{h^2(t)}{n}I_n \right)\phi(\vb)\, d\nu_t \in  \R^n\times M_0^n.
\end{equation}
\textbf{Step 1.} The map  $\T_t(\cdot) := (\T_t^1(\cdot), \T_t^2(\cdot))$, is well-defined and linear, where
\begin{equation*}
\Cb(\sphere_t^{n-1}) \ni \phi \mapsto \T_t^1(\phi)  = \int_{\sphere_t^{n-1}}\vb\phi(\vb)\, d\nu_t \in \R^n,
\end{equation*}
and \begin{align*}
\Cb(\sphere_t^{n-1})\ni \phi \mapsto \T_t^2(\phi)  := \int_{\sphere_t^{n-1}}\left(\vb\otimes \vb - \frac{h^2(t)}{n} I_n\right)\phi(\vb)\,d\nu_t \in M_0^n.
\end{align*}
To do so, it is enough to show $\T_t^2(\phi)\in M_0^n$. Let us fix $\vb \in \sphere_t^{n-1}$. Then, it follows that
\begin{align*}
\Tr(\T_t^2(\phi)) & = \Tr\left( \int_{\sphere_t^{n-1}} \left(\vb\otimes \vb - \frac{h^2(t)}{n} I_n \right)\phi(\vb)\, d\nu_t\right)  =  \sum_{i=1}^{n} \int_{\sphere_t^{n-1}}\left(v_i^2 -\frac{h^2(t)}{n}\right)\phi(\vb)\, d\nu_t\\
& =  \int_{\sphere_t^{n-1}}(|\vb|^2 - h^2(t))\phi(\vb)\, d\nu_t = 0.
\end{align*}
Hence $\T_t(\phi)\in\R^n\times M_0^n$ and the linearity is trivial.
\vskip 1mm
\noindent
\textbf{Step 2.} Our next aim is to show that $\T_t(\phi)\in\wtilde{\K}_t^{co}$ for
\begin{align}\label{EEqP10}
0 \le \phi \in \Cb(\sphere_t^{n-1})\ \text{ and }\ \int_{\sphere_t^{n-1}} \phi\, d\nu_t = 1.
\end{align}
Suppose if $\phi\in\Cb(\sphere_t^{n-1})$ is having the density properties \eqref{EEqP10}.
Since the mapping $e$ is a convex function, see Lemma \ref{Lem-e-con}, it follows from Jensen's inequality that 
\begin{align*}
e\left( \int_{\sphere_t^{n-1}}\left(\vb, \vb\otimes \vb - \frac{h^2(t)}{n}I_n\right)\phi(\vb)\,d\nu_t \right) & \leq \int_{\sphere_t^{n-1}} e\left(\vb, \vb\otimes \vb - \frac{h^2(t)}{n}I_n\right) \phi(\vb)\, d\nu_t\\
& \leq \int_{\sphere_t^{n-1}} \frac{h^2(t)}{2} \phi(\vb)\, d\nu_t\\
& = \frac{h^2(t)}{2}.
\end{align*}
Thus, by \eqref{Def-K_t-e}, we obtain $\T_t(\phi)\in\wtilde{\K}_t^{co}.$ 
\vskip 1mm
\noindent
\normalsize 
\textbf{Step 3.} Now, for $\phi\equiv1$, we show that  $\displaystyle \T_t(1) = \coma{0}\in\wtilde{\K}_t^{co}.$
First note that the constant function $\phi\equiv 1$ satisfies \eqref{EEqP10}, which implies $\T_t(1)\in\wtilde{\K}_t^{co}$. Further, from the definition of $\T_t$, it follows that
\begin{align*}
\T_t(1) = \left(\int_{\sphere_t^{n-1}}\vb\, d\nu_t, \int_{\sphere_t^{n-1}}\left(\vb\otimes \vb - \frac{h^2(t)}{n}I_n\right)\,d\nu_t\right).
\end{align*}
Since the surface measure is rotational invariant, 
if we denote 
\[\ab_t =\int_{\sphere_t^{n-1}}\vb\, d\nu_t\in\R^n,\]
then for any $O\in O(n)$, it implies
\[O\ab_t= O\left(\int_{\sphere_t^{n-1}}\vb\, d\nu_t(\vb)\right) = \int_{\sphere_t^{n-1}}O\vb\, d\nu_t(\vb) = \int_{\sphere_t^{n-1}}\wb\, d\nu_t(\wb) = \ab_t,\]
where $O(n)$ denotes the space of $n\times n$ orthogonal matrices and we used the  transformation $O\vb=w$ that also implies $\nu_t(\vb) = \nu_t(O^{\top}\wb)= \nu_t(\wb)$. If $(\0\neq)\ab_t\in\R^n$, then there exists an $\wtilde{O}$ (for e.g. take $\wtilde{O}=-I$) so that $\wtilde{O}\ab_t=-\ab_t $. Therefore, it yields 
\[\ab_t=\int_{\sphere_t^{n-1}}\vb\, d\nu_t = \0.\]
Similarly, suppose $\displaystyle A=\int_{\sphere_t^{n-1}}\left(\vb\otimes \vb - \frac{h^2(t)}{n}I_n\right)\,d\nu_t\in M_0^n$ is such that 
\begin{align*}
OAO^{\top} & = \int_{\sphere_t^{n-1}}\left(O(\vb\otimes \vb)O^{\top} - \frac{h^2(t)}{n}I_n\right)\,d\nu_t(\vb) = \int_{\sphere_t^{n-1}}\left(O\vb^{\top}\vb O^{\top} - \frac{h^2(t)}{n}I_n\right)\,d\nu_t(\vb)\\
& = \int_{\sphere_t^{n-1}}\left(\left(\vb O^{\top} \right)^{\top} \vb O^{\top} - \frac{h^2(t)}{n}I_n\right)\,d\nu_t(\vb)\\ 
& = \int_{\sphere_t^{n-1}}\left(\left(\vb O^{\top}\right)\otimes \left(\vb O^{\top}\right) - \frac{h^2(t)}{n}I_n\right)\,d\nu_t(\vb)\\
& = \int_{\sphere_t^{n-1}}\left(\wb\otimes \wb - \frac{h^2(t)}{n}I_n\right)\,d\nu_t(\wb)\\
& = A,
\end{align*}
where we used the  transformation $\vb O^{\top} = \wb$ which implies $\nu_t(\vb) = \nu_t(\wb O)= \nu_t(\wb)$. Then, there exists a diagonal matrix $D$ such that 
\[ OAO^{\top} = D = A.\]
Since the above identity holds for any $O\in O(n)$, we may in particular take $O=O_{per}$, where $O_{per}$ is the permutations of the identity matrix $I_n$. For example, in case of $n=2$, if we consider 
\[O_{per} = \begin{bmatrix}
0 & 1\\ 1 & 0
\end{bmatrix}.\]
Then, we calculate to get
\[O_{per} A O_{per}^{\top} = O_{per} D O_{per}^{\top} = \begin{bmatrix}
0 & 1\\ 1 & 0
\end{bmatrix} \begin{bmatrix}
\lambda_1 & 0\\ 0 & \lambda_2
\end{bmatrix} \begin{bmatrix}
0 & 1\\ 1 & 0
\end{bmatrix} = \begin{bmatrix}
\lambda_2 & 0\\ 0 & \lambda_1
\end{bmatrix} . \] 
On the other hand 
\[
A=  \begin{bmatrix}
\lambda_1 & 0\\ 0 & \lambda_2
\end{bmatrix}.
\]
Hence it  follows that $\lambda_1 = \lambda_2$. Therefore $A = \lambda I_n$, for some $\lambda\in\R$. On the other hand, for $\vb\in\sphere_t^{n-1}$, $\Tr(\vb\otimes \vb) = h^2(t)$ that implies \[\Tr A=0 \implies \underbrace{\lambda + \cdots + \lambda}_{n-times} = n\lambda = 0,\]
which gives $A= \adda{0}$, i.e., $\T_t(1) = \coma{0}.$
Hence $\coma{0}\in\wtilde{\K}_t^{co}$.
\vskip 1mm
\noindent
\textbf{Step 4.} Observe that whenever $\psi\in\Cb(\sphere_t^{n-1})$ such that
\begin{align}\label{EEqP11}
0\leq\Vert \psi\Vert_{\Cb(\sphere_t^{n-1})} \leq 1 - \int_{\sphere_t^{n-1}} \psi\, d\nu_t = \wtilde{\alpha}, \ \text{ for some } \ \wtilde{\alpha}\in\R.
\end{align}
Then, every $\phi \in \Cb(\sphere_t^{n-1})$ of the form $\phi = \wtilde{\alpha} + \psi$ will satisfies \eqref{EEqP10}, i.e.,
\begin{align*}
\phi = \wtilde{\alpha} + \psi 
\geq \Vert \psi\Vert_{\Cb(\sphere_t^{n-1})} + \psi \geq \Vert \psi\Vert_{\Cb(\sphere_t^{n-1})} - \Vert \psi\Vert_{\Cb(\sphere_t^{n-1})} = 0,
\end{align*}
where we used the fact $\abs{\psi} \leq \Vert \psi\Vert_{\Cb(\sphere_t^{n-1})}$ implies $-\Vert \psi\Vert_{\Cb(\sphere_t^{n-1})} \leq \psi \leq \Vert \psi\Vert_{\Cb(\sphere_t^{n-1})}$ and 
\begin{align*}
\int_{\sphere_t^{n-1}} \phi\, d\nu_t = \int_{\sphere_t^{n-1}}(\wtilde{\alpha} + \psi )\, d\nu_t = \wtilde{\alpha} \int_{\sphere_t^{n-1}}1\, d\nu_t  + \int_{\sphere_t^{n-1}}\psi \, d\nu_t = 1.
\end{align*}
Thus, using Step 3, it follows that
\begin{align*}
\T_t(\phi) & = \int_{\sphere_t^{n-1}}\left(\vb\otimes \vb - \frac{h^2(t)}{n}I_n\right)\phi\,d\nu_t  = \int_{\sphere_t^{n-1}}\left(\vb\otimes \vb - \frac{h^2(t)}{n}I_n\right)(\psi + \wtilde{\alpha})\,d\nu_t\\
& = \int_{\sphere_t^{n-1}}\left(\vb\otimes \vb - \frac{h^2(t)}{n}I_n\right)\psi\,d\nu_t + \wtilde{\alpha}  \int_{\sphere_t^{n-1}}\left(\vb\otimes \vb - \frac{h^2(t)}{n}I_n\right)\,d\nu_t \\	
& = \int_{\sphere_t^{n-1}}\left(\vb\otimes \vb - \frac{h^2(t)}{n}I_n\right)\psi\,d\nu_t\\
& = \T_t(\psi),
\end{align*}
which implies
\begin{align}\label{EEqP12}
\T_t(\psi) = \T_t(\phi)\in\wtilde{\K}_t^{co}.
\end{align}
In particular, for $\displaystyle\Vert \psi\Vert_{\Cb(\sphere_t^{n-1})} <\frac{1}{2}$, the inequality \eqref{EEqP11} holds, due to the following fact:
\begin{align*}
\wtilde{\alpha} = 1 - \int_{\sphere_t^{n-1}} \psi\, d\nu_t \geq 1- \Vert \psi\Vert_{\Cb(\sphere_t^{n-1})} > 1 -\frac{1}{2} = \frac{1}{2}> \Vert \psi\Vert_{\Cb(\sphere_t^{n-1})} \geq 0.
\end{align*}
Therefore, from equation \eqref{EEqP12}, we deduce
\begin{align}\label{EEqP13}
\T_t(\psi) \in\wtilde{\K}_t^{co}, \text{ for every } \psi\in \sphere_{1/2} \subset \Cb(\sphere_t^{n-1}),
\end{align}
where $\displaystyle \sphere_{1/2}:= \left\{f\in\Cb(\sphere_t^{n-1}) : \Vert f\Vert_{\Cb(\sphere_t^{n-1})} = \frac{1}{2} \right\}$.
\vskip 2mm
\noindent
\textbf{Step 5.} Let us now prove that the map $\T_t$ is a bounded.
Consider $\phi \in \Cb(\sphere_t^{n-1})$, then
\begin{align*}
\abs{\T_t(\phi)} & = \abs{ \int_{\sphere_t^{n-1}}\left(\vb, \vb\otimes \vb - \frac{h^2(t)}{n}I_n\right)\phi\,d\nu_t}  \leq  \int_{\sphere_t^{n-1}}\abs{\left(\vb, \vb\otimes \vb - \frac{h^2(t)}{n}I_n\right)}\abs{\phi}\,d\nu_t\\
& \leq  \Vert\phi\Vert_{\Cb(\sphere_t^{n-1})}\int_{\sphere_t^{n-1}}\abs{\left(\vb, \vb\otimes \vb - \frac{h^2(t)}{n}I_n\right)}\,d\nu_t,
\end{align*}
which implies
\begin{align*}
\|T\|_{op} = \sup_{0\neq \phi\in\Cb(\sphere_t^{n-1})} \frac{\abs{\T_t(\phi)}}{\Vert\phi\Vert_{\Cb(\sphere_t^{n-1})}} & \leq \int_{\sphere_t^{n-1}} \abs{\left(\vb, \vb\otimes \vb - \frac{h^2(t)}{n}I_n\right)} \,d\nu_t < \infty.
\end{align*}
Thus, for a.e. fixed $t\in [0,T]$, $\T_t$ is a bounded map. 

It suffices to show that $\T_t$ is surjective to prove that $\wtilde{\K}_t^{co}$ contains a neighborhood of $\0$.
Since $\T_t$ is linear bounded operator, we have $\T_t(0)=\coma{0}$. Moreover, if it is surjective also, then by the Open Mapping Theorem, $\T_t$ is an open map. Consequently, $\T_t$ maps a neighborhood of $0\in\Cb(\sphere_t^{n-1})$ onto a neighborhood of $\coma{0}\in\R^n\times M_0^n$. Precisely, if $\sphere_{1/2}\subset\Cb(\sphere_t^{n-1})$, then from \eqref{EEqP13}, it follows that
\begin{equation*}
T\big(\sphere_{1/2}\big) \subseteq \partial B_r(\coma{0}) \subset \wtilde{\K}_t^{co},\ \text{ for some }\ 0< r \leq 1.
\end{equation*} 
Thus, it follows that $\coma{0}\in\wtilde{\U}_t$.

\vskip 1mm
\noindent
\textbf{Step 6.} Finally, it remains to prove that the map $\T_t$ is surjective, which follows from an application of orthogonality in $\mL^2(\sphere_t^{n-1})$. 
\vskip 2mm
\noindent
\textbf{1.} Indeed, if for a fixed  $1\leq i\leq n$,  function $\phi$ is defined by
\[
\phi: \sphere^{n-1}  \ni \vb=(v_1, \cdots, v_n) \mapsto  v_i \in \mathbb{R},
\]
then 
\begin{align*}
\T_t(\phi) = \int_{\sphere_t^{n-1}}\left(\vb, \vb\otimes \vb - \frac{h^2(t)}{n}I_n\right)v_i\,d\nu_t.
\end{align*}
Using the symmetricity of the domain $\sphere_t^{n-1}$, for $1\le i,j,k \le n$, we infer
\begin{align*}
& \int_{\sphere_t^{n-1}}v_i\, d\nu_t  =\int_{\sphere_t^{n-1}} -v_i\, d\nu_t = 0 = \int_{\sphere_t^{n-1}}v_j^2 v_i\, d\nu_t  = \int_{\sphere_t^{n-1}}v_j^2 (-v_i)\, d\nu_t,\\  & \int_{\sphere_t^{n-1}}v_jv_kv_i\, d\nu_t = 0\ \text{ and }  \int_{\sphere_t^{n-1}}v_jv_i\, d\nu_t =  \int_{\sphere_t^{n-1}}v_jv_i\delta_{i,j}\, d\nu_t.
\end{align*}
It follows that
\begin{align*}
\T_t(v_i) =  \beta_t^1(e_i,\adda{0}), \ \text{ where }\ \beta_t^1  = \int_{\sphere_t^{n-1}} v_1^2 \, d\nu_t. 
\end{align*}
Hence it spans the Euclidean space $\R^n$.

\vskip 1mm
\noindent
\textbf{2.} Let us choose and fix $1\le  i, j, k, l \le n$ such that  $i\neq j$. 
Using the symmetry of $ \sphere_t^{n-1}$ similar to case 1, we obtain
\begin{align*} 
\int_{\sphere_t^{n-1}}v_kv_iv_j\, d\nu_t & = \int_{\sphere_t^{n-1}}(-v_k)v_i v_j\, d\nu_t = 0,
\\ 
\int_{\sphere_t^{n-1}}v_k^2v_iv_j\, d\nu_t & = \int_{\sphere_t^{n-1}}v_k^2(-v_i)v_j\, d\nu_t = \int_{\sphere_t^{n-1}}v_k^2v_i(-v_j)\, d\nu_t = 0,
\\
\int_{\sphere_t^{n-1}}v_kv_lv_iv_j\, d\nu_t & = \int_{\sphere_t^{n-1}}v_iv_jv_kv_l\delta_{k,i}\delta_{l,j}\, d\nu_t 
= \int_{\sphere_t^{n-1}}v_i^2v_j^2\, d\nu_t.
\end{align*}
Then, setting $\phi(\vb) = v_i v_j$, it follows that
\begin{align*}
\T_t(v_iv_j)  = \beta_t^2 \left(\0,  e_i \otimes e_j + e_j \otimes e_i\right), \text{ where }  \beta_t^2 = \int_{\sphere_t^{n-1}} v_1^2v_2^2 \, d\nu_t.
\end{align*}
\vskip 2mm
\noindent
\textbf{3.} Finally, for fixed $1\leq i\leq n$, let us consider $\phi(\vb) = v_i^2 - \frac{h^2(t)}{n}$. Then, we find
\begin{align*}
\T_t(\phi)  = \int_{\sphere_t^{n-1}}\left(\vb, \vb\otimes \vb - \frac{h^2(t)}{n}I_n\right)\left(v_i^2 - \frac{h^2(t)}{n}\right)\,d\nu_t.
\end{align*}
Again by the symmetricity of $\sphere_t^{n-1}$, see Remark \ref{Symmetricity}, for any $1\le k,l \le n$, we infer
\begin{align*}
\int_{\sphere_t^{n-1}}v_k\left(v_i^2 -\frac{h^2(t)}{n}\right)\, d\nu_t  = \int_{\sphere_t^{n-1}}(-v_k)\left(v_i^2 -\frac{h^2(t)}{n}\right)\, d\nu_t & = 0,\\
\int_{\sphere_t^{n-1}}v_kv_lv_i^2\, d\nu_t  = \int_{\sphere_t^{n-1}}(-v_k)v_lv_i^2\, d\nu_t = \int_{\sphere_t^{n-1}}v_k(-v_l)v_i^2\, d\nu_t & =0,
\end{align*}
and  $\displaystyle \int_{\sphere_t^{n-1}}v_kv_l\, d\nu_t  =0$  for all $k\ne l$.
Moreover, for any $1 \leq i,j \leq n$ with $i \ne j$, we have 
\begin{align*}
\frac{-1}{n-1}\int_{\sphere_t^{n-1}}\left(v_i^2 -\frac{h^2(t)}{n}\right)^2\, d\nu_t & =\int_{\sphere_t^{n-1}}\left(v_j^2 -\frac{h^2(t)}{n}\right)\left(v_i^2 -\frac{h^2(t)}{n}\right)\, d\nu_t .
\end{align*}
By using the above identities, we finally assert
\begin{equation*}
\T_t(\phi) =  \beta_t^3 \bigg(\0, e_i \otimes e_i - \frac{1}{(n-1)} \sum_{j\neq i} e_j \otimes e_j\bigg),\ \text{ where }\ \beta_t^3 =  \int_{\sphere_t^{n-1}} \left(v_i^2 -\frac{h^2(t)}{n}\right)^2 \, d\nu_t.
\end{equation*}
Therefore, the image set obtained in the above three cases, i.e., 
\[\Bigg\{\underbrace{(e_i,\coma{0})}_{spans\;  \R^n}, \underbrace{\left(\0,  e_i \otimes e_j + e_j \otimes e_i\right)}_{symmetricity}, \underbrace{\bigg(\0, e_i \otimes e_i - \frac{1}{(n-1)} \sum_{j\neq i} e_j \otimes e_j\bigg)}_{trace-zero} : 1\leq i,j \leq n \Bigg\}\]
forms a basis of $\R^n\times  M_0^n$ with the dimension $\displaystyle \left(n+\frac{n(n+1)}{2} -1\right)$.\\
Thus, for a.e. fixed $t\in[0,T]$, the image of $\T_t$ contains $\displaystyle \left(n+\frac{n(n+1)}{2} -1\right)$ linearly independent elements, which is a basis for $\R^n\times M_0^n$. Hence, $\T_t$ is surjective, that further implies  $\adda{0} \in \wtilde{\U}_t$.
\end{proof}

\appendix
\medskip\noindent
\textbf{Acknowledgments:} The authors would like to thank the Bernoulli Center at EPFL for its hospitality during the period when this work was discussed. 
A. Bawalia gratefully acknowledges the National Board of Higher Mathematics (NBHM) for travel support (Sanction No.:~0207/9(2)/2024-R$\&$D-II/10984), as well as the University Grants Commission (UGC), Government of India, for financial assistance (File No.:~368/2022/211610061684).
Z. Brze\'zniak and M.T. Mohan gratefully acknowledge the London Mathematical Society (Scheme 5, Grant Ref. 52426) for supporting M.T. Mohan’s visit to the University of York, during which part of this work was discussed. M.T. Mohan would  like to thank the Department of Science and Technology (DST) Science $\&$ Engineering Research Board (SERB), India for a MATRICS grant (MTR/2021/000066).  A. Bawalia would like to thank Dr.~K. Kinra, Prof.~K. Yamazaki, and Prof.~U. Koley  for helpful discussions.

\medskip\noindent	\textbf{Declarations:} 

\noindent 	\textbf{Ethical Approval:}   Not applicable 

\noindent  \textbf{Competing interests: } The authors declare no competing interests. 

\noindent  \textbf{Conflict of interest: }On behalf of all authors, the corresponding author states that there is no conflict of interest.

\noindent 	\textbf{Authors' contributions:} All authors have contributed equally. 


\noindent 	\textbf{Availability of data and materials:} Not applicable.

\bibliographystyle{plain}
\bibliography{Nonunique_ref}

\begin{thebibliography}{10}

\bibitem{LCB+EC+RS-25}
L.~C. Berselli, E.~Chiodaroli, and R.~Sannipoli.
\newblock Energy conservation for 3{D} {E}uler and {N}avier-{S}tokes equations
  in a bounded domain: applications to {B}eltrami flows.
\newblock {\em J. Nonlinear Sci.}, 35(1):Paper No. 10, 30, 2025.

\bibitem{DWB+SM+EST-24}
D.~W. Boutros, S.~Markfelder, and E.~S. Titi.
\newblock Nonuniqueness of generalised weak solutions to the primitive and
  {P}randtl equations.
\newblock {\em J. Nonlinear Sci.}, 34(4):Paper No. 68, 83, 2024.

\bibitem{DB+EF+MH-20}
D.~Breit, E.~Feireisl, and M.~Hofmanov\'{a}.
\newblock On solvability and ill-posedness of the compressible {E}uler system
  subject to stochastic forces.
\newblock {\em Anal. PDE}, 13(2):371--402, 2020.

\bibitem{PB+EW-25}
P.~Brkic and E.~Wiedemann.
\newblock Wild solutions of the three-dimensional axisymmetric {E}uler
  equations.
\newblock {\em SIAM J. Math. Anal.}, 57(1):996--1020, 2025.

\bibitem{CAB+CMFL+JHLN-15}
A.~C. Bronzi, M.~C. Lopes~Filho, and H.~J. Nussenzveig~Lopes.
\newblock Wild solutions for 2{D} incompressible ideal flow with passive
  tracer.
\newblock {\em Commun. Math. Sci.}, 13(5):1333--1343, 2015.

\bibitem{ZB+FF+MM-16}
Z.~Brze\'zniak, F.~Flandoli, and M.~Maurelli.
\newblock Existence and uniqueness for stochastic 2{D} {E}uler flows with
  bounded vorticity.
\newblock {\em Arch. Ration. Mech. Anal.}, 221(1):107--142, 2016.

\bibitem{ZB+UM+DM-19}
Z.~Brze\'zniak, U.~Manna, and D.~Mukherjee.
\newblock Wong-{Z}akai approximation for the stochastic
  {L}andau-{L}ifshitz-{G}ilbert equations.
\newblock {\em J. Differential Equations}, 267(2):776--825, 2019.

\bibitem{ZB+MM-20}
Z.~Brze\'zniak and M.~Maurelli.
\newblock Existence for stochastic 2{D} {E}uler equations with positive
  ${H}^{-1}$ vorticity.
\newblock {\em \href{https://doi.org/10.1007/s40072-026-00422-2}{To appear in
  Stoch. Partial Differ. Equ. Anal. Comput.}}, 2026.

\bibitem{ZB+PS-01}
Z.~Brze\'zniak and S.~Peszat.
\newblock Stochastic two dimensional {E}uler equations.
\newblock {\em Ann. Probab.}, 29(4):1796--1832, 2001.

\bibitem{TB-15}
T.~Buckmaster.
\newblock Onsager's conjecture almost everywhere in time.
\newblock {\em Comm. Math. Phys.}, 333(3):1175--1198, 2015.

\bibitem{TB+MC+VV-22}
T.~Buckmaster, M.~Colombo, and V.~Vicol.
\newblock Wild solutions of the {N}avier-{S}tokes equations whose singular sets
  in time have {H}ausdorff dimension strictly less than 1.
\newblock {\em J. Eur. Math. Soc. (JEMS)}, 24(9):3333--3378, 2022.

\bibitem{TB+CDL+PI+LSJ-15}
T.~Buckmaster, C.~De~Lellis, P.~Isett, and L.~Sz\'{e}kelyhidi, Jr.
\newblock Anomalous dissipation for {$1/5$}-{H}\"{o}lder {E}uler flows.
\newblock {\em Ann. of Math. (2)}, 182(1):127--172, 2015.

\bibitem{TB+CDL+LSJ-16}
T.~Buckmaster, C.~De~Lellis, and L.~Sz\'{e}kelyhidi, Jr.
\newblock Dissipative {E}uler flows with {O}nsager-critical spatial regularity.
\newblock {\em Comm. Pure Appl. Math.}, 69(9):1613--1670, 2016.

\bibitem{TB+CDL+LSJ+VV-19}
T.~Buckmaster, C.~De~Lellis, L.~Sz\'{e}kelyhidi, Jr., and V.~Vicol.
\newblock Onsager's conjecture for admissible weak solutions.
\newblock {\em Comm. Pure Appl. Math.}, 72(2):229--274, 2019.

\bibitem{TB+VV-19}
T.~Buckmaster and V.~Vicol.
\newblock Convex integration and phenomenologies in turbulence.
\newblock {\em EMS Surv. Math. Sci.}, 6(1-2):173--263, 2019.

\bibitem{TB+VV-19-NSE}
T.~Buckmaster and V.~Vicol.
\newblock Nonuniqueness of weak solutions to the {N}avier-{S}tokes equation.
\newblock {\em Ann. of Math. (2)}, 189(1):101--144, 2019.

\bibitem{MC+ZZ-25}
M.~Changxing and Z.~Zhiwen.
\newblock Nonuniqueness for high-dimensional ideal {M}{H}{D} equations via
  differential inclusion.
\newblock {\em \href{https://arxiv.org/pdf/2509.06866}{arXiv:2509.06866}},
  2025.

\bibitem{WC+ZD+XZ-24}
W.~Chen, Z.~Dong, and X.~Zhu.
\newblock Sharp nonuniqueness of solutions to stochastic {N}avier-{S}tokes
  equations.
\newblock {\em SIAM J. Math. Anal.}, 56(2):2248--2285, 2024.

\bibitem{AC+PC+SF+RS-08}
A.~Cheskidov, P.~Constantin, S.~Friedlander, and R.~Shvydkoy.
\newblock Energy conservation and {O}nsager's conjecture for the {E}uler
  equations.
\newblock {\em Nonlinearity}, 21(6):1233--1252, 2008.

\bibitem{AC+XL-22}
A.~Cheskidov and X.~Luo.
\newblock Sharp nonuniqueness for the {N}avier-{S}tokes equations.
\newblock {\em Invent. Math.}, 229(3):987--1054, 2022.

\bibitem{EC-14}
E.~Chiodaroli.
\newblock A counterexample to well-posedness of entropy solutions to the
  compressible {E}uler system.
\newblock {\em J. Hyperbolic Differ. Equ.}, 11(3):493--519, 2014.

\bibitem{EC+CDL+OK-15}
E.~Chiodaroli, C.~De~Lellis, and O.~Kreml.
\newblock Global ill-posedness of the isentropic system of gas dynamics.
\newblock {\em Comm. Pure Appl. Math.}, 68(7):1157--1190, 2015.

\bibitem{EC+EF+FF-21}
E.~Chiodaroli, E.~Feireisl, and F.~Flandoli.
\newblock Ill-posedness for the full {E}uler system driven by multiplicative
  white noise.
\newblock {\em Indiana Univ. Math. J.}, 70(4):1267--1282, 2021.

\bibitem{EC+OK-14}
E.~Chiodaroli and O.~Kreml.
\newblock On the energy dissipation rate of solutions to the compressible
  isentropic {E}uler system.
\newblock {\em Arch. Ration. Mech. Anal.}, 214(3):1019--1049, 2014.

\bibitem{AC+CDL+LSJ-12}
A.~Choffrut, C.~De~Lellis, and L.~Sz{\'e}kelyhidi~Jr.
\newblock Dissipative continuous {E}uler flows in two and three dimensions.
\newblock {\em \href{https://arxiv.org/pdf/1205.1226v1}{arXiv:1205.1226v1}},
  2012.

\bibitem{AC+LSJ-14}
A.~Choffrut and L.~Sz\'{e}kelyhidi, Jr.
\newblock Weak solutions to the stationary incompressible {E}uler equations.
\newblock {\em SIAM J. Math. Anal.}, 46(6):4060--4074, 2014.

\bibitem{MC+CDL+LDR-18}
M.~Colombo, C.~De~Lellis, and L.~De~Rosa.
\newblock Ill-posedness of {L}eray solutions for the hypodissipative
  {N}avier-{S}tokes equations.
\newblock {\em Comm. Math. Phys.}, 362(2):659--688, 2018.

\bibitem{MC+LDR+MS-22}
M.~Colombo, L.~De~Rosa, and M.~Sorella.
\newblock Typicality results for weak solutions of the incompressible
  {N}avier-{S}tokes equations.
\newblock {\em ESAIM Control Optim. Calc. Var.}, 28:Paper No. 38, 24, 2022.

\bibitem{PC+WE+ET-91}
P.~Constantin, W.~E, and E.~S. Titi.
\newblock Onsager's conjecture on the energy conservation for solutions of
  {E}uler's equation.
\newblock {\em Comm. Math. Phys.}, 165(1):207--209, 1994.

\bibitem{PC+AM-88}
P.~Constantin and A.~Majda.
\newblock The {B}eltrami spectrum for incompressible fluid flows.
\newblock {\em Comm. Math. Phys.}, 115(3):435--456, 1988.

\bibitem{DC+DF+FG-11}
D.~Cordoba, D.~Faraco, and F.~Gancedo.
\newblock Lack of uniqueness for weak solutions of the incompressible porous
  media equation.
\newblock {\em Arch. Ration. Mech. Anal.}, 200(3):725--746, 2011.

\bibitem{SD-14}
S.~Daneri.
\newblock Cauchy problem for dissipative {H}\"{o}lder solutions to the
  incompressible {E}uler equations.
\newblock {\em Comm. Math. Phys.}, 329(2):745--786, 2014.

\bibitem{SD+LSJ-17}
S.~Daneri and L.~Sz\'{e}kelyhidi, Jr.
\newblock Non-uniqueness and h-principle for {H}\"{o}lder-continuous weak
  solutions of the {E}uler equations.
\newblock {\em Arch. Ration. Mech. Anal.}, 224(2):471--514, 2017.

\bibitem{CDL-08}
C.~De~Lellis.
\newblock Ill-posedness for bounded admissible solutions of the 2-dimensional
  {$p$}-system.
\newblock In {\em Hyperbolic problems: theory, numerics and applications},
  volume~67, pages 269--278. Amer. Math. Soc., Providence, RI, 2009.

\bibitem{CDL+EC+OK-14}
C.~De~Lellis, E.~Chiodaroli, and O.~Kreml.
\newblock Surprising solutions to the isentropic {E}uler system of gas
  dynamics.
\newblock In {\em Hyperbolic problems: theory, numerics, applications},
  volume~8, pages 1--10. Am. Inst. Math. Sci. (AIMS), Springfield, MO, 2014.

\bibitem{CDL+LSJ-09}
C.~De~Lellis and L.~Sz\'{e}kelyhidi, Jr.
\newblock The {E}uler equations as a differential inclusion.
\newblock {\em Ann. of Math. (2)}, 170(3):1417--1436, 2009.

\bibitem{CDL+LSJ-10}
C.~De~Lellis and L.~Sz\'{e}kelyhidi, Jr.
\newblock On admissibility criteria for weak solutions of the {E}uler
  equations.
\newblock {\em Arch. Ration. Mech. Anal.}, 195(1):225--260, 2010.

\bibitem{CDL+LSJ-13}
C.~De~Lellis and L.~Sz\'{e}kelyhidi, Jr.
\newblock Dissipative continuous {E}uler flows.
\newblock {\em Invent. Math.}, 193(2):377--407, 2013.

\bibitem{CDL+LSJ-14}
C.~De~Lellis and L.~Sz\'{e}kelyhidi, Jr.
\newblock Dissipative {E}uler flows and {O}nsager's conjecture.
\newblock {\em J. Eur. Math. Soc. (JEMS)}, 16(7):1467--1505, 2014.

\bibitem{HD-77}
H.~Doss.
\newblock Liens entre \'equations diff\'erentielles stochastiques et
  ordinaires.
\newblock {\em Ann. Inst. H. Poincar\'e{} Sect. B (N.S.)}, 13(2):99--125, 1977.

\bibitem{LE-57}
L.~Euler.
\newblock {P}rincipes g\'en\'eraux du mouvement des fluides.
\newblock 11:274--315, 1757.

\bibitem{GLE-94}
G.~L. Eyink.
\newblock Energy dissipation without viscosity in ideal hydrodynamics. {I}.
  {F}ourier analysis and local energy transfer.
\newblock {\em Phys. D}, 78(3-4):222--240, 1994.

\bibitem{CF-06}
C.~L. Fefferman.
\newblock Existence and smoothness of the {N}avier-{S}tokes equation.
\newblock In {\em The millennium prize problems}, pages 57--67. Clay Math.
  Inst., Cambridge, MA, 2006.

\bibitem{EF-14}
E.~Feireisl.
\newblock Maximal dissipation and well-posedness for the compressible {E}uler
  system.
\newblock {\em J. Math. Fluid Mech.}, 16(1):447--461, 2014.

\bibitem{FF+MG+EP-10}
F.~Flandoli, M.~Gubinelli, and E.~Priola.
\newblock Well-posedness of the transport equation by stochastic perturbation.
\newblock {\em Invent. Math.}, 180(1):1--53, 2010.

\bibitem{VG+HK+MN-24}
V.~Giri, H.~Kwon, and M.~Novack.
\newblock A wavelet-inspired {$L^3$}-based convex integration framework for the
  {E}uler equations.
\newblock {\em Ann. PDE}, 10(2):Paper No. 19, 271, 2024.

\bibitem{VG+ROR-24}
V.~Giri and R.~O. Radu.
\newblock The {O}nsager conjecture in 2{D}: a {N}ewton-{N}ash iteration.
\newblock {\em Invent. Math.}, 238(2):691--768, 2024.

\bibitem{HG+EN+CVV-14}
N.~E. Glatt-Holtz and V.~C. Vicol.
\newblock Local and global existence of smooth solutions for the stochastic
  {E}uler equations with multiplicative noise.
\newblock {\em Ann. Probab.}, 42(1):80--145, 2014.

\bibitem{MH+TL+UP-24}
M.~Hofmanov\'{a}, T.~Lange, and U.~Pappalettera.
\newblock Global existence and non-uniqueness of 3{D} {E}uler equations
  perturbed by transport noise.
\newblock {\em Probab. Theory Related Fields}, 188(3-4):1183--1255, 2024.

\bibitem{MH+RZ+XZ-22}
M.~Hofmanov\'{a}, R.~Zhu, and X.~Zhu.
\newblock On ill- and well-posedness of dissipative martingale solutions to
  stochastic 3{D} {E}uler equations.
\newblock {\em Comm. Pure Appl. Math.}, 75(11):2446--2510, 2022.

\bibitem{MH+RZ+XZ-23}
M.~Hofmanov\'{a}, R.~Zhu, and X.~Zhu.
\newblock Global existence and non-uniqueness for 3{D} {N}avier-{S}tokes
  equations with space-time white noise.
\newblock {\em Arch. Ration. Mech. Anal.}, 247(3):Paper No. 46, 70, 2023.

\bibitem{MH+RZ+XZ-23-trace-class}
M.~Hofmanov\'{a}, R.~Zhu, and X.~Zhu.
\newblock Global-in-time probabilistically strong and {M}arkov solutions to
  stochastic 3{D} {N}avier-{S}tokes equations: existence and nonuniqueness.
\newblock {\em Ann. Probab.}, 51(2):524--579, 2023.

\bibitem{MH+RZ+XZ-24}
M.~Hofmanov\'{a}, R.~Zhu, and X.~Zhu.
\newblock Nonuniqueness in law of stochastic 3{D} {N}avier-{S}tokes equations.
\newblock {\em J. Eur. Math. Soc. (JEMS)}, 26(1):163--260, 2024.

\bibitem{MH+RZ+XZ-25}
M.~Hofmanov\'a, R.~Zhu, and X.~Zhu.
\newblock Non-unique ergodicity for deterministic and stochastic 3{D}
  {N}avier-{S}tokes and {E}uler equations.
\newblock {\em Arch. Ration. Mech. Anal.}, 249(3):Paper No. 33, 2025.

\bibitem{PI-12}
P.~Isett.
\newblock {\em H\"{o}lder continuous {E}uler flows with compact support in
  time}.
\newblock ProQuest LLC, Ann Arbor, MI, 2013.

\bibitem{PI-18}
P.~Isett.
\newblock A proof of {O}nsager's conjecture.
\newblock {\em Ann. of Math. (2)}, 188(3):871--963, 2018.

\bibitem{PS-24}
P.~Isett.
\newblock On the endpoint regularity in {O}nsager's conjecture.
\newblock {\em Anal. PDE}, 17(6):2123--2159, 2024.

\bibitem{PI+SJO-16}
P.~Isett and S.~J. Oh.
\newblock A heat flow approach to {O}nsager's conjecture for the {E}uler
  equations on manifolds.
\newblock {\em Trans. Amer. Math. Soc.}, 368(9):6519--6537, 2016.

\bibitem{PI+VV-15}
P.~Isett and V.~Vicol.
\newblock H\"{o}lder continuous solutions of active scalar equations.
\newblock {\em Ann. PDE}, 1(1):Art. 2, 77, 2015.

\bibitem{UK+KY-25}
U.~Koley and K.~Yamazaki.
\newblock Non-uniqueness in law of transport-diffusion equation forced by
  random noise.
\newblock {\em J. Differential Equations}, 416:82--142, 2025.

\bibitem{NHK-55}
N.~H. Kuiper.
\newblock On {$C^1$}-isometric imbeddings. {I}, {II}.
\newblock {\em Indag. Math.}, 17:545--556, 683--689, 1955.

\bibitem{HK-90}
H.~Kunita.
\newblock {\em Stochastic flows and stochastic differential equations},
  volume~24.
\newblock Cambridge University Press, Cambridge, 1990.

\bibitem{SL-88}
S.~\L{}ojasiewicz.
\newblock {\em An introduction to the theory of real functions}.
\newblock John Wiley \& Sons, Ltd., Chichester, third edition, 1988.

\bibitem{HL+XZ-25}
H.~L\"u and X.~Zhu.
\newblock Non-unique ergodicity for the 2{D} stochastic {N}avier-{S}tokes
  equations with derivative of space-time white noise.
\newblock {\em J. Differential Equations}, 425:383--433, 2025.

\bibitem{HL+XZ-24}
H.~L\"u and X.~Zhu.
\newblock Sharp nonuniqueness of solutions to 2{D} {N}avier-{S}tokes equations
  with space-time white noise.
\newblock {\em Ann. Appl. Probab.}, 35(3):1980--2030, 2025.

\bibitem{Lu+Lu+Zhu-25+}
H.~Lü, L.~Lü, and R.~Zhu.
\newblock A proof of {O}nsager's conjecture for the stochastic 3{D} {E}uler
  equations.
\newblock {\em \href{https://arxiv.org/abs/2505.06915}{arXiv:2505.06915}},
  2025.

\bibitem{SM+PQ-24}
S.~Mao and P.~Qu.
\newblock Non-uniqueness for the compressible {E}uler-{M}axwell equations.
\newblock {\em Calc. Var. Partial Differential Equations}, 63(7):Paper No. 186,
  84, 2024.

\bibitem{FM-24}
F.~Mengual.
\newblock Non-uniqueness of admissible solutions for the 2{D} {E}uler equation
  with {$L^p$} vortex data.
\newblock {\em Comm. Math. Phys.}, 405(9):Paper No. 207, 28, 2024.

\bibitem{SM+LSJ-18}
S.~Modena and L.~Sz\'ekelyhidi, Jr.
\newblock Non-uniqueness for the transport equation with {S}obolev vector
  fields.
\newblock {\em Ann. PDE}, 4(2):Paper No. 18, 38, 2018.

\bibitem{JN-54}
J.~Nash.
\newblock {$C^1$} isometric imbeddings.
\newblock {\em Ann. of Math. (2)}, 60:383--396, 1954.

\bibitem{LO-49}
L.~Onsager.
\newblock Statistical hydodynamics.
\newblock {\em Convegno Internazionale di Meccanica Statistica}, 2(6):279--287,
  1949.

\bibitem{JCO-80}
J.~C. Oxtoby.
\newblock {\em Measure and category}, volume~2.
\newblock Springer-Verlag, New York-Berlin, second edition, 1980.

\bibitem{RTR-70}
R.~T. Rockafellar.
\newblock {\em Convex analysis}.
\newblock Princeton University Press, Princeton, NJ, 1970.

\bibitem{WR-73}
W.~Rudin.
\newblock {\em Functional analysis}.
\newblock McGraw-Hill Book Co., New York-D\"usseldorf-Johannesburg, 1973.

\bibitem{VS-93}
V.~Scheffer.
\newblock An inviscid flow with compact support in space-time.
\newblock {\em J. Geom. Anal.}, 3(4):343--401, 1993.

\bibitem{AS-97}
A.~Shnirelman.
\newblock On the nonuniqueness of weak solution of the {E}uler equation.
\newblock {\em Comm. Pure Appl. Math.}, 50(12):1261--1286, 1997.

\bibitem{RS-11}
R.~Shvydkoy.
\newblock Convex integration for a class of active scalar equations.
\newblock {\em J. Amer. Math. Soc.}, 24(4):1159--1174, 2011.

\bibitem{JHS-78}
H.~J. Sussmann.
\newblock On the gap between deterministic and stochastic ordinary differential
  equations.
\newblock {\em Ann. Probability}, 6(1):19--41, 1978.

\bibitem{LSJ-11}
L.~Sz\'{e}kelyhidi.
\newblock Weak solutions to the incompressible {E}uler equations with vortex
  sheet initial data.
\newblock {\em C. R. Math. Acad. Sci. Paris}, 349(19-20):1063--1066, 2011.

\bibitem{LSJ-12}
L.~Sz\'{e}kelyhidi, Jr.
\newblock Relaxation of the incompressible porous media equation.
\newblock {\em Ann. Sci. \'{E}c. Norm. Sup\'{e}r. (4)}, 45(3):491--509, 2012.

\bibitem{LSJ-13}
L.~Sz\'{e}kelyhidi, Jr.
\newblock From isometric embeddings to turbulence.
\newblock In {\em H{CDTE} lecture notes. {P}art {II}. {N}onlinear hyperbolic
  {PDE}s, dispersive and transport equations}, volume~7, page~63. Am. Inst.
  Math. Sci. (AIMS), Springfield, MO, 2013.

\bibitem{LT-79}
L.~Tartar.
\newblock Compensated compactness and applications to partial differential
  equations.
\newblock In {\em Nonlinear analysis and mechanics: {H}eriot-{W}att
  {S}ymposium, {V}ol. {IV}}, volume~39, pages 136--212. Pitman, Boston,
  Mass.-London, 1979.

\bibitem{RT-01}
R.~Temam.
\newblock {\em Navier-{S}tokes equations}.
\newblock AMS Chelsea Publishing, Providence, RI, 2001.

\bibitem{EW-11}
E.~Wiedemann.
\newblock Existence of weak solutions for the incompressible {E}uler equations.
\newblock {\em Ann. Inst. H. Poincar\'{e} C Anal. Non Lin\'{e}aire},
  28(5):727--730, 2011.

\bibitem{KY-22}
K.~Yamazaki.
\newblock Nonuniqueness in law for two-dimensional {N}avier--{S}tokes equations
  with diffusion weaker than a full {L}aplacian.
\newblock {\em SIAM Journal on Mathematical Analysis}, 54(4):3997--4042, 2022.

\bibitem{KY-22-remarks}
K.~Yamazaki.
\newblock Remarks on the non-uniqueness in law of the {N}avier–{S}tokes
  equations up to the {J}.-{L}. {L}ions’ exponent.
\newblock {\em Stochastic Processes and their Applications}, 147(4):226--269,
  2022.

\bibitem{KY-24}
K.~Yamazaki.
\newblock Non-uniqueness in law of three-dimensional {N}avier-{S}tokes
  equations diffused via a fractional {L}aplacian with power less than one
  half.
\newblock {\em Stoch. Partial Differ. Equ. Anal. Comput.}, 12(1):794--855,
  2024.

\bibitem{KY-26}
K.~Yamazaki.
\newblock Remarks on the two-dimensional magnetohydrodynamics system forced by
  space-time white noise.
\newblock {\em Stochastic Process. Appl.}, 195:Paper No. 104893, 38, 2026.

\bibitem{WPZ-12}
W.~P. Ziemer.
\newblock {\em Weakly differentiable functions: Sobolev spaces and functions of
  bounded variation}, volume 120.
\newblock Springer Science \& Business Media, 2012.

\end{thebibliography}

\end{document}